\documentclass[11pt,a4paper]{article}
\usepackage{amssymb,latexsym}
\usepackage{bm, amsmath, amssymb, amsthm}

\topmargin=-18 true mm
\oddsidemargin=-2 true mm
\evensidemargin=-2 true mm
\setlength{\textheight}{252 true mm}  
\setlength{\textwidth}{158 true mm}

\def\<{\langle}
\def\>{\rangle}
\def\eps{\varepsilon}
\def\RR{\mathbb{R}}

\def\calf{\mathcal{F}}
\def\tr{\operatorname{Tr\,}}
\def\id{\operatorname{id\,}}
\def\Div{\operatorname{div}}

\def\Ric{\operatorname{Ric}}
\def\vol{\operatorname{vol}}
\newcommand{\Sm}{\,{\mbox{\rm S}}}
\newcommand\const{\operatorname{const}}
\def\eq{\hspace*{-1.5mm}&=&\hspace*{-1.5mm}}

\def\minus{\hspace*{-1.5mm}&-&\hspace*{-1.5mm}}
\def\dt{\partial_t}

\newtheorem{thm}{Theorem}[section]
\newtheorem{cor}{Corollary}[section]
\newtheorem{lem}{Lemma}[section]
\newtheorem{prop}{Proposition}[section]
\newtheorem{example}{Example}[section]
\newtheorem{defn}{Definition}[section]
\newtheorem{rem}{Remark}[section]

\title{Variations of the total mixed scalar curvature of a distribution}

\author{
       Vladimir Rovenski\footnote{Mathematical Department, University of Haifa, Mount Carmel, Haifa, 31905, Israel
       \newline e-mail: {\tt vrovenski@univ.haifa.ac.il}}\ \
        \ and \
       Tomasz Zawadzki
       \footnotemark[1]
       \footnote{Faculty of Mathematics and Computer Science,
        University of \L\'{o}dz, ul. 22 Banach Str., 90-238  \L\'{o}d\'{z}, Poland
       \newline e-mail: {\tt zawadzki@math.uni.lodz.pl}}
       }

\begin{document}

\date{}

\maketitle

\begin{abstract}
We examine the total mixed scalar curvature of a fixed distribution as a functional of a pseudo-Riemannian metric. We develop variational formulas for quantities of extrinsic geometry of the distribution to find the critical points of this action. Together with the arbitrary variations of the metric, we consider also variations that preserve the volume of the manifold or partially preserve the metric (e.g. on the distribution). For each of those cases we obtain the Euler-Lagrange equation and its several solutions. The examples of critical metrics that we find are related to various fields of geometry such as contact and 3-Sasakian manifolds, geodesic Riemannian flows and codimension-one foliations of interesting geometric properties (e.g. totally umbilical, minimal).
\end{abstract}

\vskip 4mm\noindent
\textbf{Keywords}:
Pseudo-Riemannian metric,
distribution,
foliation, variation,
mixed scalar curvature,
second fundamental form,
contact structure

\vskip1mm\noindent
\textbf{MSC (2010)} {\small Primary 53C12; Secondary 53C44.}

\tableofcontents

\section*{Introduction}\label{sec:intro}

Distributions on manifolds appear in various situations - e.g. as fields of tangent planes of foliations or kernels of differential forms. When the metric of a pseudo-Riemannian manifold is non-degenerate on a distribution, it defines a pseudo-Riemannian almost-product structure - i.e., a pair of orthogonal, complementary distributions that span the tangent bundle \cite{g1967}. The mixed scalar curvature is one of the simplest curvature invariants of a pseudo-Riemannian almost-product structure. It is defined as a sum of sectional curvatures of planes that non-trivially intersect with both of the distributions (we give the exact definition and formula \eqref{eq-wal2} later). Its investigation led to multiple results regarding the existence of vector fields \cite{britowalczak}, foliations \cite{wa1} and submersions \cite{Zawadzki} of interesting geometric properties.

On a manifold with a distribution, we can define the total mixed scalar curvature as a functional on the space of all pseudo-Riemannian metrics that are non-degenerate on the distribution. Every such metric yields a pseudo-Riemannian almost-product structure and hence the mixed scalar curvature of our distribution (and of its metric-dependent orthogonal complement). Since we consider also non-compact manifolds, we assume that the total mixed scalar curvature is in fact an integral of the mixed scalar curvature over a sufficiently large, relatively compact set. When viewed as a functional of the metric, the total mixed scalar curvature may be considered an analogue of the Einstein-Hilbert action \cite{bdrs}.

The goal of this paper is to examine metrics critical for the total mixed scalar curvature with respect to different kinds of variations of metric. Apart from varying among all the metrics that are non-degenerate on the distribution, we shall also restrict to the case when the varying metric remains fixed on the distribution, and the ``complementary" case when metric varies only on the distribution -- preserving its orthogonal complement and the metric on it. Moreover, in analogy to the Einstein-Hilbert action, all variations will be considered in two versions: with and without the additional requirement that the metrics preserve the volume of the set over which we integrate \cite{besse}. Surprisingly, all the formulas that we obtain can be presented in a compact way, where the type of variation directly translates only to the values, or the form, of two terms. 

The first part of the paper starts with all necessary definitions, continues to development of variation formulas for several geometric quantities and culminates in the formulation of the Euler-Lagrange equation for the total mixed scalar curvature. The equation we obtain is difficult to solve in full generality, although it can be related to some curvatures previously described in literature \cite{r2010}. It can be decomposed into three independent parts, two of them being symmetric with respect to interchanging the given distribution and its orthogonal complement. Those two equations are also the same as the ones obtained for more restrictive, adapted variations considered in \cite{rz-1}. It is worth noting that the variation formulas for geometric quantities, that we obtain along the way to the Euler-Lagrange equation, can be of use also for many other functionals.

The second part of the paper is dedicated to examples of metrics critical for the total mixed scalar curvature. Our functional explicitly depends on the domain $\Omega$ of the integration of the mixed scalar curvature, but there exist metrics that are critical for all relatively compact $\Omega$ -- and these are of our primary interest. In Section \ref{sectionFlows} we consider the case when the fixed distribution is one-dimensional, i.e. tangent to the flowlines of a unit vector field (in 4-dimensional, general relativity setting this case corresponds to the one examined in \cite{bdrs}). We rephrase the Euler-Lagrange equation, and examine it in the case of geodesic Riemannian flows, comparing the results for different types of variations. Then, in Section \ref{SectionContact}, we consider the distribution spanned by the Reeb field on a contact manifold, which allows us to give an interpretation of some geometric quantities that appear in the Euler-Lagrange equation. In particular, we show that the metrics of $K$-contact structures are critical with respect to all variations that fix the volume and partially preserve the metric (either on the distribution or everywhere else except it), thus generalizing a result from \cite{b2010}. As a different application of the variation formulas obtained earlier, we also examine a measure of non-integrability of the orthogonal complement of the distribution, showing that for Reeb field's flowlines contact metric structures yield critical metrics. The results we obtain are then generalized, in Section \ref{section3Sasakian}, to the setting of 3-Sasakian manifolds. Finally, in Section \ref{sectionFoliations}, we consider situation ``dual" to the one from Section \ref{sectionFlows} - fixing a distribution tangent to a codimension-one foliation. Then, with the assumption of a special coordinate system, the Euler-Lagrange equations can be in some cases explicitly solved.  

\section{Main results}\label{sec:mixed-action}

\subsection{Preliminaries}\label{subsec:mixedEH}

This section recalls necessary definitions of some functions and tensors, see \cite{rz-1}, and introduces several new notions related to geometry of pseudo-Riemannian almost-product manifolds.

Let ${\rm Sym}^2(M)$ be the space of all symmetric $(0,2)$-tensors tangent to~$M$.
A \textit{pseudo-Riemannian metric} of index $q$ on $M$ is an element $g\in{\rm Sym}^2(M)$
such that each $g_x\ (x\in M)$ is a {non-degenerate bilinear form of index} $q$ on the tangent space $T_xM$.
For~$q=0$ (i.e., $g_x$ is positive definite) $g$ is a Riemannian metric and for $q=1$ it is called a Lorentz metric.
Let~${\rm Riem}(M)\subset{\rm Sym}^2(M)$ be the subspace of pseudo-Riemannian metrics of given signature.

Let $R(X,Y)=\nabla_Y\nabla_X-\nabla_X\nabla_Y+\nabla_{[X,Y]}$ be the~curvature tensor of the Levi-Civita connection $\nabla$ of $g$.
At~a point $x\in M$, a 2-dimensional linear subspace $X\wedge Y$ (called a plane section) of $T_xM$
is \textit{non-degenerate} if $W(X,Y):=g(X,X)\,g(Y,Y)-g(X,Y)\,g(X,Y)\ne0$.
For such section at $x$, the {sectional curvature} is the number $K(X\wedge Y)=g(R(X,Y)X, Y)/W(X,Y)$.

A subbundle $\widetilde{\cal D}\subset TM$ (called a distribution) is \textit{non-degenerate},
if $g_x$ is non-degenerate on $\widetilde{\cal D}_x\subset T_x M$ for every $x\in M$;
in this case, the orthogonal complement of~$\widetilde{\cal D}$, which will be denoted by ${\cal D}$, is also non-degenerate \cite{O'Neill} and we have $\widetilde{\cal D}_x\cap\,{\cal D}_x=0$, $\widetilde{\cal D}_x\oplus\,{\cal D}_x=T_xM$ for all $x \in M$. A connected manifold $M^{n+p}$ with a pseudo-Riemannian metric $g$ and a pair
of complementary orthogonal non-degene\-rate distributions $\widetilde{\cal D}$ and ${\cal D}$ of ranks
$\dim\widetilde{\cal D}_x=n$ and $\dim{\cal D}_x=p$ for every $x\in M$ is called a \textit{pseudo-Riemannian almost-product structure} on $M$, see \cite{g1967}. Such $(M, \widetilde{\cal D}, {\cal D}, g)$ is also sometimes called a \textit{pseudo-Riemannian almost-product manifold}.

Let~$\mathfrak{X}_M$ (resp. $\mathfrak{X}_{\cal D}$ and $\mathfrak{X}_{\widetilde{\cal D}}$)
be the module over $C^\infty(M)$ of all vector fields on $M$ (resp. on ${\cal D}$ and~$\widetilde{\cal D}$).
The~following convention is adopted for the range of~indices:
\[
 a,b,c\ldots{\in}\{1\ldots n\},\ i,j,k\ldots{\in}\{1\ldots p\}.
\]
The ``musical" isomorphisms $\sharp$ and $\flat$ will be used for rank one and symmetric rank 2 tensors.
For~example, if $\omega \in T^1_0 M$ is a 1-form and $X,Y\in {\mathfrak X}_M$ then
$\omega(Y)=g(\omega^\sharp,Y)$ and $X^\flat(Y) =g(X,Y)$.
For $(0,2)$-tensors $A$ and $B$ we have
 $\<A, B\> =\tr_g(A^\sharp B^\sharp)=\<A^\sharp, B^\sharp\>$.

The~sectional curvature $K(X\wedge Y)$ is called \textit{mixed} if $X\in\widetilde{\cal D}$ and $Y\in{\cal D}$.
Let $\{E_a,\,{\cal E}_{i}\}$ be a local orthonormal frame \textit{adapted} to $(\widetilde{\cal D},\,{\cal D})$, i.e.
\begin{eqnarray*}
&& E_a \in \widetilde{\cal D}, \quad a = 1, \ldots, n , \\
&& {\cal E}_{i} \in {\cal D}, \quad i=1 , \ldots , p
\end{eqnarray*}
and let $\epsilon_i=g({\cal E}_{i},{\cal E}_{i}),\ \epsilon_a=g(E_a,E_a) $. We have $| \epsilon_i | = | \epsilon_a | = 1$ and $W(E_a, {\cal E}_{i})=\epsilon_a \epsilon_i\ne0$. The function on $M$,
\begin{equation}\label{eq-wal2}
 \Sm_{\rm mix} =\sum\nolimits_{\,a,i} K(E_a\wedge {\cal E}_{i})
 =\sum\nolimits_{\,a,i}\epsilon_a \epsilon_i\,g(R(E_a, {\cal E}_{i})E_a,\, {\cal E}_{i})\,
\end{equation}
is called the \textit{mixed scalar curvature}, see~\cite{wa1}, and does not depend on the choice of the adapted orthonormal frame.
 If~a distribution is spanned by a unit vector field $N$, i.e., $g(N,N)=\epsilon_N\in\{-1,1\}$,
 then $\Sm_{\rm mix} = \epsilon_N\Ric_{N}$, where $\Ric_{N}$ is the Ricci curvature in the $N$-direction.

To compute $\Sm_{\rm mix}$ on $(M,g)$ we only need to fix one of the distributions, say $\widetilde{\cal D}$,
then we obtain the second distribution as its $g$-orthogonal complement and the function \eqref{eq-wal2} is well defined. Given a pair $(M,\widetilde{\cal D})$ of a manifold and a distribution, we shall study pseudo-Riemannian structures non-degenerate on $\widetilde{\cal D}$ and critical for the functional
\begin{equation}\label{E-Jmix}
 J_{\rm mix,\widetilde{\cal D},\Omega} :\ g \mapsto \int_{\Omega} \Sm_{\rm mix}(g)\, {\rm d} \vol_g ,
\end{equation}
where $\Omega$ in (\ref{E-Jmix}) is a relatively compact domain of $M$ (and $\Omega=M$ when $M$ is closed), containing supports of variations of the metric.

The Euler-Lagrange equation for \eqref{E-Jmix}, that we shall obtain later, is expressed in terms of extrinsic geometry of the distribution $\widetilde{\cal D}$ and its orthogonal complement ${\cal D}$. In order to understand it, we shall define several notions on a pseudo-Riemannian almost-product manifold $(M, \widetilde{\cal D}, {\cal D}, g)$.

For every $X\in\mathfrak{X}_M$ we have $X=\widetilde{X} + X^\perp$, where $\widetilde{X} \equiv X^\top$ is the $\widetilde{\cal D}$-component of $X$  (resp. $X^\perp$ the  ${\cal D}$-component of $X$) with respect to $g$. We define $g^\perp$ and $g^\top$ by
\[
g^\perp (X,Y) = g( X^\perp , Y^\perp ) ,\quad g^\top (X,Y) = g(X^\top , Y^\top) , \qquad (X,Y \in \mathfrak{X}_M ).
\]
The 
symmetric $(0,2)$-tensor ${r}_{\,\cal D}$, given~by
\begin{equation}\label{E-Rictop2}
 {r}_{{\cal D}}(X,Y) = \sum\nolimits_{a} \epsilon_a\,g(R(E_a, \, X^\perp)E_a, \, Y^\perp), \qquad ( X,Y\in \mathfrak{X}_M),
\end{equation}
is referred to as the \textit{partial Ricci tensor} adapted for $\cal D$, see \cite{rz-1}.
In particular, by~\eqref{eq-wal2},
\begin{equation}\label{E-Ric-Sm}
 \tr_{g}{r}_{\cal D}=\Sm_{\rm mix}(g).
\end{equation}
Note that the {partial Ricci curvature} $r_{\cal D}(X,X)$ in the direction of
a unit vector $X\in{\cal D}$ is the sum of sectional curvatures over all mixed planes containing $X$.

Let~$T, h:\widetilde{\cal D}\times \widetilde{\cal D}\to{\cal D}$
be the integrability tensor and the second fundamental form of $\widetilde{\cal D}$,
\begin{eqnarray*}
  T(X,Y)=(1/2)\,[X,\,Y]^\perp,\quad h(X,Y) \eq (1/2)\,(\nabla_X Y+\nabla_Y X)^\perp \quad (X, Y \in {\widetilde{\cal D}}) , \\
 \tilde T(X,Y)=(1/2)\,[X,\,Y]^\top,\quad \tilde h(X,Y) \eq (1/2)\,(\nabla_X Y+\nabla_Y X)^\top \quad (X,Y \in {\cal D}).
\end{eqnarray*}
Using an orthonormal adapted frame,
one may find the formulae
\begin{eqnarray*}
 \<\tilde h,\tilde h\>
 =\!\sum\nolimits_{\,i,j}\epsilon_i\epsilon_j\,g(\tilde h({\cal E}_i,{\cal E}_j),\tilde h({\cal E}_i,{\cal E}_j)),\quad
 \<h,h\>=\!\sum\nolimits_{\,a,b}\epsilon_a\epsilon_b\,g(h({E}_a,{E}_b),h({E}_a,{E}_b)), \\ 
 \<\tilde T,\tilde T\>=\!\sum\nolimits_{\,i,j}\epsilon_i\epsilon_j\,
 g(\tilde T({\cal E}_i,{\cal E}_j),\tilde T({\cal E}_i,{\cal E}_j)),\quad
 \<T,T\>=\!\sum\nolimits_{\,a,b}\epsilon_a\epsilon_b\,g(T({E}_a,{E}_b),T({E}_a,{E}_b)).
\end{eqnarray*}
The mean curvature vector fields of $\widetilde{\cal D}$
and ${\cal D}$ are, respectively,
\[
 H=\tr_{g} h=\sum\nolimits_a\epsilon_a h(E_a,E_a)
,\quad
 \tilde H=\tr_{g}\tilde h=\sum\nolimits_i\epsilon_i \tilde h({\cal E}_i,{\cal E}_i).
\]
 A~distribution $\widetilde{\cal D}$
is called \textit{totally umbilical}, \textit{harmonic}, or \textit{totally geodesic}, if
 $h=\frac1nH\,{g^\top},\ H =0$, or $h=0$, respectively.


The Weingarten operator $A_Z$ of $\widetilde{\cal D}$ with respect to $Z\in{\cal D}$,
and the operator $T^\sharp_Z$ are defined~by
\[
 g(A_Z(X),Y)= g(h(X,Y),Z),\quad g(T^\sharp_Z(X),Y)=g(T(X,Y),Z) , \qquad (X,Y \in \widetilde{\cal D}) .
\]
Similarly, we define for $N \in \widetilde{\cal D}$
\[
g( {\tilde A}_N (X), Y ) = g({\tilde h}(X,Y),N),\quad g({\tilde T}^\sharp_N(X),Y)=g({\tilde T}(X,Y),N) , \qquad (X,Y \in {\cal D}).
\]
For the local orthonormal frame $\{E_i,\,{\cal E}_a\}_{i\le p,\,a\le n}$
(adapted to the distributions)
we use the following convention for various $(1,1)$-tensors:
 ${\tilde T}^{\sharp}_a := {\tilde T}^{\sharp}_{ E_a },\ A_i := A_{ {\cal E}_i }$, etc.

The Divergence Theorem states that $\int_{M} (\Div\xi)\,{\rm d}\vol_g =0$, when $M$ is closed;
this is also true if $M$ is open and $\xi\in\mathfrak{X}_M$ is supported in a relatively compact domain
$\Omega\subset M$.
The~${\cal D}^\bot$-\textit{divergence} of $\xi$ is defined by
$\Div^\perp \xi=\sum\nolimits_{i} \epsilon_i\,g(\nabla_{{\cal E}_i}\,\xi, {\cal E}_i)$ , similarly $\widetilde{\Div} \, \xi=\sum\nolimits_{a} \epsilon_a\,g(\nabla_{ E_a }\,\xi, E_a )$.
 Thus, the divergence of $\xi$ is
\[
 \Div\xi=\tr(\nabla \xi) = \Div^\perp\xi + \widetilde{\Div}\,\xi.
\]
Recall that for a vector field $X\in\mathfrak{X}_{\cal D}$ we have
\begin{eqnarray}\label{E-divN}
 {\Div}^\bot X = \Div X+g(X,\,H).
\end{eqnarray}
Indeed, using $H=\sum\nolimits_{\,a\le n} \epsilon_a\,h(E_{a}, E_{a})$
and $g(X,\,E_{a})=0$, one derives~(\ref{E-divN}):
\begin{equation*}
 \Div X-{\Div}^\bot X =\sum\nolimits_{a} \epsilon_a\,g(\nabla_{E_a} X,\,E_a)
 =-\sum\nolimits_{a}\epsilon_a\,g(h(E_{a}, E_{a}), X) = -g(X,\,H).
\end{equation*}
For a $(1,2)$-tensor $P$ define a $(0,2)$-tensor ${\Div}^\bot P$ by
\[
 ({\Div}^\bot P)(X,Y) = \sum\nolimits_i \epsilon_i\,g((\nabla_{{\cal E}_i}\,P)(X,Y), {\cal E}_i) , \quad (X,Y \in \widetilde{\cal D}).
\]
Then the divergence of $P$ is $(\Div\,P)(X,Y) = \widetilde{\Div}\,P +{\Div}^\bot P$.
For a~${\cal D}$-valued $(1,2)$-tensor $P$, similarly to \eqref{E-divN}, we have
$\sum\nolimits_a \epsilon_a\,g((\nabla_{E_a}\,P)(X,Y), E_a)=-g(P(X,Y), H)$ and
\begin{eqnarray}\label{E-divP}
 {\Div}^\bot P = \Div P+\<P,\,H\>\,,
\end{eqnarray}
where $\<P,\,H\>(X,Y):=g(P(X,Y),\,H)$ is a $(0,2)$-tensor.
For example,
 $\Div^\perp h = \Div h+\<h,\,H\>$.

For any function $f$ on $M$, we introduce the following notation of the projections of its gradient onto distributions $\widetilde{\cal D}$ and $\cal D$:
\[
\nabla^{\top} f \equiv {\widetilde \nabla} f := (\nabla f)^{\top} , \quad \nabla^{\perp} f := ( \nabla f)^{\perp} .
\]
The $\widetilde{\cal D}$-{Laplacian} of a function $f$ is given by the formula
$\widetilde\Delta\,f=\widetilde{\Div}\,(\widetilde\nabla\,f)$.


 The ${\cal D}$-\textit{deformation tensor} ${\rm Def}_{\cal D}\,Z$ of a vector field $Z$ (e.g. $Z=H$) is the symmetric part of $\nabla Z$
restricted to~${\cal D}$,
\[
 2\,{\rm Def}_{\cal D}\,Z(X,Y)=g(\nabla_X Z, Y) +g(\nabla_Y Z, X),\quad ( X,Y\in \mathfrak{X}_{\cal D} ).
\]
As in \cite{rz-1}, we define self-adjoint $(1,1)$-tensors:
${\cal A}:=\sum\nolimits_{\,i}\epsilon_i A_{i}^2\,$, called the \textit{Casorati operator} of ${\cal D}$,
and
 ${\cal T}:=\sum\nolimits_{\,i}\epsilon_i(T_{i}^\sharp)^2$.
We also define the
symmetric $(0,2)$-tensor $\Psi$ by the identity
\begin{eqnarray*}
 \Psi(X,Y) \eq \tr (A_Y A_X+T^\sharp_Y T^\sharp_X),
 \quad (X,Y\in\mathfrak{X}_{\cal D})\,.
\end{eqnarray*}

The partial Ricci tensor can be presented in terms of the extrinsic geometry.

\begin{prop}[see \cite{rz-1}]\label{L-CC-riccati}
Let $g\in{\rm Riem}(M,\,\widetilde{\cal D},\,{\cal D})$. Then the following identities hold:
\begin{eqnarray}\label{E-genRicN}
 r_{{\cal D}} \eq \Div\tilde h +\<\tilde h,\,\tilde H\>
  -\widetilde{\cal A}^\flat -\widetilde{\cal T}^\flat
 -\Psi +{\rm Def}_{\cal D}\,H 
\end{eqnarray}
\end{prop}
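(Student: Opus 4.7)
The plan is to prove \eqref{E-genRicN} by a direct pointwise computation, expanding $r_{\cal D}(X,Y)$ for $X,Y\in{\cal D}$ in a local orthonormal frame $\{E_a,{\cal E}_i\}$ adapted to $(\widetilde{\cal D},{\cal D})$ and then regrouping the resulting terms into extrinsic invariants of the two distributions. This is a Walczak-type frame argument, transported to the pseudo-Riemannian almost-product setting as in \cite{wa1,rz-1}.

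First I would start from
\[
 r_{\cal D}(X,Y)=\sum\nolimits_{a}\epsilon_a\,g\bigl(\nabla_X\nabla_{E_a}E_a-\nabla_{E_a}\nabla_X E_a+\nabla_{[E_a,X]}E_a,\,Y\bigr),
\]
(using the paper's sign convention for $R$) and split every covariant derivative into its $\widetilde{\cal D}$- and ${\cal D}$-components. The key identities are, for $X\in{\cal D}$ and $E_a\in\widetilde{\cal D}$, that $(\nabla_X E_a)^\perp=-\tilde A_{E_a}X-\tilde T^\sharp_{E_a}X$, and, for $Z\in{\cal D}$, that $(\nabla_{E_a}Z)^\top=-A_Z E_a-T^\sharp_Z E_a$; the intrinsic components $(\nabla_{E_a}E_b)^\top$ are retained as connection coefficients inside $\widetilde{\cal D}$.

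Next I would group the resulting terms by structure. Since $\sum_a\epsilon_a(\nabla_{E_a}E_a)^\perp=H$, the contribution from $\nabla_X(\nabla_{E_a}E_a)$ produces $g(\nabla_X H,Y)$, whose symmetrization in $X,Y$ (together with the symmetric part of the commutator term $\nabla_{[E_a,X]}E_a$) gives ${\rm Def}_{\cal D}H(X,Y)$. Compositions of two first-order derivatives, expanded by inserting the resolution of identity $\id=\sum_a\epsilon_a E_a^\flat\otimes E_a+\sum_i\epsilon_i{\cal E}_i^\flat\otimes{\cal E}_i$ in the inner products, generate $\widetilde{\cal A}^\flat$ and $\widetilde{\cal T}^\flat$ from the $\tilde A^2_{E_a}$ and $(\tilde T^\sharp_{E_a})^2$ sums, and $\Psi(X,Y)=\tr(A_YA_X+T^\sharp_Y T^\sharp_X)$ from the remaining mixed $A,T^\sharp$ cross-products. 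Finally, the leftover pieces of the form $g(\nabla_{E_a}\tilde h(X,Y),E_a)$ collect into $\widetilde{\Div}\,\tilde h(X,Y)$; applying the $\widetilde{\cal D}$-valued analogue of \eqref{E-divP}, namely $\widetilde{\Div}\,\tilde h=\Div\,\tilde h+\<\tilde h,\tilde H\>$, then delivers the remaining two terms in \eqref{E-genRicN}.

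The main obstacle is bookkeeping. Each of the three pieces of $R(E_a,X)E_a$ splits into several summands after the $\widetilde{\cal D}/{\cal D}$-decomposition, and one has to verify that the intrinsic connection coefficients $(\nabla_{E_a}E_b)^\top$ cancel in the total sum so that only extrinsic data remain. The antisymmetric-in-$X,Y$ parts must also cancel (since $r_{\cal D}$ is symmetric), which is where the skew-symmetry of $T$ and $\tilde T$ and the identity $[E_a,X]=\nabla_{E_a}X-\nabla_X E_a$ play their roles. Throughout, the pseudo-Riemannian signs $\epsilon_a,\epsilon_i$ must be tracked carefully so that the Casorati-type traces in $\widetilde{\cal A},\widetilde{\cal T},\Psi$ come out with their correct signs.
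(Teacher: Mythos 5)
The paper does not prove Proposition \ref{L-CC-riccati} itself but defers to \cite{rz-1}, where the identity is established by exactly the kind of adapted-frame expansion of $\sum_a\epsilon_a g(R(E_a,X)E_a,Y)$ that you outline; your auxiliary identities $(\nabla_X E_a)^\perp=-\tilde A_{E_a}X-\tilde T^\sharp_{E_a}X$, $(\nabla_{E_a}Z)^\top=-A_ZE_a-T^\sharp_ZE_a$ and $\widetilde{\Div}\,\tilde h=\Div\tilde h+\<\tilde h,\tilde H\>$ are all correct, and the regrouping into ${\rm Def}_{\cal D}H$, $\widetilde{\cal A}^\flat$, $\widetilde{\cal T}^\flat$, $\Psi$ and the divergence terms is the right one. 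So your proposal is correct and takes essentially the same route as the cited source.
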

We define the extrinsic scalar curvatures of $\widetilde{\cal D}$ and $\cal D$ by
\[
\Sm_{\,\rm ex} = g(H,H)-\<h,h\> , \quad \widetilde{\Sm}_{\,\rm ex} = g( {\tilde H} , {\tilde H} )-\< {\tilde h},{\tilde h}\>,
\]
respectively.
Tracing \eqref{E-genRicN} over ${\cal D}$ and applying \eqref{E-Ric-Sm} and the equalities
\begin{eqnarray*}
 \tr{\cal A}\eq\<h,h\>,\quad \tr{\cal T} = -\<T,T\>,\\
 \tr_{g}\Psi \eq \tr({\cal A}+{\cal T}) = \<h,h\> - \<T,T\>,\\
 \tr_{g}\,({\Div}\,h) \eq \Div H,\quad
 \tr_{g}\,({\rm Def}_{\cal D}\,H) = \Div H +g(H,H)
\end{eqnarray*}
yields the formula (see also \cite{wa1})
\begin{equation}\label{eq-ran-ex}
 \Sm_{\rm mix} = \Sm_{\,\rm ex} +\widetilde\Sm_{\,\rm ex} +\<T,T\> +\<\tilde T,\tilde T\> + \Div(H+\tilde H)\,,
\end{equation}
which shows how $\Sm_{\rm mix}$ is built of invariants of the extrinsic geometry of the distributions.

We define the following $(1,2)$-tensors on $(M, \widetilde{\cal D} , {\cal D}, g)$ 
for all $X,Y,Z \in \mathfrak{X}_M$:
\begin{eqnarray*}
 \alpha(X,Y) \eq \frac{1}{2}\,(A_{X^{\perp}} (Y^{\top}) + A_{Y^{\perp}} (X^{\top})), \quad
 {\tilde\alpha}(X,Y)=\frac{1}{2}\,\big({\tilde A}_{X^{\top}}(Y^{\perp}) + {\tilde A}_{Y^{\top}}(X^{\perp})\big),\\
 \theta(X,Y) \eq \frac{1}{2}\,(T^{\sharp}_{X^{\perp}}(Y^{\top}) + T^{\sharp}_{Y^{\perp}}(X^{\top})), \quad
 {\tilde\theta}(X,Y) = \frac{1}{2}\,\big({\tilde T}^{\sharp}_{X^{\top}} (Y^{\perp})
 + {\tilde T}^{\sharp}_{Y^{\top}}(X^{\perp})\big), \\
 {\tilde\delta}_{Z}(X,Y) \eq \frac{1}{2}\,\big(g(\nabla_{X^{\top}} Z,\, Y^{\perp}) + g(\nabla_{Y^{\top}} Z,\, X^{\perp})\big).
\end{eqnarray*}
For any $(0,2)$-tensors $P,Q$ and $S$ on $TM$, we define a tensor $\Lambda_{P,Q}$ by
\[
 \<\Lambda_{P,Q}, S\> =  \sum\nolimits_{\,\lambda, \mu } \epsilon_\lambda \epsilon_\mu
 [S(P(e_{\lambda}, e_{\mu}), Q( e_{\lambda}, e_{\mu})) +
 S(Q(e_{\lambda}, e_{\mu}), P( e_{\lambda}, e_{\mu}))],
\]
where $\{e_{\lambda}\}$ is a full orthonormal basis of $TM$ and $\epsilon_\lambda = g(e_{\lambda}, e_{\lambda})\in\{-1,1\}$.

We also use symmetric $(0,2)$-tensors $\Phi_h$ and $\Phi_{T}$ defined as in \cite{rz-1}, i.e., for any symmetric $(0,2)$-tensor $S$ we have
\begin{eqnarray*}
 \<\Phi_h,\ S\> \eq S(H,\,H) -\sum\nolimits_{\,a,\,b} \epsilon_a\,\epsilon_b\,S(h(E_a,E_b), h(E_a,E_b)),\\
 \<\Phi_T,\ S\> \eq -\sum\nolimits_{\,a,\,b} \epsilon_a\,\epsilon_b\,S(T(E_a,E_b), T(E_a,E_b)).
\end{eqnarray*}
Note that $\Phi_T = -\frac12\,\Lambda_{T,T}$ 
and $\Phi_h = H^\flat \otimes H^\flat - \frac{1}{2}\Lambda_{h,h}$.

We define a self-adjoint $(1,1)$-tensor (with zero trace)
\[
 {\cal K} =\sum\nolimits_{\,i} \epsilon_i\,[T^\sharp_i, A_i] = \sum\nolimits_{\,i} \epsilon_i\,(T^\sharp_i A_i - A_i\,T^\sharp_i)
\]
and its counterpart $\widetilde{\cal K} =\sum\nolimits_{\,a} \epsilon_a\,[{\tilde T}^\sharp_a, {\tilde A}_a]$. It is easy to see that all the above tensors defined with the use of an adapted orthonormal frame in fact do not depend on the choice of such frame.

\begin{rem}[see \cite{rz-1}]\rm \label{Phihzero}
 If $g$ is definite on $\widetilde{\cal D}$ then $\Phi_{h} =0$ if and only if one of the following point-wise conditions holds:
\[
 (i)~h=0; \quad
 (ii)~H\ne0,\ \Sm_{\,\rm ex}=0 \ \mbox{\rm and the image of}\ h\ \mbox{\rm is spanned by} \ H\,.
\]
If ${\cal D}$ is integrable then $\tilde T^\sharp_a = 0$ for all $a=1, \ldots, n$,
hence $\tilde{\cal K} :=\sum\nolimits_{a} \epsilon_a\,[\tilde T^\sharp_a , \tilde A_a]=0$.
If ${\cal D}$ is totally umbilical, then every operator $\tilde A_a$ is a multiple of identity and $\tilde{\cal K}$ vanishes as well.
\end{rem}

\begin{rem}\rm Let $B$ be a symmetric (0,2)-tensor. The following computations will be used to obtain variation formulas: 
\begin{eqnarray*}
 \<\,\<\alpha, {\tilde H}\>, B\> \eq \sum\nolimits_{\,a,i} \epsilon_a \epsilon_i  g(\alpha(E_a, {\cal E}_i),{\tilde H}) B(E_a,{\cal E}_i)
 +\sum\nolimits_{\,a,i} \epsilon_a \epsilon_i g(\alpha({\cal E}_i, E_a), {\tilde H}) B({\cal E}_i, E_a) \\
 \eq 2 \sum\nolimits_{\,a,i} \epsilon_a \epsilon_i g(\alpha(E_a, {\cal E}_i), {\tilde H}) B(E_a,{\cal E}_i) \\
 \eq \sum\nolimits_{\,a,i} \epsilon_a \epsilon_i g(A_{i}(E_a), {\tilde H}) B(E_a,{\cal E}_i) ,
\end{eqnarray*}
\[
 \<\Lambda_{\alpha, \theta}, B\> = \sum\nolimits_{\,a,i}\epsilon_a\epsilon_i B(A_{i}(E_a), T^{\sharp}_i(E_a)) .
\]
Later we will also use the fact that for $X \in \widetilde{\cal D}$, $N \in {\cal D}$ we have
\[
\Lambda_{\alpha , {\tilde \theta} } (X,N) = \frac{1}{2} \sum_{a,i} \epsilon_a \epsilon_i g(X, A_{ i } E_a ) g( N , {\tilde T}^\sharp_{ a } {\cal E}_i ) .
\]
Similar formulas can be obtained for $\Lambda_{\alpha,  {\tilde \alpha}}$, $\Lambda_{\theta,  {\tilde \alpha}}$, etc.

\end{rem}

\subsection{Variation formulas}
\label{sec:prel}

Let $(M, \widetilde{\cal D},g)$ be a manifold with distribution and a pseudo-Riemannian metric $g$.
We consider smooth $1$-parameter variations $\{g_t\in{\rm Riem}(M):\ |t|<\eps\}$ of the metric $g_0 = g$.
We assume that the~induced infinitesimal variations, represented by a symmetric $(0,2)$-tensor ${B}_t\equiv\partial g_t/\partial t$,
are supported in a relatively compact domain $\Omega$ in $M$. We adopt the notations
\begin{equation}\label{E-Sdtg-2}
 \partial_t \equiv \partial/\partial t,\quad {B} \equiv {\dt g_t}_{\,|\,t=0}.
\end{equation}
Since $B$ is symmetric,
for any  $(0,2)$-tensor $C$ we have
 $\<C,\,B\>=\<{\rm Sym}(C),\,B\>$.
We denote by ${\cal D}(t)$ the $g_t$-orthogonal complement of $\widetilde{\cal D}$.

\begin{defn}\rm
Let $\widetilde{\cal D}$ be a distribution on $(M, g)$.
A family of metrics $\{g_t\in{\rm Riem}(M):\ |t|<\eps\}$
such that $g_0 =g$ and for all~$|t|<\eps$:
\begin{equation}\label{e:Var}
 g_{t}(X,Y)=g(X,Y),\quad (X,Y\in\widetilde{\cal D}),
\end{equation}
will be called \textit{$g^{\perp}$-variation}. For $g^{\perp}$-variations the metric on $\widetilde{\cal D}$ is preserved.
\end{defn}

\begin{defn}\rm
Let $\widetilde{\cal D}$ be a distribution on $(M, g)$ and let ${\cal D}$ be its $g$-orthogonal complement.
A family of metrics $\{g_t\in{\rm Riem}(M):\ |t|<\eps\}$ such that $g_0 =g$, for all~$|t|<\eps$ the distributions $\widetilde{\cal D}$ and ${\cal D}$ remain orthogonal and
\begin{equation}\label{e:Vartop}
 g_{t}(X,Y)=g(X,Y), \quad (X,Y\in {\cal D}),
\end{equation}
will be called \textit{${{g^\top}}$-variation}. For ${{g^\top}}$-variations only the metric on $\widetilde{\cal D}$ changes.
\end{defn}

We will now relate the variations defined above to arbitrary variations of $g$.
Let ${\cal D} = {\cal D}(0)$ be the $g$-orthogonal complement of $\widetilde{\cal D}$. While it may not be $g_t$-orthogonal for $t>0$, we can assume that 
the distributions $\widetilde{\cal D}$ and ${\cal D}$ span the tangent bundle.
For any $X \in TM$, let $X_{\widetilde{\cal D}}$ denote the $g$-orthogonal projection of $X$ onto $\widetilde{\cal D}$ and let $X_{\cal D}$ denote the $g$-orthogonal projection of $X$ onto ${\cal D}$. Then,
given $g\in{\rm Riem}(M)$, we have $g_t = g_{t |\, {\cal D} \times {\cal D}} + {g}_{t \,|\,{\cal D}\times\widetilde{\cal D}} + {g}_{t\,|\,\widetilde{\cal D} \times {\cal D} } + {g}_{t\,|\,\widetilde{\cal D}\times\widetilde{\cal D}}$,
where 
\begin{eqnarray*}
&& {g}_{t\,|\,\widetilde{\cal D}\times\widetilde{\cal D}} (X,Y)= g_t(X_{\widetilde{\cal D}},Y_{\widetilde{\cal D}}) , \quad  g_{t |\,{\cal D}\times{\cal D}} (X,Y) = g_t(X_{\cal D},Y_{\cal D}) ,  \\
&& {g}_{t\,|\,{\cal D}\times\widetilde{\cal D}} (X,Y)= g_t(X_{\cal D},Y_{\widetilde{\cal D}}), \quad {g}_{t\,|\,\widetilde{\cal D} \times {\cal D} } (X,Y) = g_t(X_{\widetilde{\cal D}},Y_{\cal D}),
\end{eqnarray*}
and we can present $g_t$ in the following form:
\[
 g_t 
  = \bigg(\begin{array}{cc}
   g_{t\,|\,{\cal D}\times{\cal D}} & {g}_{t\,|\,{\cal D}\times\widetilde{\cal D}} \\
   {g}_{t\,|\,\widetilde{\cal D}\times{\cal D}} & {g}_{t\,|\,\widetilde{\cal D}\times\widetilde{\cal D}}
 \end{array}\bigg) .
\]

Similarly,
$B_t=B_t^\perp + {B}_t^{_/} + {\tilde B}_t$, where
$B_t^\perp= \dt g_{t |\,{\cal D}\times{\cal D}} ,\ {\tilde B}_t = \dt {g}_{t\,|\,\widetilde{\cal D}\times\widetilde{\cal D}} $ and ${B}_t^{_/}= \dt {g}_{t\,|\,{\cal D}\times\widetilde{\cal D}} + \dt {g}_{t\,|\,\widetilde{\cal D} \times {\cal D} }$.
For $g^\perp$-variations $g_t = g_{t |\,{\cal D}\times{\cal D}} + {g}_{t\,|\,{\cal D}\times\widetilde{\cal D}} + {g}_{t\,|\,\widetilde{\cal D} \times {\cal D} } +  {g}_{0\,|\,\widetilde{\cal D}\times\widetilde{\cal D}}$ and for
${g^\top}$-variations $g_t = g_{0 |\,{\cal D}\times{\cal D}} + {g}_{0\,|\,{\cal D}\times\widetilde{\cal D}} + {g}_{0\,|\,\widetilde{\cal D} \times {\cal D} } +  {g}_{t\,|\,\widetilde{\cal D}\times\widetilde{\cal D}}$ we have, respectively,
\[
 B_t = B_t^\perp + {B}_t^{_/}
 =\bigg(\begin{array}{cc}
   B^\perp_{ t \,|\,{\cal D}\times{\cal D}} &
   {B}^{_/}_{ t \,|\,{\cal D}\times\widetilde{\cal D}} \\
   {B}^{_/}_{ t \,|\,\widetilde{\cal D}\times{\cal D}} & 0
 \end{array}\bigg),
 \quad
 B_t = {\tilde B}_t
 =\bigg(\begin{array}{cc}
   0 & 0 \\ 0 & {\tilde B}_{ t \,|\,\widetilde{\cal D}\times\widetilde{\cal D}}
 \end{array}\bigg).
\]
By the above, the derivative $B_t$ of any variation $g_t$ can be decomposed into sum of derivatives of some $g^\perp$- and ${g^\top}$-variations.

The Levi-Civita connection $\nabla^t$ of $g_t\ (|t| < \eps)$ evolves as, see for example~\cite{topp},
\begin{equation}\label{eq2G}
 2\,g_t(\dt(\nabla^t_X\,Y), Z) = (\nabla^t_X\,{B})(Y,Z)+(\nabla^t_Y\,{B})(X,Z)-(\nabla^t_Z\,{B})(X,Y),\quad
 X,Y,Z\in\mathfrak{X}_M,
\end{equation}
where the first covariant derivative of a $(0,2)$-tensor ${B}$ is expressed~as
\begin{eqnarray*}
( \nabla_{Z}\,{B} )(Y,V)\eq Z({B}(Y,V)) -{B}(\nabla_Z Y, V)-{B}(Y,\nabla_Z V).
\end{eqnarray*}

Recall that the distribution ${\cal D}(t)$ is defined as $g_{t}$-orthogonal complement of $\widetilde{ \cal D}$. 
Let $^\top$ and $^\perp$ denote the $g_t$-orthogonal projections onto $\widetilde{\cal D}$ and ${\cal D}(t)$, respectively. Note that these projections are $t$-dependent.

\begin{lem}\label{prop-Ei-a}
Let $g_t$ be a $g^\perp$-variation of $g$ with $B_t = \dt g_t$. Let $\{E_a,\,{\cal E}_{i}\}$ be a local $(\widetilde{\cal D},\,{\cal D})$-adapted and orthonormal for $t=0$ frame, that evolves according to
 \begin{equation}\label{E-frameE}
 \dt E_a = 0,\qquad
 \dt {\cal E}_{i}=-(1/2)\, ({B}_t^\sharp({\cal E}_{i}))^{\perp} - ({B}_t^\sharp({\cal E}_{i}))^{\top}.
\end{equation}
Then, for all $\,t, $ $\{E_a(t),{\cal E}_{i}(t)\}$ is a $g_t$-orthonormal frame
adapted to $(\widetilde{\cal D},{\cal D}(t))$.
\end{lem}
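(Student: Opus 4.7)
The plan is to verify the conclusion by showing that each of the defining relations for a $g_t$-orthonormal frame adapted to $(\widetilde{\cal D}, {\cal D}(t))$ holds as an identity in $t$: I check the initial condition at $t=0$ and then show the $t$-derivative vanishes. Concretely, I need to verify (i) $g_t(E_a(t), E_b(t)) = \epsilon_a\delta_{ab}$, (ii) $E_a(t) \in \widetilde{\cal D}$, (iii) $g_t(E_a(t), {\cal E}_i(t)) = 0$ (equivalently, ${\cal E}_i(t) \in {\cal D}(t)$), and (iv) $g_t({\cal E}_i(t), {\cal E}_j(t)) = \epsilon_i\delta_{ij}$. Throughout, the projections $^\top$ and $^\perp$ appearing in \eqref{E-frameE} are the $g_t$-orthogonal projections onto $\widetilde{\cal D}$ and ${\cal D}(t)$, and $\sharp$ denotes the $g_t$-sharp.

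Items (i) and (ii) are immediate: $\partial_t E_a = 0$ gives $E_a(t) = E_a(0) \in \widetilde{\cal D}$, and the $g^\perp$-variation assumption \eqref{e:Var} then gives $g_t(E_a, E_b) = g(E_a, E_b) = \epsilon_a\delta_{ab}$. For (iii), I differentiate to obtain $\partial_t g_t(E_a, {\cal E}_i(t)) = B_t(E_a, {\cal E}_i) + g_t(E_a, \partial_t {\cal E}_i)$. Substituting \eqref{E-frameE}, the $(B_t^\sharp {\cal E}_i)^\perp$ contribution drops out because $E_a$ is $g_t$-orthogonal to ${\cal D}(t)$, while for the $^\top$-piece one has $g_t(E_a, (B_t^\sharp {\cal E}_i)^\top) = g_t(E_a, B_t^\sharp {\cal E}_i) = B_t(E_a, {\cal E}_i)$, since removing the $g_t$-perpendicular component does not affect the inner product with $E_a \in \widetilde{\cal D}$. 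The two contributions cancel, and together with $g_0(E_a, {\cal E}_i) = 0$ this yields (iii).

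For (iv), differentiating $\phi_{ij}(t) := g_t({\cal E}_i(t), {\cal E}_j(t))$ gives $\phi_{ij}'(t) = B_t({\cal E}_i, {\cal E}_j) + g_t(\partial_t {\cal E}_i, {\cal E}_j) + g_t({\cal E}_i, \partial_t {\cal E}_j)$. This is where (iii) enters essentially: since ${\cal E}_j(t) \in {\cal D}(t)$, the $(B_t^\sharp {\cal E}_i)^\top$ part of $\partial_t {\cal E}_i$ is $g_t$-orthogonal to ${\cal E}_j(t)$ and contributes zero, while $-\tfrac12 (B_t^\sharp {\cal E}_i)^\perp$ contributes $-\tfrac12\, g_t(B_t^\sharp {\cal E}_i, {\cal E}_j) = -\tfrac12\, B_t({\cal E}_i, {\cal E}_j)$ by the same argument as above; the symmetric cross-term contributes another $-\tfrac12\, B_t({\cal E}_i, {\cal E}_j)$. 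The three pieces sum to zero, and the initial condition $g_0({\cal E}_i, {\cal E}_j) = \epsilon_i\delta_{ij}$ closes the argument.

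The main subtlety is the $t$-dependence of the decomposition prescribed by \eqref{E-frameE}: the projections $^\top, ^\perp$ and the sharp $\sharp$ all vary with $t$, and one has to use $g_t$-orthogonality carefully to cancel the correct terms. This is precisely why (iii) must be established before (iv), so that in computing $\phi_{ij}'(t)$ the $^\top$-piece of $\partial_t {\cal E}_i$ can be discarded against ${\cal E}_j(t) \in {\cal D}(t)$; otherwise the evolution equations for $\phi_{ij}$ would be genuinely coupled to the off-diagonal inner products $g_t(E_a, {\cal E}_i(t))$.
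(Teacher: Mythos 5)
Your proof is correct and takes essentially the same route as the paper's: verify the inner products at $t=0$, show their $t$-derivatives vanish term by term, and establish $g_t(E_a,{\cal E}_i)=0$ (hence ${\cal E}_i(t)\in{\cal D}(t)$) before treating $g_t({\cal E}_i,{\cal E}_j)$, which is exactly the ordering the paper uses. Your explicit remark on why the $t$-dependent projections force this ordering is a helpful clarification but not a departure from the paper's argument.
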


\proof
Since $\dt E_a = 0$ and $E_a (0) \in \widetilde{\cal D}$, we have for $g^\perp$-variation
\[
 \dt(g_t(E_a, E_b))=0.
\]


Also,
\begin{eqnarray*}
 &&\dt(g_t(E_a, {\cal E}_i)) = (\dt g_t)(E_a(t), {\cal E}_i (t)) + g_t(\dt E_a(t), {\cal E}_i (t)) +g_t(E_a(t), \dt {\cal E}_i (t))
   \\
 &&= {B}_t(E_a(t), {\cal E}_i (t)) - \frac12\,g_t(({B}_t^\sharp({\cal E}_i (t)))^{\perp}, E_a(t)) - \,g_t(E_a(t), {B}_t^\sharp({\cal E}_i  (t))^{\top})=0.
\end{eqnarray*}
Now that we know that $g_t(E_a, {\cal E}_i) =0$, it follows that ${\cal E}_i (t) \in {\cal D}(t)$ and for any $X$ we have $g_t({\cal E}_i, X^{\top}) =0$. We can finish the proof by computing
\begin{eqnarray*}
 &&\dt(g_t({\cal E}_i, {\cal E}_j)) =  g_t(\dt {\cal E}_i(t), {\cal E}_j (t)) +g_t({\cal E}_i(t), \dt {\cal E}_j (t)) +(\dt g_t)({\cal E}_i(t), {\cal E}_j (t))  \\
 &&= {B}_t({\cal E}_i(t), {\cal E}_j (t)) - \frac12\,g_t(({B}_t^\sharp({\cal E}_i(t)))^{\perp}, {\cal E}_j (t)) - \,g_t({\cal E}_i(t), ({B}_t^\sharp {\cal E}_j  (t))^{\perp})=0.
 \qed
\end{eqnarray*}

The evolution of ${\cal D}(t)$ gives rise to the evolution of both $\widetilde{ \cal D}$- and ${\cal D}(t)$-components of any vector $X$ on $M$.

\begin{lem}\label{projections}
Let $g_t$ be a $g^\perp$-variation of $g$. Then for any $t$-dependent vector $X$ on $M$, we have:
\begin{eqnarray*}
 \dt (X^{\top}) = (\dt X)^{\top} + (B^{\sharp} (X^{\perp}))^{\top},\qquad
 \dt (X^{\perp}) = (\dt X)^{\perp} - (B^{\sharp} (X^{\perp}))^{\top}.
\end{eqnarray*}
\end{lem}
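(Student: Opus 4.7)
The plan is to start from the pointwise decomposition $X(t)=X^{\top}(t)+X^{\perp}(t)$, in which $X^{\top}(t)$ lies in the fixed distribution $\widetilde{\cal D}$ while $X^{\perp}(t)$ lies in the moving complement ${\cal D}(t)$. Differentiating this identity at $t=0$ immediately gives $\partial_{t}X=\partial_{t}(X^{\top})+\partial_{t}(X^{\perp})$. Since $\widetilde{\cal D}$ does not depend on $t$, the vector $\partial_{t}(X^{\top})$ is automatically a section of $\widetilde{\cal D}$, so it coincides with its own $\top$-projection. The only non-trivial ingredient I need is the $\widetilde{\cal D}$-piece that $\partial_{t}(X^{\perp})$ picks up from the rotation of ${\cal D}(t)$.

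To compute that piece, I will exploit the orthogonality $g_{t}(X^{\perp}(t),Y)=0$, valid for any time-independent $Y\in\widetilde{\cal D}$. Differentiating at $t=0$ and using $\partial_{t}g_{t}|_{t=0}=B$ yields
\[
B(X^{\perp},Y)+g(\partial_{t}(X^{\perp}),Y)=0,
\]
so that $g\bigl((\partial_{t}X^{\perp})^{\top}+(B^{\sharp}X^{\perp})^{\top},\,Y\bigr)=0$ for every $Y\in\widetilde{\cal D}$. Non-degeneracy of $g$ on $\widetilde{\cal D}$ will then force $(\partial_{t}X^{\perp})^{\top}=-(B^{\sharp}X^{\perp})^{\top}$, which is the crucial step.

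With this identity in hand, both formulas will fall out by bookkeeping. Taking the $\top$-projection of $\partial_{t}X=\partial_{t}(X^{\top})+\partial_{t}(X^{\perp})$ and using $(\partial_{t}X^{\top})^{\top}=\partial_{t}(X^{\top})$ will produce $\partial_{t}(X^{\top})=(\partial_{t}X)^{\top}+(B^{\sharp}X^{\perp})^{\top}$, the first equation. The second equation will follow either by subtraction, or equivalently by combining the observation $(\partial_{t}X^{\perp})^{\perp}=(\partial_{t}X)^{\perp}$ (which is a consequence of $\partial_{t}(X^{\top})\in\widetilde{\cal D}$) with the formula for $(\partial_{t}X^{\perp})^{\top}$ just derived.

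The main obstacle I anticipate is essentially cosmetic: keeping track of the distinction between the $g_{t}$-orthogonal projections, which themselves depend on $t$, and the fixed $g$-orthogonal projection used to identify components. Evaluating the derivative of the orthogonality constraint at $t=0$, where the two agree, sidesteps the issue cleanly; extending the identity to general $t$ would just replace $g$ and $B$ by $g_{t}$ and $B_{t}$ throughout without changing the structure of the argument. Note also that the defining property of a $g^{\perp}$-variation, namely $B(Y,Y')=0$ for $Y,Y'\in\widetilde{\cal D}$, is not actually needed in this particular proof; the argument uses only the existence of $B$ as a symmetric $(0,2)$-tensor.
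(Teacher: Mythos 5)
Your proof is correct, but it takes a genuinely different route from the paper's. The paper proves the lemma by expanding $X^{\top}$ and $X^{\perp}$ in the evolving adapted frame of Lemma~\ref{prop-Ei-a}, i.e.\ $X^{\top}=\sum_a\epsilon_a\,g_t(X,E_a(t))E_a$ and $X^{\perp}=\sum_i\epsilon_i\,g_t(X,{\cal E}_i(t)){\cal E}_i(t)$, and then differentiating these expressions using the frame evolution equations \eqref{E-frameE} together with the identity $(B^{\sharp}X^{\top})^{\top}=0$, which is where the $g^{\perp}$-hypothesis enters. You instead differentiate the orthogonality constraint $g_t(X^{\perp}(t),Y)=0$, $Y\in\widetilde{\cal D}$, to extract $(\dt X^{\perp})^{\top}=-(B^{\sharp}X^{\perp})^{\top}$ directly, and combine it with the observation that $\dt(X^{\top})\in\widetilde{\cal D}$ because $\widetilde{\cal D}$ is a fixed subbundle. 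Your argument is frame-free, does not invoke Lemma~\ref{prop-Ei-a} at all, and — as you correctly note — never uses the vanishing of $B$ on $\widetilde{\cal D}\times\widetilde{\cal D}$; this makes the independence from the type of variation transparent, which the paper only asserts later (in the sentence preceding Proposition~\ref{propvar2}, where it states that Lemma~\ref{projections} holds unchanged for general and ${g^\top}$-variations). What the paper's frame computation buys in exchange is that the same evolving frame is reused throughout the proofs of Proposition~\ref{propvar1}, so the bookkeeping there is consistent; your derivation is the cleaner and more general proof of the lemma itself.
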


\proof
Using the frame from Lemma \ref{prop-Ei-a}, we can write
 \begin{eqnarray}
 \label{Xtop}
 && X^{\top} = \sum_a \epsilon_a g_t (X_t , E_a (t) ) E_a , \\
 \label{Xperp}
 && X^{\perp} = \sum_i \epsilon_i g_t (X_t , {\cal E}_i (t) ) {\cal E}_i (t).
 \end{eqnarray}
We have
\begin{eqnarray*}
{B}_t(E_a(t), E_b(t)) \eq (\dt g_t)(E_a(t), E_b(t))  \\
\eq \dt(g_t(E_a, E_b)) - g_t(\dt E_a(t), E_b(t)) - g_t(E_a(t), \dt E_b(t)) =0,
\end{eqnarray*}
hence, $(B^{\sharp}(X^{\top}))^{\top}=0$, which implies
\begin{equation} \label{Bsharptop}
(B^{\sharp}(X))^{\top} = (B^{\sharp} (X^{\perp}))^{\top}.
\end{equation}
The proof follows from differentiating \eqref{Xtop} and \eqref{Xperp}, using \eqref{E-frameE} and \eqref{Bsharptop}.
\qed


\begin{prop} \label{propvar1}
Let $g_t$ be a $g^\perp$-variation of $g$. Then
\begin{subequations}
\begin{eqnarray}\label{E-tildeh-gen}
 &&\hskip-9mm
 \dt\<{\tilde h}, {\tilde h}\> =  \<\Div{\tilde h} - 4\,\Lambda_{ {\tilde\alpha}, \theta }
 +\widetilde{\cal K}^\flat,\, B\> - \Div\<{\tilde h}, B\>,\\
\label{E-tildeH-gen}
 &&\hskip-9mm \dt g({\tilde H}, {\tilde H}) = \<\,(\Div{\tilde H})\,g^{\perp}
 + 4\,\<\theta, {\tilde H}\>,\, B\> -\Div ((\tr_{\cal D} B^\sharp){\tilde H}),\\
\label{E-h-gen}
 &&\hskip-9mm \dt \<h,h\> = 2 \Div(\<\alpha, B\>) -2\,\< ( \Div \alpha )_{ | \widetilde{\cal D} \times {\cal D}} + ( \Div \alpha )_{ |  {\cal D} \times \widetilde{\cal D} }  + \Lambda_{\alpha, {\tilde\alpha}
 + {\tilde \theta}} + \Phi_h ,\, B\> \nonumber \\ && -B(H,H),\\
\label{E-H-gen}
 &&\hskip-9mm \dt g(H, H) = -B(H,H) +2\,\big\<\,\<{\tilde\theta - \tilde\alpha},\, H\>
 +{\rm Sym}(H^{\flat}\otimes{\tilde H}^{\flat}) -{\tilde\delta}_H,\, B\big\> + 2 \Div (B^{\sharp} H)^{\top},\quad\\
\label{E-tildeT-gen}
 &&\hskip-9mm \dt\<{\tilde T}, {\tilde T}\>
 = 2\,\<\,\tilde{\cal T}^\flat +\Lambda_{{\tilde \theta},\theta-\alpha}- ( \Div{\tilde \theta} )_{ | \widetilde{\cal D} \times {\cal D}} - ( \Div \tilde \theta )_{ |  {\cal D} \times \widetilde{\cal D} } ,\, B\,\>
 + 2\Div \<{\tilde \theta},\,B\>,\\
\label{E-T-gen}
 &&\hskip-9mm \dt\<T, T\> = - \<\Phi_T, B\>.
\end{eqnarray}
\end{subequations}
\end{prop}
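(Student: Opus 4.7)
The plan is to treat all six variation formulas by a uniform strategy: fix the $g_t$-adapted orthonormal frame $\{E_a, \mathcal{E}_i(t)\}$ of Lemma \ref{prop-Ei-a} (so orthonormality is automatic for every $t$), expand each quantity as an explicit finite sum over this frame, and differentiate at $t=0$ using four ingredients: (i) the frame evolution \eqref{E-frameE}; (ii) the connection evolution \eqref{eq2G}; (iii) Lemma \ref{projections} for the $t$-derivative of $\top$- and $\perp$-projections; (iv) the key simplification $B|_{\widetilde{\cal D}\times\widetilde{\cal D}} \equiv 0$ of a $g^\perp$-variation, which immediately kills many terms (for instance $B(E_a,E_b)=0$ and $B(H,X^\top)=0$). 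Whenever a term of the form $(\nabla_X B)(Y,Z)$ appears, I extract a divergence via the Leibniz identity $X\cdot B(Y,Z) = (\nabla_X B)(Y,Z) + B(\nabla_X Y, Z) + B(Y, \nabla_X Z)$, which moves derivatives off $B$ and onto the accompanying tensors, producing both a ``divergence" boundary piece and an ``interior" piece $\langle \Div(\cdot), B\rangle$.

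For \eqref{E-tildeh-gen} and \eqref{E-tildeH-gen} I differentiate $\langle\tilde h,\tilde h\rangle = \sum_{i,j}\epsilon_i\epsilon_j g_t(\tilde h_t(\mathcal{E}_i,\mathcal{E}_j),\tilde h_t(\mathcal{E}_i,\mathcal{E}_j))$ and $g_t(\tilde H_t,\tilde H_t)$. The connection-change terms, after symmetrization, produce the divergence $\Div\langle\tilde h,B\rangle$ together with the interior term $\langle\Div\tilde h,B\rangle$; the frame-motion contribution \eqref{E-frameE} combined with the projection change from Lemma \ref{projections} furnishes the tensors $\Lambda_{\tilde\alpha,\theta}$ and $\widetilde{\mathcal{K}}^\flat$ (here the definition $\widetilde{\mathcal K}=\sum\epsilon_a[\tilde T^\sharp_a,\tilde A_a]$ arises through bracket-reassembly of $\tilde A_a B^\sharp - B^\sharp \tilde A_a$ type terms when restricted to $\mathcal D$). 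The $\Lambda$-calculus from the final Remark is invoked repeatedly to collapse raw sums into the named tensors. For \eqref{E-tildeH-gen}, the additional ingredient is that $\tilde H=\sum\epsilon_i\tilde h(\mathcal{E}_i,\mathcal{E}_i)$ is a trace over $\mathcal D$; the change of this trace is exactly $\Div((\tr_{\mathcal D}B^\sharp)\tilde H)$ up to the interior term $\langle(\Div\tilde H)g^\perp+4\langle\theta,\tilde H\rangle,B\rangle$.

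For \eqref{E-h-gen} and \eqref{E-H-gen}, the vectors $E_a$ are $t$-independent by \eqref{E-frameE}, so frame-motion terms are absent, but the projection $^\perp$ in $h(X,Y)=\frac12(\nabla_X Y+\nabla_Y X)^\perp$ is $t$-dependent, and Lemma \ref{projections} produces the $B(H,H)$ terms. The connection evolution \eqref{eq2G} contributes $(\nabla_{E_a} B)$-type terms; applying the Leibniz identity and using $B|_{\widetilde{\cal D}\times\widetilde{\cal D}}=0$ telescopes them into $\Div\langle\alpha,B\rangle$ plus the interior tensors $\Lambda_{\alpha,\tilde\alpha+\tilde\theta}$ and $\Phi_h$, whose appearance follows from the identity $\Phi_h = H^\flat\otimes H^\flat - \frac12\Lambda_{h,h}$ and the sum $\sum_{a,b}\epsilon_a\epsilon_b B(h(E_a,E_b),h(E_a,E_b))$ generated by the projection-change terms. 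The analogous computation for $g(H,H)$, where $H$ carries a trace over $\widetilde{\cal D}$, generates $\Div(B^\sharp H)^\top$ and the symmetric tensor ${\rm Sym}(H^\flat\otimes\tilde H^\flat)+\langle\tilde\theta-\tilde\alpha,H\rangle-\tilde\delta_H$; careful bookkeeping of symmetric versus antisymmetric parts (using $\langle C,B\rangle=\langle{\rm Sym}(C),B\rangle$) is essential.

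For \eqref{E-tildeT-gen} and \eqref{E-T-gen}, the antisymmetric tensors $T(X,Y)=\frac12[X,Y]^\perp$ and $\tilde T(X,Y)=\frac12[X,Y]^\top$ admit the expression $[X,Y]=\nabla_X Y-\nabla_Y X$, so the connection-change contribution from \eqref{eq2G} cancels by antisymmetry, leaving only the projection-change and frame-motion contributions. For \eqref{E-T-gen}, evaluated on the $t$-independent $E_a$, only the $^\perp$-projection change survives, and by the definition of $\Phi_T$ this directly gives $-\langle\Phi_T,B\rangle$. For \eqref{E-tildeT-gen}, both frame motion \eqref{E-frameE} and projection change appear, yielding $\tilde{\mathcal T}^\flat$ and $\Lambda_{\tilde\theta,\theta-\alpha}$ in the interior, while the surviving symmetric remainder of $\nabla Z$ assembles into $\Div\langle\tilde\theta,B\rangle$ and its companion $(\Div\tilde\theta)_{|\widetilde{\cal D}\times{\cal D}}+(\Div\tilde\theta)_{|{\cal D}\times\widetilde{\cal D}}$. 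The principal obstacle is not any single step but the accurate bookkeeping: each formula produces roughly a dozen intermediate terms, and the compactness of the final expressions hinges on systematically grouping the $\widetilde{\cal D}$-$\widetilde{\cal D}$, $\widetilde{\cal D}$-$\cal D$ and $\cal D$-$\cal D$ blocks of $B$ and on repeatedly invoking the $\Lambda$-identities of the final Remark to recognize named tensors.
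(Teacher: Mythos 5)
Your plan follows essentially the same route as the paper's proof: the same evolving adapted frame (Lemma \ref{prop-Ei-a}), the connection variation \eqref{eq2G}, Lemma \ref{projections}, the vanishing of $B$ on $\widetilde{\cal D}\times\widetilde{\cal D}$, and the Leibniz extraction of divergence terms, with the raw sums collapsed into the named $\Lambda$-, $\Phi$- and ${\cal K}$-tensors exactly as in the text. The only slip is in the description of \eqref{E-T-gen}: the paper shows $g(\dt T(E_a,E_b),X^\perp)=0$, i.e.\ both the connection-change and the projection-change contributions vanish there, and the term that survives and yields $-\langle\Phi_T,B\rangle$ is the direct metric-variation term $B(T(E_a,E_b),T(E_a,E_b))$, not a projection-change term.
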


\begin{proof}
In the proof we denote by (i)$_j$ the $j$-th term of the right hand side of formula (i). Assume $\nabla^t_{{\cal E}_i}\,{\cal E}_j \in \widetilde{\cal D}$ at a point $x\in M$.
Since $B$ vanishes on $\widetilde{\cal D}\times\widetilde{\cal D}$, we obtain
\[
 B({\tilde h}({\cal E}_i,{\cal E}_j),{\tilde h}({\cal E}_i,{\cal E}_j)) =0.
\]
\textbf{Proof of} \eqref{E-tildeh-gen}. We use Lemma~\ref{projections} to compute $\dt\<\tilde h,\tilde h\>$, as a sum of 10 terms in $g(\cdot\,,E_a)$,
\begin{eqnarray}\label{E-10terms}
 &&\dt\<\tilde h,\tilde h\> = \sum\nolimits_{\,i,j}\,B(\tilde h({\cal E}_i,{\cal E}_j),\tilde h({\cal E}_i,{\cal E}_j))
 +\sum\nolimits_{\,i,j}\,g(\tilde h({\cal E}_i,{\cal E}_j), \dt((\nabla^t_{{\cal E}_i}\,{\cal E}_j +\nabla^t_{{\cal E}_j}\,{\cal E}_i )^\top))\\
\nonumber
 &&\hskip-5mm = \sum\nolimits_{\,i,j,a}\,g(\tilde h({\cal E}_i,{\cal E}_j),E_a)\,
 g\big(
 \nabla^t_{(\dt{\cal E}_i)^\top}{\cal E}_j
 +\nabla^t_{(\dt{\cal E}_j)^\top}{\cal E}_i
 +\nabla^t_{{\cal E}_i}((\dt{\cal E}_j)^\top)
 +\nabla^t_{{\cal E}_j}((\dt{\cal E}_i)^\top)\\
\nonumber
 &&\hskip-8mm
 +\,\nabla^t_{{\cal E}_i}((\dt{\cal E}_j)^\bot)
 +\nabla^t_{{\cal E}_j}((\dt{\cal E}_i)^\bot)
 +\nabla^t_{(\dt{\cal E}_i)^\bot}\,{\cal E}_j
 +\nabla^t_{(\dt{\cal E}_j)^\bot}\,{\cal E}_i
 +(\dt\nabla^t)_{{\cal E}_i}\,{\cal E}_j+(\dt\nabla^t)_{{\cal E}_j}\,{\cal E}_i,\,
 E_a\big),
\end{eqnarray}
where the last two terms \eqref{E-10terms}$_9$ and \eqref{E-10terms}$_{10}$ are equal 
and can be computed in the following way:
\begin{eqnarray*}
 &&\hskip-8mm g((\dt \nabla^t)_{{\cal E}_i}\,{\cal E}_j, E_a) = (\nabla^t_{{\cal E}_i}\,B)({\cal E}_j, E_a)
 +(\nabla^t_{{\cal E}_j}\,B)({\cal E}_i, E_a) -(\nabla^t_{E_a}\,B)({\cal E}_i, {\cal E}_j)\\
 && =\,\nabla^t_{{\cal E}_i}\,B({\cal E}_j, E_a)
 -B(\nabla^t_{{\cal E}_i}\,E_a, {\cal E}_j)
 +\nabla^t_{{\cal E}_j}\,B({\cal E}_i, E_a)
 -B(\nabla^t_{{\cal E}_j}\,E_a, {\cal E}_i) \\
 &&
 -\,\nabla^t_{E_a}\,B({\cal E}_i, {\cal E}_j)
 + B(\nabla^t_{E_a}\,{\cal E}_i, {\cal E}_j)
 + B(\nabla^t_{E_a}\,{\cal E}_j, {\cal E}_i).
\end{eqnarray*}
Using Lemma~\ref{prop-Ei-a}, we rewrite \eqref{E-10terms} as
\begin{eqnarray}\label{E-7terms}
\nonumber
 && \dt\<\tilde h,\tilde h\> =
 -\sum\nolimits_{\,i,j,a}\epsilon_{i}\epsilon_{j}\epsilon_{a}\nabla^t_{E_a}\,B({\cal E}_i,
 {\cal E}_j)\,g({\tilde h}({\cal E}_i, {\cal E}_j), E_a)\\
\nonumber
  && -\sum\nolimits_{\,i,j} \epsilon_{i}\epsilon_{j}
  \big[\,g({\tilde h}(B^{\sharp} {\cal E}_i, {\cal E}_j), {\tilde h}({\cal E}_i, {\cal E}_j))
  +g({\tilde h}(B^{\sharp} {\cal E}_j, {\cal E}_i), {\tilde h}({\cal E}_i, {\cal E}_j))\big] \\
\nonumber
 && -\sum\nolimits_{\,i,j,a} \epsilon_{i}\epsilon_{j}\epsilon_{a}\,g(\nabla^t_{{\cal E}_i}((B^{\sharp}{\cal E}_j)^{\top})
 +\nabla^t_{{\cal E}_j} ((B^{\sharp}{\cal E}_i)^{\top}), E_a)\,g({\tilde h}({\cal E}_i, {\cal E}_j), E_a) \\
\nonumber
 && -\sum\nolimits_{\,i,j,a} \epsilon_{i}\epsilon_{j}\epsilon_{a} \,g(\nabla^t_{(B^{\sharp}{\cal E}_j)^{\top}}{\cal E}_i
 +\nabla^t_{(B^{\sharp}{\cal E}_i)^{\top}}{\cal E}_j, E_a)\,g({\tilde h}({\cal E}_i, {\cal E}_j), E_a) \\
\nonumber
 && +\sum\nolimits_{\,i,j,a} \epsilon_{i}\epsilon_{j}\epsilon_{a} \, (\nabla^t_{{\cal E}_i} B({\cal E}_j, E_a)
 +\nabla^t_{{\cal E}_j} B({\cal E}_i, E_a))\,g({\tilde h}({\cal E}_i, {\cal E}_j), E_a) \\
\nonumber
 && -\sum\nolimits_{\,i,j,a} \epsilon_{i}\epsilon_{j}\epsilon_{a} (B(\nabla^t_{{\cal E}_i} E_a, {\cal E}_j)
 + B(\nabla^t_{{\cal E}_j} E_a, {\cal E}_i))\,g({\tilde h}({\cal E}_i, {\cal E}_j), E_a) \\
 && +\sum\nolimits_{\,i,j,a} \epsilon_{i}\epsilon_{j}\epsilon_{a}(B(\nabla^t_{E_a}{\cal E}_i, {\cal E}_j)
 + B(\nabla^t_{E_a} {\cal E}_j, {\cal E}_i))\,g({\tilde h}({\cal E}_i, {\cal E}_j), E_a) .
\end{eqnarray}
From the definition $2\,{\rm Sym}(C)=C+C^*$ we have
\[
 \<2\sum\nolimits_{\,a} ({\tilde T}^{\sharp}_a {\tilde A}_a)^\flat, \, B\>
 =2\,\<{\rm Sym}(\sum\nolimits_{\,a} {\tilde T}^{\sharp}_a {\tilde A}_a)^\flat,\, B\>
 =\<\sum\nolimits_{\,a} [{\tilde T}^{\sharp}_a,\, {\tilde A}_a]^\flat, \, B\>
 = \<\widetilde{\cal K}^\flat,\,B\>.
\]
and we obtain \eqref{E-tildeh-gen}, using the following computations for all 7 
lines of \eqref{E-7terms}:
\begin{eqnarray*}
 && \sum\nolimits_{\,i,j} g({\tilde h}(B^{\sharp}{\cal E}_i, {\cal E}_j), {\tilde h}({\cal E}_i, {\cal E}_j))
 = \<\tilde{\cal A}^\flat,\ B\>,\\
 && \sum\nolimits_{\,i,j,a}\epsilon_i \epsilon_j\epsilon_a\,g(\nabla^t_{{\cal E}_i} (B^{\sharp}({\cal E}_j)^{\top}), E_a)
 \,g({\tilde h}({\cal E}_i, {\cal E}_j), E_a) \\
 &&\hskip10mm = \Div \<{\tilde\alpha}, B\> - \< ( \Div{\tilde\alpha} )_{ | \widetilde{\cal D} \times {\cal D}} + ( \Div \tilde \alpha )_{ |  {\cal D} \times \widetilde{\cal D} } ,\,B\>,\\
 && \sum\nolimits_{\,i,j,a}\epsilon_i\epsilon_j\epsilon_a\,g(\nabla^t_{({B^{\sharp} {\cal E}_j })^{\top}}{\cal E}_i, E_a)\,
 g({\tilde h}({\cal E}_i, {\cal E}_j), E_a) \\
 &&\hskip10mm = -\sum\nolimits_{\,i,a} \epsilon_i\,\epsilon_a\,B({\tilde A}_a({\cal E}_i),\ A_{i}(E_a) - T^{\sharp}_{i}(E_a))
 = -\<\Lambda_{{\tilde\alpha}, \alpha - \theta}, B\>,\\
 && \sum\nolimits_{\,i,j,a} \epsilon_{i}\epsilon_{j}\epsilon_{a}\,g(\tilde h({\cal E}_i, {\cal E}_j), E_a)
 \nabla^t_{{\cal E}_i} B(E_a, {\cal E}_j) = \Div\<\tilde\alpha,\ B\> - \< ( \Div\tilde\alpha )_{ | \widetilde{\cal D} \times {\cal D}} + ( \Div \tilde \alpha )_{ |  {\cal D} \times \widetilde{\cal D} } ,\ B\>,\\
 && \sum\nolimits_{\,i,j,a} \epsilon_i\epsilon_j\epsilon_a\,g({\tilde h}({\cal E}_i, {\cal E}_j), E_a) \nabla^t_{E_a} B({\cal E}_i, {\cal E}_j)
 = \Div \<{\tilde h}, B\> - \<\Div {\tilde h},\,B\>,
 \end{eqnarray*}
\begin{eqnarray*}
 && \sum\nolimits_{\,i,j,a}\epsilon_i\epsilon_j\epsilon_a\,B(\nabla^t_{{\cal E}_i}\, E_a, {\cal E}_j)\,
 g({\tilde h}({\cal E}_i, {\cal E}_j), E_a) = -\<\tilde{\cal A}^\flat +\widetilde{\cal K}^\flat/2,\ B\>,\\
 &&\sum\nolimits_{\,i,j,a} \epsilon_i\epsilon_j\epsilon_a\,g({\tilde h}({\cal E}_i, {\cal E}_j), E_a)\,
 B(\nabla^t_{E_a} {\cal E}_i, {\cal E}_j) \\
 &&\hskip10mm =-\sum\nolimits_{\,i,a}\epsilon_i\epsilon_a\,B({\tilde A}_a({\cal E}_i), A_{i}(E_a) + T^{\sharp}_{i}(E_a))
 = - \<\Lambda_{ {\tilde\alpha}, \alpha + \theta }, B\>.
\end{eqnarray*}
As an example, we give a detailed computation of the 4th line above:
\begin{eqnarray*}
&& \sum\nolimits_{\,i,j,a} \epsilon_{i}\epsilon_{j}\epsilon_{a}\,g(\tilde h({\cal E}_i, {\cal E}_j), E_a)
 \nabla^t_{{\cal E}_i} B(E_a, {\cal E}_j) \\
  \eq \sum\nolimits_{\,i,j,a} \epsilon_{i}\epsilon_{j}\epsilon_{a}\, \nabla_i ( g( {\tilde h}( {\cal E}_i , {\cal E}_j ) , E_a ) B(E_a , {\cal E}_j ) ) - \sum\nolimits_{\,i,j,a} \epsilon_{i}\epsilon_{j}\epsilon_{a}\, B(E_a , {\cal E}_j ) \nabla_i ( g( {\tilde h}( {\cal E}_i , {\cal E}_j ) , E_a ) ) \\
 \eq \sum\nolimits_{\,i,j,a} \epsilon_{i}\epsilon_{j}\epsilon_{a}\, \nabla_i ( g( B(E_a , {\cal E}_j ) {\tilde A}_a {\cal E}_j , {\cal E}_i )  )
 - \sum\nolimits_{\,i,j,a} \epsilon_{i}\epsilon_{j}\epsilon_{a}\, B(E_a , {\cal E}_j ) ( \nabla_i g( {\tilde A}_a {\cal E}_j , {\cal E}_i ) ) \\
 \eq \sum\nolimits_{\,i} \epsilon_{i}\, g( \nabla_i ( \sum\nolimits_{\,j,a} \epsilon_{j}\epsilon_{a}  B(E_a , {\cal E}_j ) {\tilde A}_a {\cal E}_j ) , {\cal E}_i )
 - \sum\nolimits_{\,j,a} \epsilon_{j}\epsilon_{a}\, B(E_a , {\cal E}_j ) g( \sum\nolimits_{\,i} \epsilon_{i}\, \nabla_i {\tilde A}_a {\cal E}_j , {\cal E}_i ) ) \\
 \eq \Div^\perp \< B , {\tilde \alpha} \> - \< B_{ | \widetilde{\cal D} \times {\cal D}} + B_{ |  {\cal D} \times \widetilde{\cal D} }  , \Div^\perp {\tilde \alpha} \> \\
 \eq \Div\<\tilde\alpha,\ B\> - \< ( \Div\tilde\alpha )_{ | \widetilde{\cal D} \times {\cal D}} + ( \Div \tilde \alpha )_{ |  {\cal D} \times \widetilde{\cal D} } ,\ B\> .
\end{eqnarray*}
Note that while ${\tilde \alpha} = {\tilde \alpha}_{ | \widetilde{\cal D} \times {\cal D}} + { \tilde \alpha }_{ |  {\cal D} \times \widetilde{\cal D} }$, its divergence $\Div {\tilde \alpha}$ may not vanish on ${\cal D} \times {\cal D}$ or $\widetilde{\cal D} \times \widetilde{\cal D}$.

\textbf{Proof of} \eqref{E-tildeH-gen}. We compute for any $X\in T_x M$, using Lemmas~\ref{prop-Ei-a} and \ref{projections},
\begin{eqnarray}\label{Eq-tildeH-1}
 && g(\dt\tilde H, X)=\sum\nolimits_i\epsilon_i\,g(\dt((\nabla^t_{{\cal E}_i}\,{\cal E}_i)^\top), X)
 =\sum\nolimits_i\epsilon_i\,g(\dt(\nabla^t_{{\cal E}_i}\,{\cal E}_i), X^\top)\\
\nonumber
 &&\hskip-8pt =\!\sum\nolimits_i\!\epsilon_i\,g(
 \nabla_{(-B^{\sharp} {\cal E}_i)^{\top}}\,{\cal E}_i
 -\nabla_{(\frac12 B^{\sharp} {\cal E}_i)^{\bot}}\,{\cal E}_i
 -\nabla_{{\cal E}_i}((B^{\sharp}{\cal E}_i)^{\top})
 -\nabla_{{\cal E}_i}((\frac12 B^{\sharp} {\cal E}_i)^{\bot})
 +(\dt\nabla^t)_{{\cal E}_i}\,{\cal E}_i, X^\top).
\end{eqnarray}
Using known formula \eqref{eq2G} for $t$-derivative of the Levi-Civita connection, see \cite{topp},
we present \eqref{Eq-tildeH-1}$_5$ (i.e. the 5-th term in $g(\cdot,  X^\top)$ of \eqref{Eq-tildeH-1})
as the sum of 4 terms (we omit summation by $i$ below)
\begin{eqnarray}\label{Eq-tildeH-2}
\nonumber
  g((\dt\nabla^t)_{{\cal E}_i}\,{\cal E}_i,\,X^\top) \eq \nabla_{{\cal E}_i}(g(B^{\sharp}{\cal E}_i, X^\top))
  -g(B^{\sharp}{\cal E}_i, \nabla_{{\cal E}_i}\,X^\top) \\
  \minus \frac12\,\nabla_{X^\top}(B({\cal E}_i,{\cal E}_i)) -B((A_i+T^\sharp_i)(X^\top), {\cal E}_i).
\end{eqnarray}
We present the term \eqref{Eq-tildeH-1}$_3$ 
as the sum of 2 terms 
\begin{equation} \label{3a3b}
 -g(\nabla_{{\cal E}_i}((B^{\sharp}{\cal E}_i)^{\top}), X^\top)
 =-g(\nabla_{{\cal E}_i}(B^{\sharp}{\cal E}_i), X^\top)
 +g(\nabla_{{\cal E}_i}((B^{\sharp}{\cal E}_i)^{\bot}), X^\top),
\end{equation}
and then rewrite the term \eqref{3a3b}$_2$ as 
\begin{eqnarray*}
 g(\nabla_{{\cal E}_i}((B^{\sharp}{\cal E}_i)^{\bot}), X^\top)
 \eq -g((B^{\sharp}{\cal E}_i)^{\bot}, \nabla_{{\cal E}_i}\,X^\top)
 =-\sum\nolimits_j\epsilon_j\,g((B^{\sharp}{\cal E}_i)^{\bot}, {\cal E}_j)\,g({\cal E}_j, \nabla_{{\cal E}_i}\,X^\top)\\
 \eq \sum\nolimits_j\epsilon_j\,B({\cal E}_i, {\cal E}_j)\,g(\nabla_{{\cal E}_i}\,{\cal E}_j, X^\top)
 =\sum\nolimits_j\epsilon_j\,B({\cal E}_i, {\cal E}_j)\,g(\tilde h({\cal E}_i, {\cal E}_j), X^\top).
\end{eqnarray*}
Note that \eqref{Eq-tildeH-2}$_1$ + \eqref{Eq-tildeH-2}$_2$ + \eqref{3a3b}$_1$ =0.
For the sum \eqref{Eq-tildeH-1}$_2$ + \eqref{Eq-tildeH-1}$_4$ 
we get
\[
 g(-\nabla_{(\frac12\,B^{\sharp} {\cal E}_i)^{\bot}}\,{\cal E}_i -\nabla_{{\cal E}_i}((\frac12\,B^{\sharp} {\cal E}_i)^{\bot}), X^\top)
 = -g(\tilde h({\cal E}_i, B^{\sharp}{\cal E}_i), X^\top).
\]
For the term 
\eqref{Eq-tildeH-1}$_1$ we get
\begin{eqnarray*}
 -\sum\nolimits_{\,i}\epsilon_i\,g(\nabla_{{\cal E}_i}(g(B^{\sharp}{\cal E}_i, X^\top)), X^\top)
 =-\sum\nolimits_{\,i,a}\epsilon_a\,g(\nabla_{E_a}\,{\cal E}_i, X^\top)\,g(B^{\sharp}{\cal E}_i, E_a)\\
 =-\sum\nolimits_{\,i,a}\epsilon_a\,g((A_i+T^\sharp_i)(E_{a}, X^\top)\,B({\cal E}_i, E_a)
 =\<\,\<\alpha+\theta, X^\top\>,\,B\>.
\end{eqnarray*}
For the term 
\eqref{Eq-tildeH-2}$_3$ we get
\[
 -
 \sum\nolimits_i\epsilon_i\,\nabla_{X^\top}(B({\cal E}_i,{\cal E}_i)) = -
 X^\top(\tr_{\cal D}B).
\]
For the term 
\eqref{Eq-tildeH-2}$_4$ we get
\begin{eqnarray*}
 &&-\sum\nolimits_i\epsilon_i\,B((A_i+T^\sharp_i)(X^\top), {\cal E}_i)
 =-\sum\nolimits_i\epsilon_i\,g((A_i+T^\sharp_i)(X^\top), B^\sharp({\cal E}_i))\\
 && = -\sum\nolimits_{i,a}\epsilon_i\epsilon_a\,g(B^\sharp({\cal E}_i), E_a)\,g((A_i+T^\sharp_i)(X^\top), E_a)\\
 && = -\sum\nolimits_{i,a}\epsilon_i\epsilon_a\,B({\cal E}_i, E_a)\,g((A_i-T^\sharp_i)(E_a), X^\top)
 =\<\,\<\alpha-\theta, X^\top\>,\,B\>.
\end{eqnarray*}
Finally we collect results: 
\eqref{Eq-tildeH-1} = \eqref{Eq-tildeH-1}$_1$ +\eqref{Eq-tildeH-1}$_2$ + \eqref{3a3b}$_1$ + \eqref{3a3b}$_2$ + \eqref{Eq-tildeH-1}$_4$  + \eqref{Eq-tildeH-2}$_1$ + \eqref{Eq-tildeH-2}$_2$ + \eqref{Eq-tildeH-2}$_3$ + \eqref{Eq-tildeH-2}$_4$
to obtain
\[
 g(\dt\tilde H, X)= \<\,2\<\theta, X^\top\>,\,B\> -\frac12\,X^\top(\tr_{\cal D}B).
\]
Let $X=\tilde H$. Using $B(\tilde H, \tilde H)=0$
and $\tilde H(\tr_{\cal D}B^\sharp)=\Div((\tr_{\cal D}B^\sharp)\tilde H) -(\tr_{\cal D}B^\sharp)\Div\tilde H$, we get
\[
 \dt g(\tilde H, \tilde H) = 2\,g(\dt\tilde H, \tilde H)
 =\<\,4\,\<\theta, \tilde H\>,\,B\> -\Div((\tr_{\cal D}B^\sharp)\tilde H) +(\tr_{\cal D}B)\Div\tilde H.
\]
Finally note that $\tr_{\cal D}B=\<g,B\>$, and that completes the proof of \eqref{E-tildeH-gen}.

\smallskip

The computations for $h$ and $H$ are easier, since $B(X,Y)=0$ for $X,Y\in \widetilde{\cal D}$.
We assume that $\nabla_Z {\cal E}_i\in\widetilde{\cal D}$ at a point $x\in M$ for all $Z \in TM$.

\noindent\textbf{Proof of} \eqref{E-h-gen}. We observe that
\begin{equation*}
 \dt\<h,h\> = \sum\nolimits_{a,b}\epsilon_a\epsilon_b\,\big(B(h(E_a, E_b), h(E_a, E_b)) +2\,g(\dt h(E_a, E_b), h(E_a, E_b))\big),
\end{equation*}
where, using Lemma~\ref{projections} and formula \eqref{eq2G} for $\dt\nabla^t$, we compute
\begin{eqnarray*}
 &&  g(\dt h(E_a, E_b), h(E_a, E_b)) =
 \frac{1}{2}\,g(\dt ((\nabla^t_{E_a}\,E_b +\nabla^t_{E_b}\,E_a)^{\perp}), h(E_a, E_b))\\
 && = \frac{1}{2}\,g(\dt(\nabla^t_{E_a}\,E_b +\nabla^t_{E_b}\,E_a), h(E_a, E_b))
 = \frac{1}{2}\,g((\dt\nabla^t)_{E_a} E_b +(\dt\nabla^t)_{E_b} E_a, h(E_a, E_b)) \\
 && =
 \frac{1}{2}\sum\nolimits_i\epsilon_i\,g(h(E_a, E_b), {\cal E}_i)
 \big(\nabla^t_{E_a} B({\cal E}_i, E_b)+ \nabla^t_{E_b} B({\cal E}_i, E_a) \\
 &&\quad -\,2\,B( h(E_a, E_b), {\cal E}_i) + B(\nabla^t_{{\cal E}_i}\,E_a, E_b) + B(\nabla^t_{{\cal E}_i}\,E_b, E_a)\big).
\end{eqnarray*}
We used in the above
\begin{eqnarray*}
 && 2\, g((\dt \nabla^t)_{E_a} E_b, {\cal E}_i) = \nabla^t_{E_a} B({\cal E}_i, E_b)+ \nabla^t_{E_b} B({\cal E}_i, E_a) \\
 &&-2 B( h(E_a, E_b), {\cal E}_i) + B(\nabla^t_{{\cal E}_i}\,E_a, E_b) + B(\nabla^t_{{\cal E}_i}\,E_b, E_a).
\end{eqnarray*}
Note that
\begin{eqnarray*}
 && \sum\nolimits_{a,b,i}\epsilon_{a}\epsilon_{b}\epsilon_{i}\, g(h (E_a, E_b), {\cal E}_i) \nabla^t_{E_a}  B(E_b, {\cal E}_i)
 = \Div \<B, \alpha\> - \<\ ( \Div \alpha )_{ | \widetilde{\cal D} \times {\cal D}} + ( \Div \alpha )_{ |  {\cal D} \times \widetilde{\cal D} } ,\ B\>,\\
 && \sum\nolimits_{a,b,i} \epsilon_{a} \epsilon_{b} \epsilon_{i}\, g(h (E_a, E_b), {\cal E}_i) B(\nabla^t_{{\cal E}_i} E_a, E_b)
 = -\<\Lambda_{ \alpha, {\tilde\alpha} + { \tilde \theta } }, B\>.
\end{eqnarray*}
Finally, we obtain \eqref{E-h-gen}:
\begin{eqnarray*}
 \dt \<h,h\> \eq \sum\nolimits_{a,b} \epsilon_a \epsilon_b B(h(E_a, E_b),h(E_a, E_b))
 + 2\Div \<B, \alpha\> - 2\< ( \Div \alpha )_{ | \widetilde{\cal D} \times {\cal D}} + ( \Div \alpha )_{ | {\cal D} \times \widetilde{\cal D} } ,\ B\> \\
 && - 2 \sum\nolimits_{a,b} \epsilon_a \epsilon_b B(h(E_a, E_b),h(E_a, E_b))
 - 2\,  \<\Lambda_{ \alpha, {\tilde\alpha} + { \tilde \theta } },\ B\>.
\end{eqnarray*}
\textbf{Proof of} \eqref{E-H-gen}. We observe that
\begin{equation*}
 \dt  g(H,H) = B(H,H) + 2\,g(\dt H, H),
\end{equation*}
where, using Lemma~\ref{projections} and formula \eqref{eq2G} for $\dt\nabla^t$, we obtain
\begin{eqnarray*}
 && g(\dt H, H) = \sum\nolimits_a\epsilon_a g(\dt((\nabla^t_{E_a} E_a)^{\perp}), H)
 =\sum\nolimits_a\epsilon_a g((\dt\nabla^t)_{E_a} E_a, H) \\
 && = - B(H,H) +\sum\nolimits_a\epsilon_a B(\nabla^t_H E_a, E_a)
 +\sum\nolimits_{a,i}\epsilon_a\epsilon_i\,g(H, {\cal E}_i)\,\nabla^t_{E_a} B(E_a, {\cal E}_i).
\end{eqnarray*}
We have
\begin{eqnarray*}
 \sum\nolimits_a\epsilon_a\,B(\nabla^t_H E_a, E_a) \eq \<\,\<{\tilde\theta} - {\tilde\alpha}, H\>,\ B\>,\\
 \sum\nolimits_{a,i}\epsilon_a\epsilon_i\,g(H, {\cal E}_i)\,\nabla^t_{E_a} B(E_a, {\cal E}_i)
 \eq \Div(B^{\sharp} H)^{\top} + B(H, {\tilde H}) -\<{\tilde \delta}_H,\ B\>.
\end{eqnarray*}
Hence,
\begin{equation*}
 \dt g(H,H) = -B(H,H) + 2\,\big(\<\,\< \tilde\theta - {\tilde\alpha}, H\>\,, B\> + \Div (B^{\sharp} H)^{\top}
 + B(H, {\tilde H}) -\<{\tilde \delta}_H,\ B\>\big).
\end{equation*}
Finally, using $B(H, {\tilde H}) = \<{\rm Sym}(H^{\flat} \otimes {\tilde H}^{\flat}),\ B\>$, we obtain \eqref{E-H-gen}.

\noindent\textbf{Proof of} \eqref{E-tildeT-gen}. We compute
\begin{eqnarray*}
 \dt\<\tilde T, \tilde T\> \eq
 \sum\nolimits_{i,j}\epsilon_i\epsilon_j\,\dt g({\tilde T}({\cal E}_i, {\cal E}_j), {\tilde T} ({\cal E}_i, {\cal E}_j)) \\
 \eq \sum\nolimits_{i,j}\epsilon_i\epsilon_j\,\big(
 \dt B({\tilde T}({\cal E}_i, {\cal E}_j), {\tilde T} ({\cal E}_i, {\cal E}_j))
 +2\,g(\dt{\tilde T}({\cal E}_i, {\cal E}_j), {\tilde T} ({\cal E}_i, {\cal E}_j))
 \big).
\end{eqnarray*}
For the last term of the above, by symmetry $(\dt\nabla)_{{\cal E}_i}\,{\cal E}_j=(\dt\nabla)_{{\cal E}_j}\,{\cal E}_i$ and omitting sum,
we get
\begin{eqnarray} \label{tildeTcomp}
 && 2\,g(\dt{\tilde T}({\cal E}_i, {\cal E}_j), {\tilde T} ({\cal E}_i, {\cal E}_j)) =
 g(\dt((\nabla^t_{{\cal E}_i}\,{\cal E}_j - \nabla^t_{{\cal E}_j}\,{\cal E}_i)^{\top}), {\tilde T}({\cal E}_i, {\cal E}_j)) \nonumber \\
 && =g(\dt(\nabla^t_{{\cal E}_i}\,{\cal E}_j - \nabla^t_{{\cal E}_j}\,{\cal E}_i), {\tilde T}({\cal E}_i, {\cal E}_j))
 +g(B^\sharp((\nabla^t_{{\cal E}_i}\,{\cal E}_j - \nabla^t_{{\cal E}_j}\,{\cal E}_i)^{\bot}), {\tilde T}({\cal E}_i, {\cal E}_j)) \nonumber \\
 &&= g( \nabla_{(\dt{\cal E}_i)^\top}\,{\cal E}_j +\nabla_{(\dt{\cal E}_i)^\bot}\,{\cal E}_j
 +\nabla_{{\cal E}_i}\,((\dt{\cal E}_j)^\top) +\nabla_{{\cal E}_i}\,((\dt{\cal E}_j)^\bot) \nonumber \\
 && -\nabla_{(\dt{\cal E}_j)^\top}\,{\cal E}_i -\nabla_{(\dt{\cal E}_j)^\bot}\,{\cal E}_i
 -\nabla_{{\cal E}_j}\,((\dt{\cal E}_i)^\top) -\nabla_{{\cal E}_j}\,((\dt{\cal E}_i)^\bot)
 ,\,{\tilde T} ({\cal E}_i, {\cal E}_j)).
\end{eqnarray}
We will compute 8 terms in \eqref{tildeTcomp}
separately.
First we calculate
\begin{eqnarray*}
 && g(\tilde T((\dt{\cal E}_i)^\bot,{\cal E}_j),X^\top)
 =-\frac12\,g(\tilde T((B^\sharp{\cal E}_i)^\bot,{\cal E}_j),X^\top)\\
 && = -\frac12\sum\nolimits_{a}\epsilon_a\,
 g(\tilde T((B^\sharp{\cal E}_i)^\bot,E_a)\,g(E_a,X^\top)
 =-\frac12\sum\nolimits_{a}\epsilon_a\,
 g(\tilde T^\sharp_a(B^\sharp{\cal E}_i),{\cal E}_j) \,g(E_a,X^\top).
\end{eqnarray*}
Then, assuming $X=\tilde T({\cal E}_i,{\cal E}_j)$,
we find the sum \eqref{tildeTcomp}$_2$ + \eqref{tildeTcomp}$_8$: 
\begin{eqnarray*}
 &&2\sum\nolimits_{i,j,a}\epsilon_i\epsilon_j\epsilon_a\,
 (-\frac12)\,g(\tilde T^\sharp_a(B^\sharp{\cal E}_i),{\cal E}_j)
 \,g(E_a,\tilde T({\cal E}_i,{\cal E}_j))\\
 && =\sum\nolimits_{i,j,a}\epsilon_i\epsilon_j\epsilon_a\,
 g(\tilde T^\sharp_a{\cal E}_j,B^\sharp{\cal E}_i)\,
 g(\tilde T^\sharp_a{\cal E}_i,B^\sharp{\cal E}_j) \\
 &&=\sum\nolimits_{i,j,a,k}\epsilon_i\epsilon_j\epsilon_a\epsilon_k\,
 g(\tilde T^\sharp_a{\cal E}_j,{\cal E}_k)\,B({\cal E}_j,{\cal E}_k)\,g(\tilde T^\sharp_a{\cal E}_i,{\cal E}_j)\\
 &&=-\sum\nolimits_{i,a,k}\epsilon_i\epsilon_a\epsilon_k\,
 g(\tilde T^\sharp_a{\cal E}_k,\tilde T^\sharp_a{\cal E}_i)
 \,B({\cal E}_j,{\cal E}_k)
 =\sum\nolimits_{i,a,k}\epsilon_i\epsilon_a\epsilon_k\,
 g((\tilde T^\sharp_a)^2{\cal E}_k,{\cal E}_i)\,B({\cal E}_i,{\cal E}_k).
\end{eqnarray*}
For \eqref{tildeTcomp}$_4$ + \eqref{tildeTcomp}$_6$ 
we have the same, thus \eqref{tildeTcomp}$_2$ +\eqref{tildeTcomp}$_4$ +\eqref{tildeTcomp}$_6$ +\eqref{tildeTcomp}$_8$ = $2\,\<\tilde{\cal T},\, B\>$.
For \eqref{tildeTcomp}$_1$, which is equal to \eqref{tildeTcomp}$_5$,
we have
\begin{eqnarray*}
 &&-\sum\nolimits_{i,j}\epsilon_i\epsilon_j\,
 g(\nabla_{(B^\sharp{\cal E}_i)^\top} {\cal E}_j,\tilde T({\cal E}_i,{\cal E}_j))
 =-\sum\nolimits_{i,j,a}\epsilon_i\epsilon_j\epsilon_a\,
 g(\nabla_{E_a}{\cal E}_j, \tilde T({\cal E}_i,{\cal E}_j))
 B({\cal E}_i,E_a)\\
 &&=\sum\nolimits_{i,j,a}\epsilon_i\epsilon_j\epsilon_a\,
 g((A_j+T^\sharp_j)E_a,\tilde T({\cal E}_i,{\cal E}_j))\,B({\cal E}_i,E_a)\\
 &&=\sum\nolimits_{i,j,a,b}\epsilon_i\epsilon_j\epsilon_a\epsilon_b\,
 g((A_j+T^\sharp_j)E_a,E_b)\,
 g(\tilde T^\sharp_b{\cal E}_i,{\cal E}_j)\,B({\cal E}_i,E_a)\\
 &&=-\sum\nolimits_{i,j,b}\epsilon_i\epsilon_j\epsilon_b\,
 B({\cal E}_i,(A_j-T^\sharp_j)E_b)\,
 g(\tilde T^\sharp_b{\cal E}_j,{\cal E}_i)\\
 &&=-\sum\nolimits_{j,b}\epsilon_j\epsilon_b\,
 B(\tilde T^\sharp_b{\cal E}_j,(A_j-T^\sharp_j)E_b)
 = \<\Lambda_{\tilde\theta,\theta-\alpha},\,B\>.
\end{eqnarray*}
Thus, 
\eqref{tildeTcomp}$_1$ + \eqref{tildeTcomp}$_5$= $\<2\,\Lambda_{\tilde\theta,\theta-\alpha},\,B\>$.
For the term \eqref{tildeTcomp}$_3$
we have
\begin{eqnarray*}
 &&\hskip-5pt\sum\nolimits_{i,j}\epsilon_i\epsilon_j\,g(\nabla_{{\cal E}_i}((-B^\sharp{\cal E}_j)^\top),{\tilde T}({\cal E}_i, {\cal E}_j))
 =\sum\nolimits_{i,j,a}\epsilon_i\epsilon_j\epsilon_a\,
 g(\nabla_{{\cal E}_i}((-B^\sharp{\cal E}_j)^\top),E_a)\,g({\tilde T}({\cal E}_i, {\cal E}_j),E_a)\\
 && =-\sum\nolimits_{i,j,a}\epsilon_i\epsilon_j\epsilon_a\,\big(
 \nabla_{{\cal E}_i}\,g((B^\sharp{\cal E}_j)^\top,E_a)
 -g((B^\sharp{\cal E}_j)^\top,\nabla_{{\cal E}_i}\,E_a)\big)
 g({\tilde T}^\sharp_a({\cal E}_i), {\cal E}_j)\\
 && =-\sum\nolimits_{i,j,a}\epsilon_i\epsilon_j\epsilon_a\,\big(
  \nabla_{{\cal E}_i}\,(g(B({\cal E}_j,E_a)(-\tilde T^\sharp_a{\cal E}_j),{\cal E}_i))
  -B({\cal E}_j,E_a)\nabla_{{\cal E}_i}\,g(-\tilde T^\sharp_a{\cal E}_j,{\cal E}_i)\big) \\
 &&= \Div^\bot\<\tilde\theta,B\>
 -\sum\nolimits_{i,j,a}\epsilon_i\epsilon_j\epsilon_a\,
 B({\cal E}_j,E_a)\nabla_{{\cal E}_i}\,g(\tilde T^\sharp_a{\cal E}_j,{\cal E}_i)\\
 && = \Div^\bot\<\tilde\theta,B\> -\<\ ( \Div^\bot\tilde\theta )_{ | \widetilde{\cal D} \times {\cal D}} + ( \Div^\bot \tilde \theta )_{ |  {\cal D} \times \widetilde{\cal D} } ,B\> \\
&& = \Div\<\tilde\theta,B\> +\<\,\<\tilde\theta,B\>,H\>
 -\< ( \Div \tilde\theta )_{ | \widetilde{\cal D} \times {\cal D}} + ( \Div \tilde \theta )_{ |  {\cal D} \times \widetilde{\cal D} } ,B\> -\<\,\<\tilde\theta,B\>,H\>.
\end{eqnarray*}
Thus, 
\eqref{tildeTcomp}$_3$+ \eqref{tildeTcomp}$_7$ = $2\,\Div\<\tilde\theta,B\>-2\,\< ( \Div\tilde\theta )_{ | \widetilde{\cal D} \times {\cal D}} + ( \Div \tilde \theta )_{ |  {\cal D} \times \widetilde{\cal D} } ,\,B\>$.
Using the above, we obtain \eqref{E-tildeT-gen}.

\noindent\textbf{Proof of} \eqref{E-T-gen}. We calculate using Lemma~\ref{projections},
\begin{eqnarray*}
 && g(\dt T(E_a, E_b), X^\bot) = \frac{1}{2}\, g(\dt ((\nabla^t_{E_a} E_b - \nabla^t_{E_b} E_a)^\bot), X^{\perp})\\
 && = \frac{1}{2}\,g(\dt(\nabla^t_{E_a} E_b - \nabla^t_{E_b} E_a), X^{\perp})
 -\frac{1}{2}\, g(B^{\sharp} ((\nabla^t_{E_a} E_b - \nabla^t_{E_b} E_a )^{\perp}), (X^\perp)^{\top}) = 0.
\end{eqnarray*}
Then we obtain \eqref{E-T-gen}:
\begin{eqnarray*}
 \dt\<T,T\> \eq \dt\sum\nolimits_{a,b}\epsilon_a\epsilon_b\,g(T(E_a, E_b), T(E_a, E_b)) \\
 \eq
 \sum\nolimits_{a,b}\epsilon_a\epsilon_b\,\big(B(T(E_a, E_b), T(E_a, E_b)) + 2\,g(\dt T(E_a, E_b), T(E_a, E_b))\big)  \\
 \eq  \sum\nolimits_{a,b}\epsilon_a\epsilon_b\,B(T(E_a, E_b), T(E_a, E_b)) = -\<\Phi_T,\,B\>.
\end{eqnarray*}
This completes the proof.
\end{proof}

\begin{cor}\label{L-H2h2-D}
For  $g^\perp$-variations we have
\begin{subequations}
\begin{eqnarray}\label{E-h2T2-D1}
\nonumber
 \dt\widetilde{\Sm}_{\,\rm ex} \eq
 \<(\Div {\tilde H})\,g^{\perp} + 4 \<\theta,\ {\tilde H}\> - \Div {\tilde h}
 +4\,\Lambda_{{\tilde\alpha}, \theta }
 -\widetilde{\cal K}^\flat,\ B\> \nonumber \\
 && +\, \Div (\<{\tilde h}, B\> -(\tr_{\cal D} B) {\tilde H}),\\
\label{E-h2T2-D1b}
 \dt {\Sm}_{\,\rm ex} \eq \<-\Phi_{h} + 2 \<{\tilde\theta} - {\tilde\alpha},\ H\> + 2\,H^{\flat}\otimes {\tilde H}^{\flat}
 - 2\,{\tilde\delta}_{H} + 2 ( \Div \alpha )_{\widetilde{\cal D} \times {\cal D}} + 2 ( \Div \alpha )_{ |  {\cal D} \times \widetilde{\cal D} } \nonumber \\
 && +\, 2\, \Lambda_{\alpha, {\tilde\alpha} + {\tilde\theta}},\ B\>  +2\Div\big((B^{\sharp} H)^{\top} - \<\alpha, B\> \big) .
\end{eqnarray}
\end{subequations}
\end{cor}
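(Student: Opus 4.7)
The plan is to derive Corollary \ref{L-H2h2-D} as an immediate bookkeeping consequence of Proposition \ref{propvar1}, exploiting the algebraic definitions
\[
 \widetilde{\Sm}_{\rm ex}=g(\tilde H,\tilde H)-\langle\tilde h,\tilde h\rangle,\qquad
 \Sm_{\rm ex}=g(H,H)-\langle h,h\rangle.
\]
Since the right-hand sides depend on $g$ only through $H,\tilde H,h,\tilde h$ and the metric pairings among them, and since $g_t$ is a $g^\perp$-variation throughout, we are in exactly the setting of Proposition \ref{propvar1}. Consequently, differentiating in $t$ and using linearity of $\partial_t$, one obtains
\[
 \partial_t\widetilde{\Sm}_{\rm ex}=\partial_t g(\tilde H,\tilde H)-\partial_t\langle\tilde h,\tilde h\rangle,\qquad
 \partial_t\Sm_{\rm ex}=\partial_t g(H,H)-\partial_t\langle h,h\rangle,
\]
so the proof consists simply of substituting \eqref{E-tildeH-gen} and \eqref{E-tildeh-gen} into the first identity, and substituting \eqref{E-H-gen} and \eqref{E-h-gen} into the second.

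For \eqref{E-h2T2-D1}, I would collect the pointwise tensorial terms paired with $B$ from \eqref{E-tildeH-gen} minus those from \eqref{E-tildeh-gen}, which reassembles into $\langle (\Div\tilde H)g^\perp+4\langle\theta,\tilde H\rangle-\Div\tilde h+4\Lambda_{\tilde\alpha,\theta}-\widetilde{\mathcal K}^\flat,B\rangle$; the divergence terms $-\Div((\tr_{\cal D}B^\sharp)\tilde H)$ from \eqref{E-tildeH-gen} and $+\Div\langle\tilde h,B\rangle$ from \eqref{E-tildeh-gen} combine as shown. No cancellation of non-divergence terms is needed beyond sign tracking.

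For \eqref{E-h2T2-D1b}, the slightly more delicate step is the cancellation of the two explicit $B(H,H)$ terms: \eqref{E-H-gen} carries a $-B(H,H)$, and \eqref{E-h-gen} also carries a $-B(H,H)$, so in the difference $\partial_t g(H,H)-\partial_t\langle h,h\rangle$ they annihilate. The remaining pointwise tensorial terms reassemble into the bracket displayed in \eqref{E-h2T2-D1b}, with the $-\Phi_h$ contribution arising from the $\Phi_h$-term of \eqref{E-h-gen} flipped in sign. The divergences $+2\Div(B^\sharp H)^\top$ from \eqref{E-H-gen} and $-2\Div\langle\alpha,B\rangle$ from \eqref{E-h-gen} combine into the single divergence $2\Div((B^\sharp H)^\top-\langle\alpha,B\rangle)$.

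I do not expect any substantial obstacle: the entire proof is a careful transcription of the two differences of variation formulas. The only thing one must watch is the identification and grouping of symmetric $(0,2)$-tensors paired with $B$ (so that the result is presented as $\langle\,\cdot\,,B\rangle$ plus divergences), and the sign of the $\Phi_h$-summand, which is the sole nontrivial combinatorial point.
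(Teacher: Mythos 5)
Your approach is exactly the paper's: the proof there is the single sentence that \eqref{E-h2T2-D1} follows from \eqref{E-tildeh-gen} and \eqref{E-tildeH-gen}, and \eqref{E-h2T2-D1b} from \eqref{E-h-gen} and \eqref{E-H-gen}, i.e.\ one differentiates $\widetilde{\Sm}_{\,\rm ex}=g(\tilde H,\tilde H)-\<\tilde h,\tilde h\>$ and $\Sm_{\,\rm ex}=g(H,H)-\<h,h\>$ and substitutes. Your handling of \eqref{E-h2T2-D1}, of the cancellation of the two $B(H,H)$ terms, and of the divergence terms is correct.

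However, the one point you yourself single out as ``the sole nontrivial combinatorial point'' --- the coefficient of $\Phi_h$ --- is precisely where your bookkeeping does not go through as written. In \eqref{E-h-gen} as printed, $\Phi_h$ sits inside the bracket $-2\,\<\,\cdots+\Phi_h,\,B\>$, so flipping its sign in the difference $\dt g(H,H)-\dt\<h,h\>$ produces $+2\,\<\Phi_h,B\>$, not the $-\Phi_h$ appearing in \eqref{E-h2T2-D1b}. The discrepancy lies not in your method but in the source formula: the paper's own derivation of \eqref{E-h-gen} ends with the net term $-\sum_{a,b}\epsilon_a\epsilon_b\,B(h(E_a,E_b),h(E_a,E_b))$, which by the definition of $\Phi_h$ equals $\<\Phi_h,B\>-B(H,H)$, i.e.\ $\Phi_h$ should enter \eqref{E-h-gen} with coefficient $+1$ and outside the $-2\<\cdot,B\>$ bracket. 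With that corrected reading your subtraction yields exactly the $-\Phi_h$ of \eqref{E-h2T2-D1b}, which is also the coefficient used downstream in \eqref{E-Sc-var1a}. Your write-up silently assumes the corrected version of \eqref{E-h-gen}; you should flag and resolve this inconsistency rather than assert that the printed terms ``reassemble'' into the stated bracket.
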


\begin{proof} Formula \eqref{E-h2T2-D1} follows from  \eqref{E-tildeh-gen} and \eqref{E-tildeH-gen},
and \eqref{E-h2T2-D1b} follows from  \eqref{E-h-gen} and \eqref{E-H-gen}.
\end{proof}

Similarly as Lemma \ref{prop-Ei-a}, one can prove the following
\begin{lem} \label{framegenvar}
Let $\{E_a,\,{\cal E}_{i}\}$ be a local $(\widetilde{\cal D},\,{\cal D})$-adapted and $g$-orthonormal frame. For any variation $g_t$ the frame evolving according to equations:
\begin{eqnarray*}
&& E_a(0) = E_a , \quad \dt E_a = - \frac{1}{2} (B^\sharp E_a )^\top, \\
&& {\cal E}_i (0) = {\cal E}_i , \quad \dt {\cal E}_i = - (B^\sharp {\cal E}_i )^\top - \frac{1}{2} (B^\sharp {\cal E}_i )^\perp,
\end{eqnarray*}
where $B = \dt g_t$, remains an orthonormal frame adapted to $\widetilde{\cal D}$ and ${\cal D}(t)$.

For any ${g^\top}$-variation the frame evolving according to equations:
\begin{eqnarray*}
&& E_a(0) = E_a , \quad \dt E_a = - \frac{1}{2} (B^\sharp E_a )^\top, \\
&& {\cal E}_i (0) = {\cal E}_i , \quad \dt {\cal E}_i = 0,
\end{eqnarray*}
where $B = \dt g_t$, remains an orthonormal frame adapted to $\widetilde{\cal D}$ and $\cal D$.
\end{lem}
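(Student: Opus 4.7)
The plan is to follow the pattern of Lemma~\ref{prop-Ei-a} exactly, verifying for each pair of frame vectors $(X,Y)\in\{(E_a,E_b),\,(E_a,{\cal E}_i),\,({\cal E}_i,{\cal E}_j)\}$ that $\partial_t(g_t(X(t),Y(t)))=0$ whenever the orthonormality and adaptedness conditions already hold at time $t$. Since the prescribed evolutions for $E_a(t), {\cal E}_i(t)$ are ODEs with given smooth right-hand sides, and the ``orthonormal and adapted'' configuration is infinitesimally preserved, the initial conditions at $t=0$ propagate to all $t$ by uniqueness. Observe immediately that $E_a(t)\in\widetilde{\cal D}$ for all $t$, since $\partial_t E_a=-\tfrac12(B^\sharp E_a)^\top$ lies in $\widetilde{\cal D}$.

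For the general variation case, I would use the product rule $\partial_t(g_t(X,Y))=B(X,Y)+g_t(\partial_t X,Y)+g_t(X,\partial_t Y)$ together with the two key identities $g_t(B^\sharp Z,W)=B(Z,W)$ and $g_t((B^\sharp Z)^\top,W)=g_t(B^\sharp Z,W)$ when $W\in\widetilde{\cal D}$ (with an analogous identity for $^\perp$ and ${\cal D}(t)$). For the pair $(E_a,E_b)$ this reduces $\partial_t g_t(E_a,E_b)$ to $B(E_a,E_b)-\tfrac12 B(E_a,E_b)-\tfrac12 B(E_b,E_a)=0$ by symmetry of $B$. For $(E_a,{\cal E}_i)$, the term from $\partial_t E_a$ vanishes because $(B^\sharp E_a)^\top\in\widetilde{\cal D}$ is $g_t$-orthogonal to ${\cal E}_i\in{\cal D}(t)$, while the term from $\partial_t{\cal E}_i$ produces precisely $-B(E_a,{\cal E}_i)$, canceling the direct contribution. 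For $({\cal E}_i,{\cal E}_j)$ the computation mirrors that for $(E_a,E_b)$, again using symmetry of $B$.

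For the ${g^\top}$-variation the computation simplifies considerably: by definition $B$ vanishes outside $\widetilde{\cal D}\times\widetilde{\cal D}$ and the two distributions remain $g_t$-orthogonal, so ${\cal D}(t)={\cal D}$. Since ${\cal E}_i$ is constant and $g_t$ agrees with $g$ on ${\cal D}$, orthonormality of the ${\cal E}_i$'s is automatic. The inner product $g_t(E_a,{\cal E}_i)$ has derivative $-\tfrac12 g_t((B^\sharp E_a)^\top,{\cal E}_i)=0$ by $g_t$-orthogonality of $\widetilde{\cal D}$ and ${\cal D}$, and $\partial_t g_t(E_a,E_b)$ vanishes by the same symmetry cancellation as above.

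The main obstacle is purely bookkeeping: one must track carefully that $^\top$ and $^\perp$ denote $g_t$-projections rather than $g_0$-projections, and that ${\cal D}(t)$ in the general case moves with $t$. Once this is done, each step is a direct adaptation of the corresponding manipulation in Lemma~\ref{prop-Ei-a}, and no new structural ideas are needed.
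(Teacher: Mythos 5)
Your proposal is correct and follows exactly the route the paper intends: the paper gives no separate proof of this lemma, stating only that it is proved "similarly as Lemma~\ref{prop-Ei-a}", and your verification of $\partial_t(g_t(X,Y))=0$ for the three types of pairs, using $g_t(B^\sharp Z,W)=B(Z,W)$, the symmetry of $B$, and the $g_t$-orthogonality of $\widetilde{\cal D}$ and ${\cal D}(t)$, is precisely that adaptation. Your explicit remarks that $^\top,\,^\perp$ are $g_t$-projections and that the initial conditions propagate by uniqueness of the ODE are, if anything, slightly more careful than the paper's own treatment.
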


Lemma \ref{projections} remains true without any changes for both $g_t$- and ${g^\top}$-variations, and Proposition \ref{propvar1} has the following analogue.


\begin{prop} \label{propvar2} For  ${g^\top}$-variations we have
\begin{subequations}
\begin{eqnarray}\label{E-tildeh-gen2}
 &&\hskip-9mm
 \dt\<{\tilde h}, {\tilde h}\> = \< \Phi_{\tilde h} ,B \> - B({\tilde H},{\tilde H}),\\
\label{E-tildeH-gen2}
 &&\hskip-9mm \dt g({\tilde H}, {\tilde H}) = -B({\tilde H},{\tilde H}) 
 ,\quad\\
\label{E-h-gen2}
 &&\hskip-9mm \dt \<h,h\> = \<\Div{ h} + {\cal K}^\flat,\, B\> - \Div\< h, B\>, \\
\label{E-H-gen2}
 &&\hskip-9mm \dt g(H, H) =  \<\,(\Div{ H})\,{{g^\top}} \, B \> -\Div( (\tr_{\widetilde{\cal D}} B^\sharp) H ),\\
\label{E-tildeT-gen2}
 &&\hskip-9mm \dt\<{\tilde T}, {\tilde T}\> = - \<\Phi_{\tilde T}, B\> ,\\
\label{E-T-gen2}
 &&\hskip-9mm \dt\<T, T\> = 2\,\<\, {\cal T}^\flat ,\, B\,\>.
\end{eqnarray}
\end{subequations}
\end{prop}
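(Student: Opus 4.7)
The proof will mirror that of Proposition \ref{propvar1}, but is drastically simplified by two structural facts specific to ${g^\top}$-variations. First, the condition \eqref{e:Vartop} together with the preserved orthogonality of $\widetilde{\cal D}$ and ${\cal D}$ forces $B = \dt g_t$ to be supported entirely on $\widetilde{\cal D}\times\widetilde{\cal D}$, so $B(X,Y) = B(X^\top, Y^\top)$ and consequently $(\nabla_Z B)({\cal E}_i, {\cal E}_j) = 0$ for every $Z\in TM$ (every term in the Leibniz expansion has at least one ${\cal D}$-argument of $B$). Second, Lemma \ref{framegenvar} supplies an adapted orthonormal frame with $\dt{\cal E}_i = 0$, so on the ${\cal D}$-side the frame-evolution corrections from Lemma \ref{projections} vanish. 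I split the six identities into two groups.

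For the $\widetilde{\cal D}$-valued tensors in \eqref{E-tildeh-gen2}, \eqref{E-tildeH-gen2}, \eqref{E-tildeT-gen2}, applying \eqref{eq2G} together with the vanishing of $(\nabla_X B)({\cal E}_i, {\cal E}_j)$ yields the key identity
\[
 g((\dt\nabla^t)_{{\cal E}_i}{\cal E}_j, X) = -B(\tilde h({\cal E}_i, {\cal E}_j), X),\qquad X\in\widetilde{\cal D}.
\]
Hence $g(\dt\tilde h_{ij}, \tilde h_{ij}) = -B(\tilde h_{ij}, \tilde h_{ij})$, and combined with the metric-variation contribution $[\dt g](\tilde h_{ij}, \tilde h_{ij}) = B(\tilde h_{ij}, \tilde h_{ij})$ together with the analogue $\<\Phi_{\tilde h}, B\> = B(\tilde H, \tilde H) - \sum_{i,j}\epsilon_i\epsilon_j B(\tilde h_{ij}, \tilde h_{ij})$ of the definition of $\Phi_h$, this gives \eqref{E-tildeh-gen2}. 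Tracing against $i=j$ produces \eqref{E-tildeH-gen2}. For \eqref{E-tildeT-gen2}, antisymmetrizing in $i, j$ kills the symmetric right-hand side $-B(\tilde h_{ij}, X)$, so $g(\dt\tilde T_{ij}, \tilde T_{ij}) = 0$ and only the metric-variation $B(\tilde T_{ij}, \tilde T_{ij})$ survives, giving $-\<\Phi_{\tilde T}, B\>$.

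For the ${\cal D}$-valued tensors in \eqref{E-h-gen2}, \eqref{E-H-gen2}, \eqref{E-T-gen2}, all direct $B(\cdot, \cdot)$ contributions vanish, and the change comes entirely from $\dt E_a = -\tfrac12(B^\sharp E_a)^\top$ and from $\dt\nabla^t$. I will expand $h_{ab} = \tfrac12(\nabla^t_{E_a} E_b + \nabla^t_{E_b} E_a)^\perp$, apply \eqref{eq2G}, and integrate by parts via \eqref{E-divN}--\eqref{E-divP}, following the template of the derivation of \eqref{E-h-gen} in Proposition \ref{propvar1}. The simplifying feature is that all cross $\Lambda$-type contributions (for example $\Lambda_{\alpha, \tilde\alpha + \tilde\theta}$, $\tilde\delta_H$, and so on) drop out because $B$ has no ${\cal D}$-components. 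The $\nabla B$ terms pair with $h$ to produce $\<\Div h, B\>$ plus the total divergence $\Div\<h, B\>$, while the residual contrast between $B(\nabla_{{\cal E}_i} E_a, E_b)$ and $B(\nabla_{E_a} {\cal E}_i, E_b)$ assembles, exactly as $\widetilde{\cal K}^\flat$ did in the proof of \eqref{E-tildeh-gen}, into the commutator $\sum_i\epsilon_i[T^\sharp_i, A_i]^\flat = {\cal K}^\flat$, yielding \eqref{E-h-gen2}. Tracing $h \to H$ then produces \eqref{E-H-gen2}, with $(\Div H){g^\top}$ arising from the trace of $\Div h$ and $\Div((\tr_{\widetilde{\cal D}}B^\sharp)H)$ absorbing the remaining boundary term. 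For \eqref{E-T-gen2}, the integrability tensor $T$ is metric-independent on fixed $\widetilde{\cal D}$-inputs, so $\dt T_{ab} = T(\dt E_a, E_b) + T(E_a, \dt E_b)$; contracting against $T_{ab}$ and unpacking $\dt E_a = -\tfrac12(B^\sharp E_a)^\top$, then reindexing and using the symmetry of $B$, yields exactly $2\<{\cal T}^\flat, B\>$.

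The main obstacle will be the bookkeeping in the ${\cal D}$-valued group: the careful extraction of the commutator tensor ${\cal K}^\flat$ from the $\dt E_a$-contributions to $\dt h_{ab}$, and the sign and symmetry verifications needed to confirm that all cross-terms present in Proposition \ref{propvar1} drop out here under the present support restriction on $B$.
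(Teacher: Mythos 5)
Your proof is correct in substance but follows a genuinely different route from the paper's. The paper disposes of Proposition \ref{propvar2} in two lines: comparing Lemma \ref{prop-Ei-a} with Lemma \ref{framegenvar} shows that the ${g^\top}$-situation is exactly the $\widetilde{\cal D}\leftrightarrow{\cal D}$ dual of the $g^\perp$-situation, so one takes the formulas dual to \eqref{E-tildeh-gen}--\eqref{E-T-gen} and inserts $B=B_{|\widetilde{\cal D}\times\widetilde{\cal D}}$; every mixed-component tensor in the dualized formulas (e.g.\ $\Lambda_{\alpha,\tilde\theta}$, $\<\tilde\theta,H\>$, $(\Div\alpha)_{|\widetilde{\cal D}\times{\cal D}}$, the $\<\alpha,B\>$-type divergences) then pairs to zero with $B$, and only the terms displayed in \eqref{E-tildeh-gen2}--\eqref{E-T-gen2} survive. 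You instead redo the computation directly, which the paper explicitly notes is also possible. Your two structural observations --- that $B$ is supported on $\widetilde{\cal D}\times\widetilde{\cal D}$, hence $(\nabla_Z B)({\cal E}_i,{\cal E}_j)=0$, and that the ${\cal D}$-frame is $t$-independent --- are precisely the reasons the dual formulas collapse; your key identity $g((\dt\nabla^t)_{{\cal E}_i}{\cal E}_j,X)=-B(\tilde h({\cal E}_i,{\cal E}_j),X)$ for $X\in\widetilde{\cal D}$ is correct and immediately yields \eqref{E-tildeh-gen2}, \eqref{E-tildeH-gen2} and \eqref{E-tildeT-gen2}, and the computation you sketch for \eqref{E-T-gen2} checks out (it reproduces $2\<{\cal T}^\flat,B\>$ via the antisymmetry of $T^\sharp_i$). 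The trade-off is clear: your route is self-contained and makes transparent which terms vanish and why, at the cost of repeating the bookkeeping for \eqref{E-h-gen2} and \eqref{E-H-gen2} (the extraction of ${\cal K}^\flat$ and the integrations by parts producing $\Div\<h,B\>$ and $\Div((\tr_{\widetilde{\cal D}}B^\sharp)H)$), which you correctly flag as the remaining work; the paper's duality argument gets all six identities for free from Proposition \ref{propvar1}. One small imprecision to fix: \eqref{E-H-gen2} is not literally the trace of \eqref{E-h-gen2}, since $g(H,H)$ is not the trace of $\<h,h\>$; rather one repeats, in the dual setting, the shorter computation used for \eqref{E-tildeH-gen}, where the only surviving contribution is $-H(\tr_{\widetilde{\cal D}}B)=(\tr_{\widetilde{\cal D}}B)\,\Div H-\Div((\tr_{\widetilde{\cal D}}B^\sharp)H)$.
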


\begin{proof}
One can prove the above analogue of Proposition \ref{propvar1} by direct computations, but careful comparison of Lemmas \ref{prop-Ei-a} and \ref{framegenvar} indicates that it is enough to take formulas dual (with respect to interchanging $\widetilde{\cal D}$ and ${\cal D}$) to \eqref{E-tildeh-gen} -- \eqref{E-T-gen}, and assume in them that $B = B_{ | \widetilde{\cal D} \times \widetilde{\cal D} }$.
\end{proof}

\begin{rem} \rm
Similarly, one can prove that variation formulas for general variations $g_t$ are sums of the corresponding formulas from Proposition \ref{propvar1} and Proposition \ref{propvar2}. This follows from the fact that every infinitesimal variation of $g$ can be decomposed into the sum of infinitesimal $g^\perp$- and ${g^\top}$-variations.
\end{rem}

\subsection{Euler-Lagrange equations}
\label{subsec:EU-critical}

In this section we present the Euler-Lagrange equations for the action \eqref{E-Jmix}.

We shall consider two different kinds of variations of metric. For arbitrary variations of the metric, the Euler-Lagrange equation is simply a condition for vanishing of the gradient of the functional: $\delta J_{\rm mix,\widetilde{\cal D},\Omega}(g)=0$, where
 \[
 \dt J_{\rm mix,\widetilde{\cal D},\Omega} = \int_{\Omega} \< \delta J_{\rm mix,\widetilde{\cal D},\Omega} , B \> {\rm d} \vol_g
 \]
for any variation $g_t$ with $B = \dt g_t$. In analogue to the Einstein-Hilbert action, one can also consider variations preserving the volume of $\Omega$. Then, the Euler-Lagrange equation has the following form: $\delta J_{\rm mix,\widetilde{\cal D},\Omega}(g)=\lambda g$, where $\lambda \in C^\infty(M)$ is an arbitrary function \cite{besse}. Unfortunately, in our case the functional $J_{\rm mix,\widetilde{\cal D},\Omega}$ is not a Riemannian functional (i.e. it is not invariant under all diffeomorphisms of $M$), hence $\lambda$ will not be, in general, a constant \cite{besse}.

There is however a natural way to obtain a non-homogeneous Euler-Lagrange equation with a more specific right-hand side. In order to do so, for a fixed domain of integration $\Omega$ in \eqref{E-Jmix}, we shall consider variations obtained from $g^\perp$-variations (and, separately, $g^\top$-variations) through rescaling the varying component of the metric by a function constant on $M$, depending only on the parameter of variation.

\begin{defn} \rm
Let an open, relatively compact set $\Omega \subset M$ be the domain of integration in \eqref{E-Jmix}. Let $g_t$ be a $g^\perp$-variation \eqref{e:Var} with the support contained in $\Omega$, and let ${\cal D}(t)$ denote the $g_t$-orthogonal complement of $\widetilde{\cal D}$. 
A \emph{${\bar g}^\perp$-variation} of the metric $g$ is a family of metrics given by:
\begin{eqnarray}\label{e:Var-Bar}
&& \bar g_t := \phi_t^\perp g_t |_{ {\cal D}(t) \times {\cal D}(t)} + g |_{\widetilde{\cal D} \times \widetilde{\cal D}} , \nonumber \\
&& \phi_t^\perp := \big({\rm Vol}({\Omega}, g_t)/{\rm Vol}({\Omega}, g)\big)^{-2/p} . 
\end{eqnarray}
\end{defn}

Similarly, we define a \emph{${\bar g}^\top$-variation} by taking an arbitrary $g^\top$-variation \eqref{e:Vartop}, $g_t$, and rescaling the ${\widetilde{\cal D} \times \widetilde{\cal D}}$-component of $g_t$ by the function $\phi_t^\top := \big({\rm Vol}({\Omega}, g_t)/{\rm Vol}({\Omega}, g)\big)^{-2/n}$.


\begin{rem} \label{remarkbargvariations} \rm
For both ${\bar g}^\perp$- and ${\bar g}^\top$-variations we have ${\rm Vol}({\Omega},\bar g_t) = {\rm Vol}({\Omega}, g)$ for all $t$ for which $g_t$ is defined, see \cite{rz-1}. Obviously, every $g^\perp$-variation, that preserves the volume of $\Omega$, is a ${\bar g}^\perp$-variation (with $\phi_t^\perp \equiv 1$). On the other hand, since $\dt {\bar g}_t^\perp = \phi_t^\perp \dt g_t + (\dt \phi_t^\perp )g_t$, we see that ${\bar g}^\perp$-variations may not vanish on the boundary of $\Omega$ (unlike all $g^\perp$-variations). Thus, metrics critical with respect to ${\bar g}^\perp$-variations are also critical with respect to volume preserving $g^\perp$-variations, but the reverse statement may not be true. Obviously, the same considerations apply to ${\bar g}^\top$-variations.
\end{rem}

For every $f \in L^1 ({\Omega}, \; {\rm d} \, {\rm vol}_g)$, denote
the mean value of $f$ on ${\Omega}$ with respect to ${\rm d}\,{\rm vol}_g$ by
\[
 f({\Omega}, g) = {\rm Vol}^{-1}({\Omega}, g) \int_{\Omega} f\,{\rm d}\,{\rm vol}_g.
\]

\begin{lem}[see \cite{rz-1}]\label{divH+tildeH}
\begin{equation*}
 {\rm\frac{d}{dt}}\int_{\Omega} \Div(H + \tilde H)\,{\rm d}\vol_{g_t}\!=\!\left\{
 \begin{array}{cc}
   0, & \mbox{\rm for $g^\perp$-variations}, \\
   \!\!\!\frac{1}{2}\Div\big(\frac{2-p}{p} H {-} {\tilde H}\big)(\Omega, g)\int_{\Omega}(\tr_g B)\,{\rm d}\vol_g, &
   \!\ \mbox{\rm for ${\bar g}^\perp$-variations}.
 \end{array}\right.
\end{equation*}
\end{lem}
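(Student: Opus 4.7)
The plan is to reduce the integral to a boundary term via Stokes' theorem: for any metric $\hat g$,
\[
\int_\Omega \Div_{\hat g}(H_{\hat g} + \tilde H_{\hat g})\,d\vol_{\hat g}
= \int_{\partial \Omega} \iota_{H_{\hat g} + \tilde H_{\hat g}}\,d\vol_{\hat g},
\]
and then exploit the explicit form of the metric on a neighborhood of $\partial\Omega$. For a $g^\perp$-variation, $\mathrm{supp}\,B\subset\Omega$ gives $g_t=g$ on a neighborhood of $\partial\Omega$, so the boundary integrand---depending only on local derivatives of the metric---is independent of $t$, and the derivative vanishes.

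For a $\bar g^\perp$-variation the same support argument only shows that near $\partial \Omega$ we have $\bar g_t = \tilde g_{\phi_t^\perp}$, where $\tilde g_\mu := \mu\,g|_{{\cal D}\times{\cal D}} + g|_{\widetilde{\cal D}\times\widetilde{\cal D}}$ is a constant partial rescaling. The main step is to compute how $H$, $\tilde H$ and $d\vol$ transform under this non-conformal rescaling. Via Koszul's formula, the only terms that change are Lie-bracket contributions, whose antisymmetric integrability parts drop out after the symmetrization defining $h$ and $\tilde h$; this gives $h_{\tilde g_\mu} = \mu^{-1}h_g$ and $\tilde h_{\tilde g_\mu} = \mu\,\tilde h_g$. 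Switching to $\tilde g_\mu$-orthonormal adapted frames (unchanged on $\widetilde{\cal D}$, rescaled by $\mu^{-1/2}$ on ${\cal D}$) then yields
\[
H_{\bar g_t} = (\phi_t^\perp)^{-1}H_g,\qquad \tilde H_{\bar g_t} = \tilde H_g,\qquad d\vol_{\bar g_t} = (\phi_t^\perp)^{p/2}\,d\vol_g
\]
on a neighborhood of $\partial\Omega$.

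Plugging these into the boundary integral and running Stokes in reverse produces
\[
\int_\Omega \Div_{\bar g_t}(H_{\bar g_t} + \tilde H_{\bar g_t})\,d\vol_{\bar g_t} = (\phi_t^\perp)^{p/2-1}\!\!\int_\Omega\Div H\,d\vol_g + (\phi_t^\perp)^{p/2}\!\!\int_\Omega\Div\tilde H\,d\vol_g.
\]
Differentiating at $t=0$ with $\phi_0^\perp=1$ and the standard expression $\dt\phi_t^\perp|_{t=0} = -\tfrac{1}{p\,\vol(\Omega,g)}\int_\Omega\tr_g B\,d\vol_g$, then collecting the two resulting terms using the mean-value notation $f(\Omega,g)$, produces the claimed formula.

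The main obstacle is the scaling step: the rescaling $\tilde g_\mu$ acts only on the ${\cal D}$-block, so $h$, $\tilde h$, the ${\cal D}$-frames and the volume form each pick up different powers of $\mu$, and these must be combined consistently to produce the asymmetric coefficients $\tfrac{2-p}{p}$ and $-1$ in the final expression. Once that bookkeeping is in place, the remaining computation is straightforward differentiation.
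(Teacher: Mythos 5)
Your argument is correct, and since the paper only quotes this lemma from \cite{rz-1} without reproducing a proof, there is nothing in the text to compare it against line by line; what can be checked is that your route is consistent with the auxiliary facts the paper does state, and it is. The key scalings are right: under the constant partial rescaling $\tilde g_\mu=\mu\,g|_{{\cal D}\times{\cal D}}+g|_{\widetilde{\cal D}\times\widetilde{\cal D}}$ the symmetrized Koszul formula kills the $[X,Y]^\perp$ term, giving $h_{\tilde g_\mu}=\mu^{-1}h$ and $\tilde h_{\tilde g_\mu}=\mu\,\tilde h$, hence (taking traces in the rescaled adapted frames) $H_{\bar g_t}=(\phi_t^\perp)^{-1}H$, $\tilde H_{\bar g_t}=\tilde H$, and ${\rm d}\vol_{\bar g_t}=(\phi_t^\perp)^{p/2}{\rm d}\vol_g$ near $\partial\Omega$; combined with $\dt\phi_t^\perp|_{t=0}=-\frac1p(\tr_g B)(\Omega,g)$ (the same formula the paper uses in the proof of the Euler--Lagrange equations for \eqref{JtildeT}), differentiating $(\phi_t^\perp)^{p/2-1}\int_\Omega\Div H\,{\rm d}\vol_g+(\phi_t^\perp)^{p/2}\int_\Omega\Div\tilde H\,{\rm d}\vol_g$ does yield $\frac12\Div\big(\frac{2-p}{p}H-\tilde H\big)(\Omega,g)\int_\Omega(\tr_gB)\,{\rm d}\vol_g$. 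One small refinement you may want to make explicit: the paper assumes only that $\Omega$ is a relatively compact domain, so rather than invoking Stokes on $\partial\Omega$ twice, observe that the $(n+p-1)$-forms $\iota_{H_{\bar g_t}+\tilde H_{\bar g_t}}{\rm d}\vol_{\bar g_t}$ and $(\phi_t^\perp)^{p/2-1}\iota_{H}{\rm d}\vol_g+(\phi_t^\perp)^{p/2}\iota_{\tilde H}{\rm d}\vol_g$ coincide outside the compact support of the variation, so their difference is compactly supported in $\Omega$ and its exterior derivative integrates to zero; this gives your displayed identity for $\int_\Omega\Div_{\bar g_t}(H_{\bar g_t}+\tilde H_{\bar g_t})\,{\rm d}\vol_{\bar g_t}$ (and the vanishing in the $g^\perp$ case, where $\phi_t^\perp\equiv1$) without any boundary regularity. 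With that caveat the proof is complete.
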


Next result compares ${\bar g}^\perp$- and $g^\perp$-variations of the action \eqref{E-Jmix}.

\begin{prop}[see \cite{rz-1}]\label{P-1-5}
Let $g_t$ be a $g^\perp$-variation with $B = \dt g_t$ and let ${\bar g}_t$ be a variation obtained from $g_t$ by \eqref{e:Var-Bar}. Then:
\begin{eqnarray} \label{gtandbargt}
 {\rm\frac{d}{dt}}\,J_{\rm mix,\widetilde{\cal D},\Omega}(\bar{g}_t)_{\,|\,t=0}
 ={\rm\frac{d}{dt}}\,J_{\rm mix,\widetilde{\cal D},\Omega}({g}_t)_{\,|\,t=0}
 - \frac12\Sm^*_{\rm mix}(\Omega,g) \int_{\Omega} \big(\tr_{g} {B} \big)\,{\rm d}\vol_g \,,
\end{eqnarray}
where
\begin{equation}\label{E-S-star}
 \Sm^*_{\rm mix} = \Sm_{\rm mix} -\frac2p\,\big(\Sm_{\,\rm ex} +2\,\<\tilde T,\tilde T\> -\<T,T\> + \Div H\big).
\end{equation}
\end{prop}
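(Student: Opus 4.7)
The strategy is to view the $\bar g^\perp$-variation $\bar g_t$ as the given $g^\perp$-variation $g_t$ composed with a spatially constant rescaling of the ${\cal D}$-component, and to exploit linearity of the first variation together with a direct scaling computation. Since $\phi_0^\perp=1$ and ${\cal D}(0)={\cal D}$, and since $g_t=g_t|_{{\cal D}(t)\times{\cal D}(t)}+g|_{\widetilde{\cal D}\times\widetilde{\cal D}}$ for a $g^\perp$-variation (the off-diagonal blocks vanishing because ${\cal D}(t)$ is the $g_t$-orthogonal complement of $\widetilde{\cal D}$), one may rewrite $\bar g_t-g_t=(\phi_t^\perp-1)\,g_t|_{{\cal D}(t)\times{\cal D}(t)}$ and differentiate at $t=0$ to get
\[
 \bar B := \dt \bar g_t|_{t=0} = B + \phi'(0)\,g^\perp,
\]
where $g^\perp\in{\rm Sym}^2(M)$ is the $(0,2)$-tensor equal to $g$ on ${\cal D}\times{\cal D}$ and zero elsewhere. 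The standard identity $\dt{\rm d}\vol_{g_t}|_{t=0}=\tfrac12(\tr_g B)\,{\rm d}\vol_g$ together with the definition of $\phi_t^\perp$ yields
\[
 \phi'(0)=-\frac{1}{p\,{\rm Vol}(\Omega,g)}\int_\Omega\tr_g B\,{\rm d}\vol_g.
\]
By linearity of the first variation of $J_{\rm mix,\widetilde{\cal D},\Omega}$ at $g$,
\[
 \dt J_{\rm mix,\widetilde{\cal D},\Omega}(\bar g_t)|_{t=0}=\dt J_{\rm mix,\widetilde{\cal D},\Omega}(g_t)|_{t=0}+\phi'(0)\,Q,\qquad Q:=\dt J_{\rm mix,\widetilde{\cal D},\Omega}(g+s\,g^\perp)|_{s=0},
\]
so it suffices to prove $Q=\tfrac{p}{2}\int_\Omega\Sm^*_{\rm mix}\,{\rm d}\vol_g$; substituting and using $\Sm^*_{\rm mix}(\Omega,g)\cdot{\rm Vol}(\Omega,g)=\int_\Omega\Sm^*_{\rm mix}\,{\rm d}\vol_g$ then produces \eqref{gtandbargt}.

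To compute $Q$, parametrize $g+s\,g^\perp$ by the spatial constant $\phi=1+s$ via $g_\phi:=g|_{\widetilde{\cal D}\times\widetilde{\cal D}}+\phi\,g|_{{\cal D}\times{\cal D}}$. Because $\widetilde{\cal D}$ and ${\cal D}$ remain $g_\phi$-orthogonal, a careful term-by-term inspection of the Koszul formula (splitting each summand according to the $\widetilde{\cal D}/{\cal D}$ membership of its inputs; only brackets with one ${\cal D}$- and one $\widetilde{\cal D}$-entry inherit the factor $\phi$) gives the transformation laws
\[
 h^\phi=\phi^{-1}h,\quad \tilde h^\phi=\phi\,\tilde h,\quad T^\phi=T,\quad \tilde T^\phi=\tilde T,\quad H^\phi=\phi^{-1}H,\quad \tilde H^\phi=\tilde H.
\]
Passing to a $g_\phi$-orthonormal frame (which rescales ${\cal D}$-vectors by $\phi^{-1/2}$) then yields
\[
 \Sm_{\,\rm ex}(g_\phi)=\phi^{-1}\Sm_{\,\rm ex}(g),\ \ \widetilde{\Sm}_{\,\rm ex}(g_\phi)=\widetilde{\Sm}_{\,\rm ex}(g),\ \ \<T,T\>(g_\phi)=\phi\<T,T\>(g),\ \ \<\tilde T,\tilde T\>(g_\phi)=\phi^{-2}\<\tilde T,\tilde T\>(g),
\]
together with $\Div(H^\phi+\tilde H^\phi)=\phi^{-1}\Div H+\Div\tilde H$ (since $\Div^{g_\phi}=\Div$ for spatially constant $\phi$) and ${\rm d}\vol_{g_\phi}=\phi^{p/2}\,{\rm d}\vol_g$.

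Combining these with \eqref{eq-ran-ex} and differentiating at $\phi=1$,
\[
 \dt\Sm_{\rm mix}|_{\phi=1}=-\bigl(\Sm_{\,\rm ex}+2\<\tilde T,\tilde T\>-\<T,T\>+\Div H\bigr),\qquad \dt{\rm d}\vol_{g_\phi}|_{\phi=1}=\tfrac{p}{2}\,{\rm d}\vol_g,
\]
whence, by \eqref{E-S-star},
\[
 Q=\int_\Omega\Bigl[\tfrac{p}{2}\Sm_{\rm mix}-\bigl(\Sm_{\,\rm ex}+2\<\tilde T,\tilde T\>-\<T,T\>+\Div H\bigr)\Bigr]{\rm d}\vol_g=\tfrac{p}{2}\int_\Omega\Sm^*_{\rm mix}\,{\rm d}\vol_g,
\]
as required.

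The main obstacle is the Koszul bookkeeping that establishes the asymmetric scaling laws $h^\phi=\phi^{-1}h$ versus $\tilde h^\phi=\phi\,\tilde h$: although $\phi$ is a spatial constant, the Levi-Civita connection still changes, and the asymmetry — with $H$ scaling by $\phi^{-1}$ but $\tilde H$ left invariant — emerges only after carefully splitting each of the seven Koszul terms by the $\widetilde{\cal D}/{\cal D}$ membership of its inputs. Once these transformation laws are in hand, the remaining integrations and the identification with $\Sm^*_{\rm mix}$ reduce to algebraic manipulations based on \eqref{E-S-star}.
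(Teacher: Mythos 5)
Your proof is correct, and it follows essentially the same route as the paper's source for this statement (the paper only cites \cite{rz-1} here, but the identical technique appears in its proof of the Euler--Lagrange equations for the action \eqref{JtildeT}): one determines how each term of \eqref{eq-ran-ex} and the volume form scale under the constant rescaling of the ${\cal D}$-component, uses $\dt\phi_t^\perp|_{t=0}=-\frac1p(\tr_g B)(\Omega,g)$, and differentiates. Your scaling laws $h^\phi=\phi^{-1}h$, $\tilde h^\phi=\phi\,\tilde h$, $\tilde H^\phi=\tilde H$, $\<\tilde T,\tilde T\>_{g_\phi}=\phi^{-2}\<\tilde T,\tilde T\>_g$, $\<T,T\>_{g_\phi}=\phi\<T,T\>_g$ and the resulting coefficient $\frac p2\Sm^*_{\rm mix}$ all check out against \eqref{E-S-star}.
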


For ${\bar g}^\top$-variations we obtain equation of the same form as \eqref{gtandbargt}, only with
\begin{equation} \label{E-S-star-tilde}
\widetilde{\Sm}^*_{\rm mix} = \Sm_{\rm mix} -\frac2n\,\big(\widetilde{\Sm}_{\,\rm ex} +2\,\< T, T\> -\<\tilde T, \tilde T\> + \Div {\tilde H} \big)
\end{equation}
in place of $\Sm^*_{\rm mix}$. Note that $\widetilde{\Sm}^*_{\rm mix}$ is dual to $\Sm^*_{\rm mix}$ with respect to interchanging $\widetilde{\cal D}$ and ${\cal D}$.


\begin{thm}[\bf Euler-Lagrange equations]\label{T-main00}
A metric $g\in{\rm Riem}(M,\widetilde{\cal D},{\cal D})$ is critical for the action~\eqref{E-Jmix}
with respect to ${\bar g}^\perp$-variations if and only if
\begin{eqnarray}\label{E-main-0i}
 \nonumber
 &&\hskip-7mm {r}_{\cal D} -\<\tilde h,\,\tilde H\> +\widetilde{\cal A}^\flat -\widetilde{\cal T}^\flat
 +\Phi_h +\Phi_T +\Psi -{\rm Def}_{\cal D}\,(H) +\widetilde{\cal K}^\flat \\
 && =\frac12\,\big(\Sm_{\rm mix} - \Sm^*_{\rm mix}(\Omega,g) + \Div(\tilde H - H)\big)\,g^\perp
 , \\
\label{E-main-0ii}
\nonumber
 &&\hskip-7mm 2 \<\theta,\, {\tilde H}\> + ( \Div(\alpha - \tilde \theta) )_{ | \widetilde{\cal D} \times {\cal D}} + ( \Div (\alpha - \tilde \theta) )_{ |  {\cal D} \times \widetilde{\cal D} } + \<{\tilde \theta} - {\tilde\alpha}, H\> \\
 && + {\rm Sym}(H^{\flat}\otimes{\tilde H}^{\flat}) -\,{\tilde \delta}_{H} + 2\Lambda_{{\tilde\alpha}, \theta} + \Lambda_{\alpha, {\tilde\alpha}} + \Lambda_{\theta, {\tilde\theta}} = 0,
\end{eqnarray}
where $\Sm^*_{\rm mix}$ is given by \eqref{E-S-star}.

A metric $g\in{\rm Riem}(M,\widetilde{\cal D},{\cal D})$ is critical for the action~\eqref{E-Jmix}
with respect to ${\bar g}^\top$-variations if and only if
\begin{eqnarray}
\label{E-main-0iii}
 \nonumber
 &&\hskip-7mm {r}_{\widetilde{\cal D}} -\<h,\,H\> +{\cal A}^\flat -{\cal T}^\flat
 +\Phi_{\tilde h} +\Phi_{\tilde T} +\widetilde\Psi -{\rm Def}_{\widetilde{\cal D}}\,(\tilde H) +{\cal K}^\flat \\
 && =\frac12\,\big(\widetilde\Sm_{\rm mix} - \widetilde\Sm^*_{\rm mix}(\Omega,g)
 + \Div(H -\tilde H)\big)\,{g^\top} ,
\end{eqnarray}
where $\widetilde{\Sm}^*_{\rm mix}$ is given by \eqref{E-S-star-tilde}.
\end{thm}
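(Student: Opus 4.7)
The plan is to write $\frac{d}{dt}J_{\rm mix,\widetilde{\cal D},\Omega}(\bar g_t)|_{t=0}$ as a single integral $\int_\Omega \<E,B\>\,{\rm d}\vol_g$ for one explicit symmetric $(0,2)$-tensor $E$, and then read off the Euler--Lagrange equations block-by-block from the structure of $B$ allowed under each kind of variation.

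For $\bar g^\perp$-variations I first decompose $\dt\Sm_{\rm mix}$ using identity \eqref{eq-ran-ex}, so that $\dt\Sm_{\rm mix} = \dt\Sm_{\rm ex}+\dt\widetilde\Sm_{\rm ex}+\dt\<T,T\>+\dt\<\tilde T,\tilde T\>+\dt\Div(H+\tilde H)$. Corollary \ref{L-H2h2-D} supplies the first two summands and equations \eqref{E-T-gen}, \eqref{E-tildeT-gen} the next two; Lemma \ref{divH+tildeH} lets me replace $\int_\Omega \dt\Div(H+\tilde H)\,{\rm d}\vol_g$ by $-\tfrac12\int_\Omega\Div(H+\tilde H)\,\tr_g B\,{\rm d}\vol_g$ (the full $t$-derivative of the integral is zero for $g^\perp$-variations). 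Combining with $\tfrac12\int_\Omega \Sm_{\rm mix}\,\tr_g B\,{\rm d}\vol_g$ coming from $\dt\,{\rm d}\vol_g=\tfrac12(\tr_g B)\,{\rm d}\vol_g$, and discarding divergences via the Divergence Theorem (since $B$ is compactly supported in $\Omega$), I obtain an expression of the form $\int_\Omega\<E',B\>\,{\rm d}\vol_g + \tfrac12\int_\Omega[\Sm_{\rm mix}-\Div(H+\tilde H)]\,\tr_g B\,{\rm d}\vol_g$. Proposition \ref{P-1-5} then converts this to the $\bar g^\perp$-version by subtracting $\tfrac12\Sm^*_{\rm mix}(\Omega,g)\int_\Omega\tr_g B\,{\rm d}\vol_g$, yielding $\int_\Omega\<E,B\>\,{\rm d}\vol_g$ with $E$ explicit.

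Because $B$ for $\bar g^\perp$-variations vanishes on $\widetilde{\cal D}\times\widetilde{\cal D}$ and is otherwise arbitrary, criticality is equivalent to $E$ being a multiple of $g^\perp$ on ${\cal D}\times{\cal D}$ and vanishing on the mixed block. A direct inspection of where each summand of $E$ lives separates the two equations: $\widetilde{\cal K}^\flat$, $\widetilde{\cal T}^\flat$, $\Phi_h$, $\Phi_T$, $\Div\tilde h$ and the $g^\perp$-multiples contribute to the ${\cal D}\times{\cal D}$-equation, whereas the $\Lambda$-sums, the terms $\<\theta,\tilde H\>$ and $\<\tilde\theta-\tilde\alpha,H\>$, $H^\flat\otimes\tilde H^\flat$, $\tilde\delta_H$, and the mixed components of $\Div\alpha,\Div\tilde\theta$ contribute to the mixed equation \eqref{E-main-0ii}. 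The ${\cal D}\times{\cal D}$-equation still contains $\Div\tilde h$, which I eliminate via Proposition \ref{L-CC-riccati} to introduce $r_{\cal D}$; this produces \eqref{E-main-0i} exactly. For $\bar g^\top$-variations the strategy is formally identical with Proposition \ref{propvar2} replacing \ref{propvar1} and $\widetilde\Sm^*_{\rm mix}$ replacing $\Sm^*_{\rm mix}$. More economically, the involution $\widetilde{\cal D}\leftrightarrow{\cal D}$ preserves $J_{\rm mix,\widetilde{\cal D},\Omega}$ (it leaves $\Sm_{\rm mix}$ invariant) and exchanges all tilded and untilded quantities as well as the $\bar g^\perp$- and $\bar g^\top$-categories of variations, so \eqref{E-main-0iii} is the image of the ${\cal D}\times{\cal D}$-equation \eqref{E-main-0i} under this involution.

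The hard part will be combinatorial bookkeeping rather than any single calculation. Corollary \ref{L-H2h2-D} together with \eqref{E-T-gen}, \eqref{E-tildeT-gen} generates roughly two dozen terms involving $\Lambda$-tensors, mixed-block projections of divergences, and operators paired with $H,\tilde H$; one must (i) recombine the $\alpha\pm\theta$ and $\tilde\alpha\pm\tilde\theta$ pieces coming from $\dt\<h,h\>$, $\dt\<\tilde h,\tilde h\>$, $\dt g(H,H)$ and $\dt\<\tilde T,\tilde T\>$ into the exact combination $\Lambda_{\alpha,\tilde\alpha}+\Lambda_{\theta,\tilde\theta}+2\Lambda_{\tilde\alpha,\theta}$ appearing in \eqref{E-main-0ii}, (ii) track signs of the divergence and $\tr_g B$ terms so that the coefficient of $g^\perp$ in the right-hand side of \eqref{E-main-0i} assembles into $\tfrac12(\Sm_{\rm mix}-\Sm^*_{\rm mix}(\Omega,g)+\Div(\tilde H-H))$, and (iii) verify that tensors such as $\Lambda_{\alpha,\tilde\alpha}$, $\<\theta,\tilde H\>$ or $H^\flat\otimes\tilde H^\flat$ vanish identically on the pure blocks, which follows directly from the definitions of $\alpha,\theta,\tilde\alpha,\tilde\theta$ but is essential for the block decomposition to produce the clean equations \eqref{E-main-0i} and \eqref{E-main-0ii}.
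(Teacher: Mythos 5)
Your plan is correct and follows essentially the same route as the paper's proof: rewrite the integrand as $Q(g)=\Sm_{\rm ex}+\widetilde\Sm_{\rm ex}+\<T,T\>+\<\tilde T,\tilde T\>$ via \eqref{eq-ran-ex}, dispose of the $\Div(H+\tilde H)$ term with Lemma \ref{divH+tildeH}, assemble the first variation from Corollary \ref{L-H2h2-D} and \eqref{E-tildeT-gen}--\eqref{E-T-gen}, pass to $\bar g^\perp$-variations with Proposition \ref{P-1-5}, split the resulting tensor into its ${\cal D}\times{\cal D}$ and $\widetilde{\cal D}\times{\cal D}$ blocks, substitute $r_{\cal D}$ via Proposition \ref{L-CC-riccati}, and obtain \eqref{E-main-0iii} by the $\widetilde{\cal D}\leftrightarrow{\cal D}$ duality. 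The remaining work is exactly the term-by-term bookkeeping you identify, which the paper carries out explicitly in \eqref{E-Sc-var1a}--\eqref{ElmixDtildeD}.
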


\begin{proof} 
Let $g_t$ be a $g^\perp$-variation.
 By \eqref{eq-ran-ex} and Lemma~\ref{divH+tildeH}, we have
\begin{equation*}
 {\rm\frac{d}{dt}}\,J_{\rm mix,\widetilde{\cal D},\Omega}(g_t)_{\,|\,t=0} = {\rm\frac{d}{dt}}\int_{\Omega} Q(g_t)\,{\rm d}\vol_{g_t}\,,
\end{equation*}
where $Q(g):=\Sm_{\rm mix} -\Div(H+\tilde H)$ can be presented using \eqref{eq-ran-ex} as
\begin{equation}\label{E-Q-def}
 Q(g) = \Sm_{\,\rm ex}(g) +\widetilde\Sm_{\,\rm ex}(g) +\<T,T\>_{g} +\<\tilde T,\tilde T\>_{g}\,.
\end{equation}
Applying Corollary~\ref{L-H2h2-D} to \eqref{E-Q-def}, using \eqref{E-divP} and removing integrals of divergences of vector fields compactly supported in $\Omega$, we get
\begin{eqnarray}\label{E-Sc-var1a}
 &&\hskip-8mm \int_{\Omega}\dt Q(g_t)_{\,|\,t=0}\,{\rm d}\vol_g
 = \int_{\Omega}\big\<4\Lambda_{{\tilde\alpha}, \theta} -\Div{\tilde h}
 -\widetilde{\cal K}^\flat -\Phi_{h} -\Phi_T +2\,\tilde{\cal T}^\flat
  +\,4\,\<\theta,\,{\tilde H}\> \nonumber \\
 && + (\Div {\tilde H})g^{\perp} + 2 ( \Div\alpha )_{ | \widetilde{\cal D} \times {\cal D}} + 2 ( \Div \alpha )_{ |  {\cal D} \times \widetilde{\cal D} } + 2\,\Lambda_{\alpha, {\tilde\alpha} + {\tilde\theta}} +2\,\<{\tilde \theta} -{\tilde\alpha}, H\> \nonumber \\
 && +\,2\,{\rm Sym}(H^{\flat}\otimes{\tilde H}^{\flat}) - 2\,{\tilde\delta}_{H} + 2\Lambda_{{\tilde\theta}, \theta - \alpha}
 -2 ( \Div {\tilde \theta} )_{ | \widetilde{\cal D} \times {\cal D}} - 2 ( \Div \tilde \theta )_{ |  {\cal D} \times \widetilde{\cal D} } ,\ B \big\> \,{\rm d}\vol_g,
\end{eqnarray}
where ${B}=\{\dt g_t\}_{\,|\,t=0}$.
Notice that $\tr_{g} {B}=\<g^\perp,\,B\>$. Then by \eqref{E-Sc-var1a}, we have
\begin{eqnarray}\label{E-varJh-init2}
 \nonumber
 && {\rm\frac{d}{dt}}\,J_{\rm mix,\widetilde{\cal D},\Omega}({g}_t)_{|\,t=0} = \int_{\Omega}
 \big\<4\,\Lambda_{{\tilde\alpha}, \theta} -\Div{\tilde h} -\widetilde{\cal K}^\flat
 - \Phi_{h} - \Phi_T +2\,\tilde{\cal T}^\flat  +\,4\,\<\theta,\,{\tilde H}\> \\
\nonumber
 && +2 ( \Div(\alpha -\tilde\theta) )_{ | \widetilde{\cal D} \times {\cal D}} + 2 ( \Div ( \alpha - \tilde \theta ) )_{ |  {\cal D} \times \widetilde{\cal D} }
 + 2 \Lambda_{\alpha, {\tilde\alpha} + {\tilde \theta}} + 2\<{\tilde \theta}
 - {\tilde\alpha}, H\> \\
 && +2\,{\rm Sym}(H^{\flat} \otimes {\tilde H}^{\flat}) -\,2\,{\tilde \delta}_{H} +2\,\Lambda_{{\tilde \theta}, \theta - \alpha}
  +\frac{1}{2}\,\big(\,{\Sm}_{\,\rm mix} + \Div({\tilde H} - H)\big)\, g^{\perp},\ B\big\>\,{\rm d}\vol_g.
\end{eqnarray}
By \eqref{E-varJh-init2} and Proposition~\ref{P-1-5}, we obtain
\begin{eqnarray}\label{E-Jmix-dt-fin}
\nonumber
 && {\rm\frac{d}{dt}}\,J_{\rm mix,\widetilde{\cal D},\Omega}(\bar g_t)_{\,|\,t=0} = \int_{\Omega}
 \<4\,\Lambda_{{\tilde\alpha}, \theta} -\Div {\tilde h} -\widetilde{\cal K}^\flat - \Phi_{h} - \Phi_T
 + 2\,\tilde{\cal T}^\flat +4 \<\theta,\,{\tilde H}\> \\
\nonumber
 && +\,2 ( \Div(\alpha - \tilde\theta) )_{ | \widetilde{\cal D} \times {\cal D}} + 2 ( \Div ( \alpha - \tilde \theta ) )_{ |  {\cal D} \times \widetilde{\cal D} }
 + 2\,\Lambda_{\alpha, {\tilde\alpha} + {\tilde \theta}} + 2\,\<{\tilde \theta} - {\tilde\alpha}, H\>
 +2\,{\rm Sym}(H^{\flat} \otimes {\tilde H}^{\flat}) \\
 && - 2\,{\tilde \delta}_{H} + 2\,\Lambda_{{\tilde \theta}, \theta - \alpha}
 +\frac{1}{2}\,\big(\,{\Sm}_{\,\rm mix} - {\Sm}_{\,\rm mix}^{*}(\Omega, g)
 +\Div({\tilde H} - H)\big)\,g^{\perp}, B\>\,{\rm d}\vol_g.
\end{eqnarray}
If $g$ is critical for $J_{\rm mix,\widetilde{\cal D},\Omega}$ with respect to ${\bar g}^\perp$-variations,
then the integral in \eqref{E-Jmix-dt-fin} is zero for arbitrary symmetric $(0,2)$-tensor $B$ vanishing on $\widetilde{\cal D} \times \widetilde{\cal D}$.
This yields the Euler-Lagrange equation, that we can decompose into two independent parts:
its $\cal D \times \cal D$ and $\widetilde{\cal D} \times {\cal D}$ components, obtaining the following
\begin{subequations}
\begin{eqnarray}\label{ElmixDD}
 && 
 \Div{\tilde h} +\widetilde{\cal K}^\flat + \Phi_{h} + \Phi_T -2\,\tilde{\cal T}^\flat
 =\frac{1}{2}\,\big({\Sm}_{\,\rm mix} - {\Sm}\,_{\rm mix}^{*}(\Omega, g) +\Div({\tilde H}- H)\big)\,g^{\perp}, \\
\label{ElmixDtildeD}
 && 
 2\,\<\theta,\, {\tilde H}\> + 2\Lambda_{{\tilde\alpha}, \theta }
 {+} ( \Div(\alpha - \tilde \theta) )_{ | \widetilde{\cal D} \times {\cal D}} + ( \Div  ( \alpha - \tilde \theta ) )_{ |  {\cal D} \times \widetilde{\cal D} }  \nonumber \\ &&
  {+} \Lambda_{\alpha, {\tilde\alpha} +{\tilde \theta}}
 {+}\<{\tilde \theta} - {\tilde\alpha}, H\> + {\rm Sym}(H^{\flat} \otimes {\tilde H}^{\flat})
 {-} {\tilde \delta}_{H} {+} \Lambda_{{\tilde \theta}, \theta - \alpha } =0.
\end{eqnarray}
\end{subequations}
Using
tensor $r_{{\cal D}}$ (Proposition~\ref{L-CC-riccati}) and replacing ${\Div}\,\tilde h$ in \eqref{ElmixDD}
according to \eqref{E-genRicN}, we rewrite \eqref{ElmixDD} as \eqref{E-main-0i}.
Using the properties $\Lambda_{P,Q} = \Lambda_{Q,P}$ and $\Lambda_{P, Q_{1} + Q_{2}} = \Lambda_{P,Q_1 }+ \Lambda_{P,Q_2 }$,
we rewrite \eqref{ElmixDtildeD} as~\eqref{E-main-0ii}.
Finally, using the fact that all variation formulas for $g^\top$-variations are dual to the ${\cal D} \times {\cal D}$ components of the variation formulas for $g^\perp$-variations, we can take the dual equation to \eqref{ElmixDD} to obtain the following Euler-Lagrange equation for ${\bar g}^\top$-variations:
\begin{equation} \label{ElmixDtildeDtilde}
 \Div{ h} +{\cal K}^\flat + \Phi_{\tilde h} + \Phi_{\tilde T} -2\,{\cal T}^\flat
 =\frac{1}{2}\,\big({\Sm}_{\,\rm mix} - \widetilde{\Sm}\,_{\rm mix}^{*}(\Omega, g) +\Div({ H}- {\tilde H})\big)\,g^{\top} .
\end{equation}
Using the dual of Proposition~\ref{L-CC-riccati} yields \eqref{E-main-0iii}.

Note that \eqref{E-main-0i} and \eqref{E-main-0iii} coincide with the equations obtained in~\cite{rz-1} for \textit{adapted variations} of metric, i.e. ${\bar g}^\perp$- and ${\bar g}^\top$-variations that were additionally required to preserve orthogonality of distributions $\widetilde{\cal D}$ and ${\cal D}$.
\end{proof}

Similarly to the discussion in Remark \ref{remarkbargvariations}, we can relate the Euler-Lagrange equations for different types of variations.

\begin{rem} \label{remarkAllVariations} \rm
To obtain the Euler-Lagrange equations for arbitrary, not necessarily preserving volume of $\Omega$, $g^\perp$-variations (resp. $g^\top$-variations), one should merely delete the mean value terms $\Sm^*_{\rm mix}$ from the Euler-Lagrange equations obtained for ${\bar g}^\perp$-variations (resp. ${\bar g}^\top$-variations).
To obtain the Euler-Lagrange equations for arbitrary variations $g_t$ 
preserving the volume of $\Omega$, one should replace both $\Sm^*_{\rm mix}$ and $\widetilde{\Sm}^*_{\rm mix}$ by the same, arbitrary function $\lambda \in C^\infty(M)$.
\end{rem}

A trivial example of metric critical for the action \eqref{E-Jmix} is the one of the metric product of manifolds, i.e., with both $\widetilde{\cal D}$ and ${\cal D}$ integrable and totally geodesic. In fact, it is difficult to find other critical points of \eqref{E-Jmix} for arbitrary variations of metric. However, there exist many interesting examples of metrics critical with respect to both ${\bar g}^\perp$- and ${\bar g}^\top$-variations, 
that will be presented in further sections. Despite the fact that ${\bar g}^\perp$- and ${\bar g}^\top$-variations are defined for a fixed domain of integration $\Omega$, we shall find metrics that are critical with respect to them regardless of the choice of $\Omega$. Also, note that ${\bar g}^\perp$- and ${\bar g}^\top$-variations generalize other variations considered in literature, e.g., variation among associated metrics on a contact manifold \cite{b2010}, discussed in Section \ref{SectionContact}.


\section{Particular cases}
\label{sec:main}

In this part of the paper we examine the Euler-Lagrange equations \eqref{E-main-0i}-\eqref{E-main-0iii}, 
assuming particu\-lar (co)dimension of the distribution $\widetilde{\cal D}$ or existence of an additional 
structure on the manifold $M$. In those special geometric settings, we obtain examples of metrics critical for the action \eqref{E-Jmix}, with respect to different variations previously discussed.

\subsection{Flows} \label{sectionFlows}

Let $\widetilde{\cal D}$ be spanned by a nonsingular vector field $N$, then it is tangent to the one-dimensional foliation by the flowlines of $N$.
 In~this case, $\Sm_{\rm mix}=\epsilon_N\Ric_{N}$, 
$R_N=R(N,\,\cdot\,)N$ is the Jacobi operator and the partial Ricci tensor takes a particularly simple form:
\[
 r_{\widetilde{\cal D}}=\epsilon_{N} \Ric_{N}\, g^\top ,\qquad
 r_{\,\cal D}=\epsilon_{N} (R_N)^\flat,
\]
We have $\tilde h = \tilde h_{sc} N$,
where $\tilde h_{sc} = \epsilon_{N} \<\tilde h, N\>$ is the scalar second fundamental form of~${\cal D}$. Let $\tilde A_N$ be the Weingarten operator associated to $\tilde h_{sc}$ and let $\tilde\tau_i=\tr\tilde A_N^{\,i}\ (i\ge0)$.
We have
$\Sm_{\rm ex} = g(H,H) - \< h,h \> = g(H,H) - g(H,H) = 0$, 
$\widetilde{\Sm}_{\,\rm ex}=\tilde\tau_1^2 -\tilde\tau_2$ and
\begin{eqnarray*}
 \Div N \eq \sum\nolimits_{\,i} \epsilon_{i}\,g(\nabla_{{\cal E}_i} N, {\cal E}_i)
 = -g(N,\sum\nolimits_{\,i}\! \epsilon_{i}\nabla_{{\cal E}_i}\,{\cal E}_i)
 = -g(N, \tilde H) = -\tilde\tau_1,\\
 \Div(\tilde\tau_1 N) \eq N(\tilde\tau_1) +\tilde\tau_1\Div N=N(\tilde\tau_1)-\tilde\tau_1^2.
\end{eqnarray*}
The curvature of the flow lines is $H=\epsilon_N\,\nabla_{N}\,N$.
 It is easy to see that \eqref{E-S-star} and \eqref{E-S-star-tilde} take the following form:
\begin{eqnarray} \label{Sstarflow}
&& \Sm^*_{\rm mix} = \epsilon_{N}\Ric_{N} -\,2(\frac2p\,\<\tilde T,\tilde T\> +\frac1p\,\Div H)
\\
\label{tildeSstarflow}
&& \widetilde{\Sm}^*_{\rm mix} = \epsilon_{N}\Ric_{N} -\,2(\epsilon_{N}(N(\tilde\tau_1)-\tilde\tau_2)-\<\tilde T,\tilde T\> ) .
\end{eqnarray}
From Theorem~\ref{T-main00} 
 we obtain the following.

\begin{cor}[\bf Euler-Lagrange equations]
\label{C-main02}
Let a distribution $\widetilde{\cal D}$ be spanned by a unit vector field $N$ on a manifold~$M$
with respect to $g\in{\rm Riem}(M,\widetilde{\cal D},{\cal D})$.
The metric $g$ is critical for the action \eqref{E-Jmix} with respect to ${\bar g}^\perp$-variations if and only if
\begin{eqnarray}\label{E-main-1i}
\nonumber
 && \epsilon_{N}\big(R_N +\tilde A_N^2 -(\tilde T^\sharp_N)^2 +[\tilde T_N^\sharp,\tilde A_N]\big)^\flat
 \!-\tilde\tau_1\tilde h_{sc} +H^\flat\otimes H^\flat -{\rm Def}_{\cal D}\,H  \\
 &&
 \hskip2mm
 =\frac12\,\big(\epsilon_{N} \Ric_{N} -\Sm^*_{\rm mix}(\Omega,g)
 +\Div(\epsilon_{N} \tilde\tau_1 N - H)\big)\,g^\perp , \\
 \label{E-main-3i}
 && {\Div}^{\perp} {\tilde T}^{\sharp}_{N} |_{\cal D} + 2\, ( {\tilde T}^{\sharp}_{N}(H))^\flat =0 ,  
\end{eqnarray}
where $\Sm^*_{\rm mix}$ is given by \eqref{Sstarflow}.

The metric $g$ is critical for the action \eqref{E-Jmix} with respect to ${\bar g}^\top$-variations if and only if
\begin{eqnarray}
\label{E-main-2i}
 && \epsilon_{N} \Ric_{N} + \widetilde{\Sm}^*_{\rm mix}(\Omega,g) -4 \<\tilde T, \tilde T\> -\Div(\epsilon_{N}\tilde\tau_1 N
 + H) =0 , 
\end{eqnarray}
where $\widetilde{\Sm}^*_{\rm mix}$ is given by \eqref{tildeSstarflow}.
\end{cor}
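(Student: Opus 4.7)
The plan is to specialize Theorem~\ref{T-main00} to a one-dimensional $\widetilde{\cal D}$ spanned by the unit field $N$. The key reductions I would invoke at the outset are: (i) automatic integrability forces $T\equiv 0$, so $\theta\equiv 0$, $\Phi_T=0$, $\<T,T\>=0$, ${\cal T}=0$; (ii) on the one-dimensional space $\widetilde{\cal D}$ the Weingarten operator $A_Z$ acts by scalars, namely $A_Z N=g(H,Z)\,N$, hence $h(N,N)=\epsilon_N H$, $\<h,h\>=g(H,H)$, $\Sm_{\,\rm ex}=0$, and $\Phi_h=H^{\flat}\otimes H^{\flat}$; (iii) the sums $\sum_{a}\epsilon_a(\cdots)$ collapse to the single term $a=N$, giving $\widetilde{\cal A}^\flat=\epsilon_N(\tilde A_N^{\,2})^\flat$, $\widetilde{\cal T}^\flat=\epsilon_N((\tilde T^\sharp_N)^2)^\flat$, and $\widetilde{\cal K}^\flat=\epsilon_N[\tilde T^\sharp_N,\tilde A_N]^\flat$; (iv) the partial Ricci tensor reduces to $r_{\cal D}=\epsilon_N R_N^{\flat}$, while $\tilde H=\epsilon_N\tilde\tau_1 N$.

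With these substitutions, (\ref{E-main-1i}) is obtained from the ${\cal D}\times{\cal D}$ equation (\ref{E-main-0i}) by a term-by-term replacement: the block $\widetilde{\cal A}^\flat-\widetilde{\cal T}^\flat+\widetilde{\cal K}^\flat$ becomes the $\epsilon_N$-bracket of (\ref{E-main-1i}); $\<\tilde h,\tilde H\>$ becomes $\tilde\tau_1 \tilde h_{sc}$; $\Phi_h$ supplies $H^{\flat}\otimes H^{\flat}$; and $\Phi_T$, the $T$-terms of $\Psi$, and the $\<T,T\>$-terms in $\Sm^{*}_{\rm mix}$ all vanish, so that (\ref{Sstarflow}) reproduces the right-hand side of (\ref{E-main-1i}). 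Similarly, (\ref{E-main-2i}) follows from the $\widetilde{\cal D}\times\widetilde{\cal D}$ equation (\ref{E-main-0iii}) by evaluating at $(N,N)$: using $g^\top(N,N)=\epsilon_N$, $r_{\widetilde{\cal D}}(N,N)=\Ric_N$, ${\rm Def}_{\widetilde{\cal D}}\tilde H(N,N)=\epsilon_N N(\tilde\tau_1)$, and the 1-dimensional forms of $\Phi_{\tilde h},{\cal A}^\flat,{\cal T}^\flat,\widetilde\Psi$, then grouping terms with the aid of the definition (\ref{E-S-star-tilde}) of $\widetilde{\Sm}^{*}_{\rm mix}$, the desired scalar identity emerges.

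The more delicate step is the reduction of the $\widetilde{\cal D}\times{\cal D}$ equation (\ref{E-main-0ii}) to the compact form (\ref{E-main-3i}). Although $\theta=0$ immediately kills the terms $2\<\theta,\tilde H\>$, $2\Lambda_{\tilde\alpha,\theta}$, $\Lambda_{\theta,\tilde\theta}$, the remaining contributions involving $\alpha$ and $\tilde\alpha$ are genuinely nonzero and must cancel in pairs. I would evaluate (\ref{E-main-0ii}) at $(N,X)$ with $X\in{\cal D}$, use $A_i N=g(H,{\cal E}_i)N$ to render $\alpha$ explicit as a symmetric product of $H^{\flat}$ and the $\widetilde{\cal D}$-projection, and then verify that the cluster of $\alpha$- and $\tilde\alpha$-terms, namely $(\Div\alpha)_{|\widetilde{\cal D}\times{\cal D}}+(\Div\alpha)_{|{\cal D}\times\widetilde{\cal D}}-\<\tilde\alpha,H\>+{\rm Sym}(H^{\flat}\otimes\tilde H^{\flat})-\tilde\delta_H+\Lambda_{\alpha,\tilde\alpha}$, vanishes identically on such pairs. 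What survives is the $\tilde\theta$-block $\<\tilde\theta,H\>-(\Div\tilde\theta)_{|\widetilde{\cal D}\times{\cal D}}-(\Div\tilde\theta)_{|{\cal D}\times\widetilde{\cal D}}$, which by the 1-dimensional expression $\tilde\theta(N,X)=\tfrac12 \tilde T^\sharp_N X$ and formula (\ref{E-divP}) rewrites exactly as $\Div^\perp \tilde T^\sharp_N|_{\cal D}+2(\tilde T^\sharp_N(H))^\flat$. The main obstacle is executing the pairwise cancellation above while tracking the signs $\epsilon_N,\epsilon_i$ and keeping the $(N,X)$-evaluations of the $\Lambda$-tensors under control; once it is carried through, (\ref{E-main-3i}) follows.
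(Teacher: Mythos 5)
Your route is the same as the paper's: specialize Theorem \ref{T-main00} to $\dim\widetilde{\cal D}=1$, reduce \eqref{E-main-0i} and \eqref{E-main-0iii} by the rank-one identities, and get \eqref{E-main-3i} from \eqref{E-main-0ii} by noting $\theta=0$ and checking that the remaining $\alpha$- and $\tilde\alpha$-contributions cancel in pairs, leaving only the $\tilde\theta$-block. The paper performs exactly this evaluation at $(X,N)$, arrives at the intermediate identity \eqref{E-main-0ii-flows}, and then rewrites it via \eqref{E-divP}; your claimed cancellation of the cluster $(\Div\alpha)_{|\widetilde{\cal D}\times{\cal D}}+(\Div\alpha)_{|{\cal D}\times\widetilde{\cal D}}-\<\tilde\alpha,H\>+{\rm Sym}(H^{\flat}\otimes\tilde H^{\flat})-\tilde\delta_H+\Lambda_{\alpha,\tilde\alpha}$ is exactly what the paper's list of $(X,N)$-evaluations verifies. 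Your treatment of \eqref{E-main-2i} also matches.

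One intermediate claim is wrong and, taken literally, would derail \eqref{E-main-1i}: you assert $\Phi_h=H^{\flat}\otimes H^{\flat}$. In fact $\Phi_h=0$ in the flow case, since $h(N,N)=\epsilon_N H$ gives $\sum_{a,b}\epsilon_a\epsilon_b\,S(h(E_a,E_b),h(E_a,E_b))=S(H,H)$, which cancels the $S(H,H)$ term in the definition of $\Phi_h$ (equivalently, Remark \ref{Phihzero} applies: $\Sm_{\,\rm ex}=0$ and the image of $h$ is spanned by $H$). The term $H^{\flat}\otimes H^{\flat}$ in \eqref{E-main-1i} comes instead from $\Psi(X,Y)=\tr(A_YA_X)=g(H,X)\,g(H,Y)$, whose $A$-part does \emph{not} vanish — only its $T$-part does. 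Since $\Phi_h$ and $\Psi$ enter \eqref{E-main-0i} with the same sign, keeping your value of $\Phi_h$ together with the correct $\Psi$ would produce $2\,H^{\flat}\otimes H^{\flat}$ and hence a wrong Euler--Lagrange equation; you need $\Phi_h=0$ and $\Psi=H^{\flat}\otimes H^{\flat}$, as in the paper's list \eqref{E-Jmix-dim1}. With that correction the rest of your argument goes through as in the paper.
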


\proof
 An easy computation shows that
\begin{eqnarray}\label{E-Jmix-dim1}
\nonumber
 \widetilde{\cal A} \eq \epsilon_{N} \tilde A_N^2,\quad
  \<\tilde h_{sc} N,\,\tilde H\> = \tilde\tau_1\tilde h_{sc},\quad \Psi = H^\flat\otimes H^\flat,\quad
  \widetilde\Psi = (\epsilon_{N} \tilde\tau_2 - \<\tilde T, \tilde T\>)\, g^\top ,\\
\nonumber
 {\cal A} \eq g(H,H) \,\widetilde\id,\quad
 {\cal T}  = 0,\quad
 \<h,\,H\> = g(H,H) \, g^\top ,\\
\nonumber
 H\eq \epsilon_{N} \nabla_N\,N,\quad h=H\,g^\top ,\quad \<h,h\>=g(H,H), \\
 \tilde H\eq \epsilon_{N} \tilde\tau_1 N,\quad \tilde\tau_1= \epsilon_{N} \tr_{g}\tilde h_{sc},\quad
 \<\tilde h, \tilde h\> = \epsilon_{N} \tilde\tau_2,\quad
  {\rm Def}_{\widetilde{\cal D}}\,\tilde H = \epsilon_{N} N(\tilde\tau_1)\, g^\top \,.
\end{eqnarray}
 Notice that $(H^\flat\otimes H^\flat)(X,Y)=g(H,X)\,g(H,Y)$.
Substituting
 $\Phi_h=0=\Sm_{\,\rm ex}$,
 $\widetilde{\Sm}_{\,\rm ex}=\epsilon_{N} (\tilde\tau_1^2-\tilde\tau_2)$,
 $\widetilde{\cal T}=\epsilon_{N} \tilde T^{\sharp\,2}_N$
into \eqref{E-main-0i} and using \eqref{E-Jmix-dim1} yields~\eqref{E-main-1i}.
Substituting
 $h =H\,g^\top$,
 $\Phi_{\tilde h} = \epsilon_{N} (\tilde\tau_1^2-\tilde\tau_2)\, g^\top$,
 $\Phi_{\tilde T} = -\<\tilde T, \tilde T\>\, g^\top$,
 ${\cal K}^\flat =0$
into \eqref{E-main-0iii}
 and using \eqref{E-Jmix-dim1} yields \eqref{E-main-2i}.

Let $X$ be orthogonal to~$N$ with $\nabla_{Z}X \in \widetilde{\cal D}$ for all $Z \in TM$. We have $\theta=0$ and since
\begin{eqnarray*}
 2\,(\Div \alpha)(X,N) \eq g(\nabla_N H -\tilde \tau_1 H, X), \\
 2\,\<{\tilde\theta} - {\tilde\alpha}, H\>(X,N) \eq -g({\tilde T}^{\sharp}_N(H) + {\tilde A}_N(H), X), \\
  2\,{\rm Sym}(H^{\flat} \otimes H^{\flat})(X,N) \eq g({\tilde\tau}_1 H,\, X), \\
  2\,{\tilde \delta}_H (X,N) \eq g(\nabla_N H, X), \\
  2\,\Lambda_{\alpha, {\tilde\alpha}}(X,N) \eq g({\tilde A}_N(H), X),
\end{eqnarray*}
the Euler-Lagrange equation \eqref{E-main-0ii} reduces to
\begin{equation} \label{E-main-0ii-flows}
 ( \Div {\tilde \theta} )_{ | \widetilde{\cal D} \times {\cal D}} + ( \Div {\tilde \theta} )_{ |  {\cal D} \times \widetilde{\cal D} } - \< {\tilde \theta} , H \> =0.
\end{equation}
For $X \in \cal D$ such that $\nabla_{Z}X \in \widetilde{\cal D}$ for all $Z \in TM$ we have
\begin{eqnarray*}
 2\,\Div{\tilde\theta}(X,N) \eq \sum\nolimits_{\,i}\epsilon_i\,g((\nabla_{{\cal E}_i}{\tilde T}^{\sharp}_{N})(X), {\cal E}_i)
 +\epsilon_N g(\nabla_{N} ({\tilde T}^{\sharp}_{N} (X)), N) \\
 \eq ({\Div}^{\perp}\,{\tilde T}^{\sharp}_{N}) (X) +g({\tilde T}^{\sharp}_{N}(H), X). 
\end{eqnarray*}
Hence, \eqref{E-main-0ii-flows} can be written as \eqref{E-main-3i}.
\qed

\smallskip

By \eqref{E-divP}, we have $\Div \tilde h = N(\tilde h_{sc}) - \tilde \tau_1\tilde h_{sc}$ and $\Div h =(\Div H)\,\tilde g$.
 Then, see \eqref{E-genRicN} and \eqref{eq-ran-ex},
\begin{eqnarray}\label{E-RicNs1aa}
\nonumber
 \epsilon_{N} \big(R_N + \tilde A_N^2+(\tilde T^\sharp_N)^2\big)^\flat
 \eq N(\tilde h_{sc}) -H^\flat\otimes H^\flat +{\rm Def}_{\cal D}\,H,\\
 \epsilon_{N} \Ric_{N}
 \eq \Div H + \epsilon_{N} (N(\tilde\tau_1) -\tilde\tau_2) + \<\tilde T, \tilde T\>.
\end{eqnarray}
Remark that \eqref{E-RicNs1aa}$_2$ is simply the trace of \eqref{E-RicNs1aa}$_1$.

A flow of a unit vector $N$ is called \textit{geodesic} if the orbits are geodesics ($h=0$)
and \textit{Riemannian} if the metric is bundle-like ($\tilde h=0$).
A nonsingular Killing vector field clearly defines a Riemannian flow; moreover, a Killing vector field of constant length generates a geodesic Riemannian~flow. Restricting Corollary \ref{C-main02} to the case of a geodesic Riemannian~flow, we obtain the following.

\begin{cor}\label{P-flows}
Let $\widetilde{\cal D}$ be spanned by a unit vector field $N$ that generates a geodesic Riemannian flow on a pseudo-Riemannian manifold $(M^{p+1},g)$.
The metric $g$ is critical for the action \eqref{E-Jmix} with respect to ${\bar g}^\perp$-variations if and only if all the following conditions hold:
\begin{subequations}
\begin{eqnarray}\label{E-1geod-Riem}
 && R_N = (1/p)\,\Ric_{N}\id^\perp, \\ 
\label{geodriemflowiii}
 &&
 \Ric(X, N)= 0\quad ( X \in {\cal D} ) ,\\
 \label{RicNconstpneq4}
&& {\rm If}\ p\ne4 \; {\rm then} \; \Ric_{N}={\rm const}\ .
\end{eqnarray}
\end{subequations}
If $\Ric_{N}={\rm const}$ and \eqref{E-1geod-Riem}, \eqref{geodriemflowiii} are satisfied, then the metric is critical for the action \eqref{E-Jmix} with respect to ${\bar g}^\perp$-variations for all $\Omega \subset M$.

The metric $g$ is critical for the action \eqref{E-Jmix} with respect to ${\bar g}^\top$-variations if and only if
\begin{equation} \label{geodriemflowscalar}
\Ric_N = \const.
\end{equation}
The metric $g$ is then critical for the action \eqref{E-Jmix} with respect to ${\bar g}^\top$-variations for all $\Omega \subset M$.

\end{cor}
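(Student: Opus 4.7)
The plan is to specialize Corollary \ref{C-main02} to the case $h=0$, $\tilde h=0$. These hypotheses immediately give $H=0$, $\tilde H=0$, $\tilde h_{sc}=0$, $\tilde\tau_k=0$ and $\tilde A_N=0$, and since $\widetilde{\cal D}$ is one-dimensional also $T=0$. With these vanishings, identity \eqref{E-RicNs1aa}$_1$ collapses to $R_N=-(\tilde T^\sharp_N)^2$; tracing over ${\cal D}$ and using $\tr\tilde{\cal T}=-\<\tilde T,\tilde T\>$ yields the key scalar relation $\<\tilde T,\tilde T\>=\epsilon_N\Ric_N$. Substituting into \eqref{Sstarflow} and \eqref{tildeSstarflow} I would obtain the pointwise formulas $\Sm^*_{\rm mix}=\tfrac{p-4}{p}\,\epsilon_N\Ric_N$ and $\widetilde{\Sm}^*_{\rm mix}=3\epsilon_N\Ric_N$.

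I would then plug into \eqref{E-main-1i}: after cancellation the left-hand side becomes $\epsilon_N(R_N-(\tilde T^\sharp_N)^2)^\flat=2\epsilon_N R_N^\flat$, while the right-hand side is $\tfrac12(\epsilon_N\Ric_N-\Sm^*_{\rm mix}(\Omega,g))\,g^\perp$. Decomposing this symmetric $(0,2)$-tensor identity on ${\cal D}$ into its trace-free and trace parts, the trace-free part gives exactly \eqref{E-1geod-Riem}, while the trace part, after substituting the formula for $\Sm^*_{\rm mix}$, simplifies to $\tfrac{p-4}{p}(\Ric_N-\Ric_N(\Omega,g))=0$. Requiring this for every admissible $\Omega$ yields \eqref{RicNconstpneq4}: $\Ric_N=\const$ when $p\ne 4$, and no extra condition when $p=4$. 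Equation \eqref{E-main-3i}, with $H=0$, becomes $\Div^\perp\tilde T^\sharp_N|_{\cal D}=0$, and to identify this with \eqref{geodriemflowiii} I would use the O'Neill-type identity $\Ric(X,N)=-(\Div^\perp\tilde T^\sharp_N)(X)$ for $X\in{\cal D}$, valid under $h=0$ and $\tilde h=0$.

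For ${\bar g}^\top$-variations the same substitutions turn \eqref{E-main-2i} into $\widetilde{\Sm}^*_{\rm mix}(\Omega,g)=3\epsilon_N\Ric_N$; combined with the pointwise value of $\widetilde{\Sm}^*_{\rm mix}$ this forces $\Ric_N(\Omega,g)=\Ric_N$ for all $\Omega$, hence $\Ric_N=\const$, proving \eqref{geodriemflowscalar}. Sufficiency of the stated conditions for criticality on every $\Omega$ is then a reading of the same identities backwards: once $\Ric_N=\const$ together with \eqref{E-1geod-Riem} and \eqref{geodriemflowiii} hold, both sides of \eqref{E-main-1i}, \eqref{E-main-3i}, \eqref{E-main-2i} match pointwise and the dependence on $\Omega$ drops out. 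The one step I expect to require genuine work is the identity $\Ric(X,N)=-(\Div^\perp\tilde T^\sharp_N)(X)$, which is not stated explicitly in the excerpt; I would verify it by a direct Codazzi-type computation, expanding $\Ric(X,N)=\sum_i\epsilon_i\,g(R({\cal E}_i,X)N,{\cal E}_i)$ with $\nabla_Z N=\tilde T^\sharp_N(Z)$ (which follows from $\tilde A_N=0$) and using $h=0$ to eliminate the remaining curvature terms.
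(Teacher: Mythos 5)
Your proposal is correct and follows essentially the same route as the paper: specialize Corollary \ref{C-main02} using $h=\tilde h=0$ (so $R_N=-(\tilde T^\sharp_N)^2$ and $\<\tilde T,\tilde T\>=\epsilon_N\Ric_N$), split \eqref{E-main-1i} into its trace-free and trace parts, and convert \eqref{E-main-3i} into \eqref{geodriemflowiii} via the identity $(\Div^\perp\tilde T^\sharp_N)(X)=-\Ric(X,N)$. The only divergence is that the paper derives this last identity from Gray's integrability tensor of the local Riemannian submersion with totally geodesic fibers together with formula (5.37e) of Tondeur, whereas you propose a direct Codazzi-type computation (which works, but note the sign: with $\tilde h=0$ one has $\nabla_X N=-\tilde T^\sharp_N(X)$ for $X\in{\cal D}$).
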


\begin{proof}
As was proved in \cite{rz-1}, conditions \eqref{E-1geod-Riem}, \eqref{RicNconstpneq4} and equation \eqref{geodriemflowscalar} follow from \eqref{E-main-1i}, \eqref{E-main-2i} and their traces. Condition for the metric to be critical for all $\Omega \subset M$ is $\Sm^*_{\rm mix} ={\rm const}$ or $\widetilde{\Sm}^*_{\rm mix} = {\rm const}$, respectively for ${\bar g}^\perp$- and ${\bar g}^\top$-variations.

For a geodesic Riemannian $N$-flow, \eqref{E-main-3i} reduces to condition ${\Div}^{\perp} {\tilde T}^{\sharp}_{N}(X) = 0$ for all $X \in {\cal D}$, that we shall now examine. A Riemannian geodesic flow locally gives rise to a Riemannian submersion with totally geodesic fibers. Such mappings can be described by the following tensor, introduced by Gray \cite{g1967} and here adjusted to our notation:
\[
 {\cal O}_X Y = ( \nabla_{X^{\perp}} Y^\top )^\perp + ( \nabla_{X^{\perp}} Y^\perp )^\top \quad
 (X,Y \in TM).
\]
It follows that $\cal O$ is antisymmetric with respect to $g$ and we have ${\cal O}_X Y = {\tilde T}(X,Y)$ for $X,Y \in {\cal D}$. Hence, for $X, Y \in {\cal D}$ we have $g( {\tilde T}^\sharp_N  X,Y) = g( {\tilde T}(X,Y) , N ) = g( {\cal O}_X Y , N ) = - g( {\cal O}_X N , Y )$ and we obtain ${\tilde T}^\sharp_N X = - {\cal O}_X N$.


Let $X \in {\cal D}$ and $\nabla_Z X \in {\widetilde D}$ for all $Z \in TM$. Using an adapted frame with ${\cal E}_i \in {\widetilde D}$ at a point, the fact that $\nabla_N N =0$, and the antisymmetry of $(\nabla_Z {\cal O})$ for all $Z \in TM$, we obtain:
\begin{eqnarray*}
({\Div}^{\perp} {\tilde T}^{\sharp}_{N})(X) \eq \sum\nolimits_i g(\nabla_{{\cal E}_i}{\tilde T}^\sharp_N X, {\cal E}_i)
= - \sum\nolimits_i g( \nabla_{ {\cal E}_i }  {\cal O}_X N , {\cal E}_i ) \\
 \eq - \sum\nolimits_i g( (\nabla_{ {\cal E}_i }  {\cal O})_X N , {\cal E}_i )
 = \sum\nolimits_i g(  ( \nabla_{ {\cal E}_i } {\cal O} )_X {\cal E}_i , N ) .
\end{eqnarray*}
From the formula (5.37e) from \cite{to2}, 
adjusted to our definitions of $R$ and $\Ric$, it follows that
\[
 ({\Div}^{\perp}{\tilde T}^{\sharp}_{N})(X) = -\sum\nolimits_i g(R({\cal E}_i, X) {\cal E}_i, N) = -\Ric(X, N).
\]
Thus, we obtain \eqref{geodriemflowiii}.
\end{proof}

From Corollary \ref{P-flows} we immediately obtain the following.

\begin{cor} \label{EinsteinKilling}
Let $(M^{p+1}, g)$, with $p>1$, be an Einstein manifold with a geodesic Riemannian flow. Let $\widetilde{\cal D}$ be the $1$-dimensional distribution tangent to the flowlines. Then $g$ is critical for the action \eqref{E-Jmix} with respect to both ${\bar g}^\perp$ and ${\bar g}^\top$-variations, for all $\Omega \subset M$.
\end{cor}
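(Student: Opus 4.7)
The plan is to deduce the result from Corollary~\ref{P-flows} by verifying each of its hypotheses under the Einstein assumption $\Ric = c\,g$. Substituting $N$ and any $X\in{\cal D}$ into the Einstein identity yields at once $\Ric_{N} = c\,g(N,N) = c\,\epsilon_{N}$, a constant function on all of $M$, and $\Ric(X,N) = c\,g(X,N) = 0$. Consequently conditions \eqref{RicNconstpneq4} (in fact regardless of the restriction $p\ne 4$), \eqref{geodriemflowiii}, and \eqref{geodriemflowscalar} of Corollary~\ref{P-flows} are simultaneously verified, and the global constancy of $\Ric_{N}$ is exactly what allows the ``criticality for all $\Omega\subset M$'' clause of Corollary~\ref{P-flows} to be invoked.

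In the ${\bar g}^{\top}$-setting Corollary~\ref{P-flows} requires only equation \eqref{geodriemflowscalar}, so that half of the statement closes immediately. In the ${\bar g}^{\perp}$-setting, it remains to verify the pointwise proportionality \eqref{E-1geod-Riem}, namely $R_{N} = \tfrac{1}{p}\,\Ric_{N}\,\id^{\perp}$ on ${\cal D}$. My first step would be to collapse Proposition~\ref{L-CC-riccati} in this special geometry: for a one-dimensional geodesic Riemannian flow one has $h=\tilde h=0$ (hence $H=\tilde H=0$), and $T=0$ since $\widetilde{\cal D}$ has rank one; consequently $\Psi=0$ and ${\rm Def}_{\cal D}H=0$ in \eqref{E-genRicN}. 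The partial Ricci formula then reduces to $r_{\cal D}=-\widetilde{\cal T}^{\flat}$, which by the identifications $r_{\cal D}=\epsilon_{N}(R_{N})^{\flat}$ and $\widetilde{\cal T}=\epsilon_{N}(\tilde T^{\sharp}_{N})^{2}$ is exactly the clean algebraic identity $R_{N} = -(\tilde T^{\sharp}_{N})^{2}$ on ${\cal D}$, with $\tilde T^{\sharp}_{N}$ skew-adjoint with respect to $g$.

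The main obstacle I anticipate is this final step: Einstein gives $\tr R_{N}=\Ric_{N}=c\,\epsilon_{N}$ pointwise, but pointwise proportionality of the symmetric endomorphism $R_{N}$ to the identity on ${\cal D}$ is a stronger, eigenvalue-level claim. To close the argument I would combine the algebraic identity $R_{N} = -(\tilde T^{\sharp}_{N})^{2}$ with two further inputs available under a (local) Killing field $N$ of constant length: the Bochner identity $|\nabla N|^{2} = \Ric(N,N)$, which here forces $\tr(\tilde T^{\sharp}_{N})^{2}$ to be a global constant, and the restriction of $\Ric=c\,g$ to ${\cal D}\times{\cal D}$, which I expect to force all non-zero eigenvalues of the skew operator $\tilde T^{\sharp}_{N}$ to share a common magnitude $\sqrt{c\,\epsilon_{N}/p}$. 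Once \eqref{E-1geod-Riem} is in hand, a second direct application of Corollary~\ref{P-flows} completes the proof.
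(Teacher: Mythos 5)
Your first paragraph reproduces what the paper actually does: the paper's entire proof is the sentence ``From Corollary \ref{P-flows} we immediately obtain the following'', and the Einstein hypothesis does immediately give $\Ric_{N}=c\,\epsilon_N=\const$ and $\Ric(X,N)=0$ for $X\in{\cal D}$, hence \eqref{geodriemflowiii}, \eqref{RicNconstpneq4}, \eqref{geodriemflowscalar}, and the constancy needed for the ``all $\Omega$'' clause. So the ${\bar g}^\top$ half of your argument is complete and matches the paper. You are also right that the remaining condition \eqref{E-1geod-Riem} is the genuine sticking point, and right that $R_N=-(\tilde T^\sharp_N)^2$ for a geodesic Riemannian flow (this is \eqref{E-RicNs1aa}$_1$ with $\tilde h_{sc}=0$, $H=0$). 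Up to this point you have been more careful than the paper, which silently treats \eqref{E-1geod-Riem} as part of the ``immediate'' deduction.

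The proposed closing step, however, would fail. The restriction of $\Ric=c\,g$ to ${\cal D}\times{\cal D}$ does not isolate $R_N$: for $X,Y\in{\cal D}$ one has $\Ric(X,Y)=\epsilon_N\,g(R_NX,Y)+\sum_i\epsilon_i\,g(R({\cal E}_i,X){\cal E}_i,Y)$, and the second sum is an independent curvature contribution, so Einstein constrains only the total and cannot force the eigenvalues of the skew operator $\tilde T^\sharp_N$ to share a common magnitude; the Bochner identity likewise only reproduces the trace condition you already have. A concrete obstruction: take $M=S^3(1)\times S^3(1)$ with the product metric (Einstein, $\Ric=2g$, $p=5>1$) and let $N$ be the Hopf field of the first factor. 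This is a unit Killing field, hence generates a geodesic Riemannian flow, yet $R_N$ has eigenvalues $1,1,0,0,0$ on ${\cal D}$ and is not proportional to $\id^\perp$, so \eqref{E-1geod-Riem} fails. Thus no argument can derive \eqref{E-1geod-Riem} from the stated hypotheses; granting Corollary \ref{P-flows}, this example actually contradicts the ${\bar g}^\perp$ half of the statement itself. In short: your proof is correct and complete for ${\bar g}^\top$-variations, but the ${\bar g}^\perp$ half has an unfixable gap at \eqref{E-1geod-Riem} --- the same gap the paper hides behind the word ``immediately''.
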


In \cite{rz-1} it was also proved that for metrics critical with respect to ${\bar g}^\perp$-variations if~$p$ is odd then $\Ric_{N}=0$ and $M$ splits, and if $\Ric_{N}\ne0$ then $p$ is even (and moreover, for $p\ne4$, $\Ric_{N}$ is a function of a point only).

The following proposition shows that manifolds with geodesic Riemannian flows critical for the action \eqref{E-Jmix} with respect to all volume-preserving variations are in fact metric products. One can similarly prove this statement for arbitrary variations of the metric.

\begin{prop}
Let $\widetilde{\cal D}$ be spanned by a unit vector field $N$ that generates a geodesic Riemannian flow on a pseudo-Riemannian manifold $(M^{p+1},g)$. If $g$ is a critical metric for the action \eqref{E-Jmix} with respect to all volume preserving variations, then ${\cal D}$ is integrable.
\end{prop}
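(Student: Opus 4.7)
The plan is to combine the Euler--Lagrange equations for volume-preserving variations and take a trace to force $\<\tilde T,\tilde T\>=0$. By Remark~\ref{remarkAllVariations}, criticality under all volume-preserving variations amounts to imposing \eqref{E-main-1i} and \eqref{E-main-2i} simultaneously with one common Lagrange multiplier $\lambda\in C^\infty(M)$ in place of both $\Sm^*_{\rm mix}(\Omega,g)$ and $\widetilde\Sm^*_{\rm mix}(\Omega,g)$. The strategy is that eliminating $\lambda$ and the Ricci term $\epsilon_N\Ric_N$ between these two equations should produce a scalar relation of the form $(p+c)\<\tilde T,\tilde T\>=0$ for some positive constant $c$.

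First I would specialize to a geodesic Riemannian $N$-flow, which kills $h$, $H$, $\tilde h$, $\tilde H$, $\tilde\tau_1$, $\tilde\tau_2$, and also $T$ (since $\widetilde{\cal D}$ is one-dimensional). Under these vanishings, \eqref{eq-ran-ex} collapses to the key identity $\epsilon_N\Ric_N=\Sm_{\rm mix}=\<\tilde T,\tilde T\>$. The simplified \eqref{E-main-1i} reads $\epsilon_N\bigl(R_N-(\tilde T^\sharp_N)^2\bigr)^\flat=\tfrac12(\epsilon_N\Ric_N-\lambda)\,g^\perp$, while the simplified \eqref{E-main-2i} reads $\epsilon_N\Ric_N+\lambda-4\<\tilde T,\tilde T\>=0$. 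Substituting the key identity into the latter immediately gives $\lambda=3\<\tilde T,\tilde T\>$.

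Next I would take the $g$-trace over ${\cal D}$ of the simplified \eqref{E-main-1i}. Using $\tr R_N|_{\cal D}=\Ric_N$ together with $\tr(\tilde T^\sharp_N)^2=-\epsilon_N\<\tilde T,\tilde T\>$ (which follows in the $1$-dimensional $\widetilde{\cal D}$ setting from $\widetilde{\cal T}=\epsilon_N(\tilde T^\sharp_N)^2$ and the dual of $\tr{\cal T}=-\<T,T\>$), one arrives at $\epsilon_N\Ric_N+\<\tilde T,\tilde T\>=\tfrac{p}{2}(\epsilon_N\Ric_N-\lambda)$. Plugging in $\lambda=3\<\tilde T,\tilde T\>$ and $\epsilon_N\Ric_N=\<\tilde T,\tilde T\>$ yields $(p+2)\<\tilde T,\tilde T\>=0$, hence $\<\tilde T,\tilde T\>=0$ since $p\ge 1$. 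In the Riemannian setting (which makes $\<\tilde T,\tilde T\>$ a positive definite quadratic form in $\tilde T$) this vanishing is equivalent to $\tilde T\equiv 0$, i.e., to integrability of ${\cal D}$.

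The main hazard is the sign bookkeeping in the passage between $(\tilde T^\sharp_N)^2$, $\widetilde{\cal T}$, and $\<\tilde T,\tilde T\>$, together with verifying that the trace of the ${\cal D}\times{\cal D}$-component of the left-hand side of \eqref{E-main-1i} is really as claimed. Structurally, though, the argument is just that two scalar relations in three unknowns---$\lambda$, $\epsilon_N\Ric_N$, $\<\tilde T,\tilde T\>$---combined with the identity $\epsilon_N\Ric_N=\<\tilde T,\tilde T\>$ coming from $\Sm_{\rm mix}=\epsilon_N\Ric_N$, reduce to one constraint that kills $\<\tilde T,\tilde T\>$. Note that \eqref{E-main-3i} plays no role; the conclusion comes from the trace relations alone.
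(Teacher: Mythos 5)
Your proposal is correct and follows essentially the same route as the paper: specialize the Euler--Lagrange equations for volume-preserving variations to the geodesic Riemannian flow, extract two scalar expressions for the common multiplier $\lambda$ (one from the trace of the ${\cal D}\times{\cal D}$ equation, one from the ${\widetilde{\cal D}}\times\widetilde{\cal D}$ equation), and observe that they are compatible only when $\<\tilde T,\tilde T\>=0$. The paper reaches the same pair $\lambda=\frac{p-4}{p}\<\tilde T,\tilde T\>$ and $\lambda=3\<\tilde T,\tilde T\>$ by first invoking the pointwise identity $R_N=-(\tilde T^\sharp_N)^2$ from Proposition~\ref{L-CC-riccati}, which is just the untraced version of your identity $\epsilon_N\Ric_N=\<\tilde T,\tilde T\>$, so the two computations coincide (and your remark that in the pseudo-Riemannian case one needs definiteness to pass from $\<\tilde T,\tilde T\>=0$ to $\tilde T=0$ is a point the paper itself glosses over).
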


\begin{proof}
Using Remark \ref{remarkAllVariations}, we can write the Euler-Lagrange equations for arbitrary volume preserving variations as follows:
\begin{eqnarray}
\label{E-main-1K}
 && \epsilon_{N} \big(R_N -(\tilde T^\sharp_N)^2 \big)^\flat = \frac12\,\big(\epsilon_{N} \Ric_{N} - \lambda \big)\,g^\perp
,\\
&& \Ric(X, N)= 0\quad ( X \in {\cal D} )
,\\
\label{E-main-2K}
 && \epsilon_{N} \Ric_{N} =  4\,\<\tilde T, \tilde T\> - \lambda ,
\end{eqnarray}
where $\lambda \in C^\infty(M)$ is an arbitrary function. From Proposition \ref{L-CC-riccati} we obtain $R_N =-(\tilde T^\sharp_N)^2$ and hence the trace of \eqref{E-main-1K} reads
\[
\lambda = \frac{p-4}{p} \< {\tilde T} , {\tilde T} \>.
\]
On the other hand, from \eqref{E-main-2K} we obtain $\lambda = 3 \< {\tilde T} , {\tilde T} \>$. The two equations for $\lambda$ have a solution only for $\< {\tilde T} , {\tilde T} \> =0$.
\end{proof}


\subsection{Contact metric structures} \label{SectionContact}

Contact manifolds come with a natural foliation given by the flowlines of the Reeb field. They also admit an (in general, non-unique) \textit{associated metric} of well examined properties - we show that for such metric one of the Euler-Lagrange equations, \eqref{E-main-0ii}, always holds. Then we examine the remaining Euler-Lagrange equations for ${\bar g}^\perp$- and ${\bar g}^\top$-variations, which generalize variations among associated metrics considered in \cite{b2010}.

Recall \cite{b2010} that a manifold $M^{2n+1}$ with a $1$-form $\eta$ such that
\[
 d\eta(\xi,X)=0\quad (X\in TM),\quad \eta(\xi)=1,
\]
is called a \textit{contact manifold}, and $\xi$ is called the \textit{characteristic vector field} (or the \textit{Reeb field}).
A Riemannian metric $g$ on a contact manifold $(M^{2n+1},\eta)$ is \textit{associated}
if there exists a $(1,1)$-tensor $\phi$ such that for all $X,Y \in TM$
\begin{eqnarray}\label{E-contact-metric1}
 \eta(X) = g(\xi,X) , \quad 
 d\eta(X,Y) = g(X,\phi(Y)),\quad \phi^2=-I +\eta\otimes\xi . 
\end{eqnarray}
The above $(\phi, \xi, \eta, g)$ is called a \textit{contact metric structure} on $M$.
For all contact manifolds we consider, let $\widetilde{\cal D}$ be spanned by $\xi$ and let ${\cal D}$ denote its orthogonal complement.

\begin{rem} \label{remarkpseudocontact}
\rm
While we shall consider only the Riemannian metric in this and the next section, there is a natural way to make a Riemannian contact manifold $(M,\eta, g)$ a pseudo-Riemannian contact manifold: by setting $g - 2\,\eta \otimes \eta$ as the new metric \cite{brunettipastore}. This transformation does not invalidate our main results: Proposition \ref{Kcontactarecritical} and Corollary \ref{contactTcritical}.
\end{rem}

\begin{prop} \label{E-main-3i-contact-metric-structure}
Let $(\phi, \xi, \eta, g)$ be a contact metric structure on $M$. Then \eqref{E-main-3i} is satisfied for $N = \xi$.
\end{prop}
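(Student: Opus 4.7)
The plan is to simplify equation \eqref{E-main-3i} by exploiting two features of the contact metric structure: the Reeb flow is geodesic, and the integrability tensor $\tilde T^{\sharp}_\xi$ is (up to a scalar) the endomorphism $\phi$. I would begin by noting that on any contact metric manifold $\nabla_\xi\xi = 0$ (e.g., by evaluating the identity $\nabla_X\xi = -\phi X - \phi h_\phi X$ at $X=\xi$ and using $\phi\xi = 0 = h_\phi\xi$). Hence the mean curvature of $\widetilde{\cal D}=\operatorname{span}(\xi)$ vanishes: $H = \nabla_\xi\xi = 0$, and \eqref{E-main-3i} reduces to the single condition $(\Div^\perp \tilde T^{\sharp}_\xi)|_{\cal D} = 0$.

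Next, I would compute $\tilde T^{\sharp}_\xi$ on ${\cal D}$ directly from the definition. For $X, Y \in {\cal D}$ the Cartan formula together with $\eta(X) = \eta(Y) = 0$ yields $\eta([X, Y]) = -d\eta(X, Y)$, which by \eqref{E-contact-metric1} equals $-g(X, \phi Y) = g(\phi X, Y)$. Thus
\[
g(\tilde T^{\sharp}_\xi X, Y) = \tfrac12 \eta([X, Y]) = \tfrac12 g(\phi X, Y),
\]
so $\tilde T^{\sharp}_\xi = \tfrac12 \phi$ on ${\cal D}$; since $\phi\xi = 0 = \tilde T^{\sharp}_\xi(\xi)$, this extends to a $(1,1)$-tensor identity on $TM$.

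The remaining task is to verify $(\Div^\perp \phi)|_{\cal D} = 0$. Differentiating the skew-symmetry $g(X,\phi Y) = -g(\phi X, Y)$ shows that each $(\nabla_Z \phi)$ is $g$-skew-symmetric; thus, for $X \in {\cal D}$,
\[
\sum_i \epsilon_i\, g((\nabla_{{\cal E}_i}\phi) X, {\cal E}_i) = -g(X, \tau), \qquad \tau := \sum_i \epsilon_i (\nabla_{{\cal E}_i}\phi){\cal E}_i,
\]
and it suffices to show $\tau \in \widetilde{\cal D}$. Using $(\nabla_\xi\phi)\xi = -\phi\nabla_\xi\xi = 0$, the partial trace $\tau$ coincides with the full-frame trace $\sum_\mu \epsilon_\mu (\nabla_{e_\mu}\phi)e_\mu$. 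The main obstacle is the last step: showing this trace is proportional to $\xi$ on a \emph{general} (not merely $K$-contact) contact metric manifold. I would handle it by invoking the classical contact metric identity to that effect, which can be derived from $d(d\eta) = 0$ together with the structure equation $\nabla_X\xi = -\phi X - \phi h_\phi X$ (with $h_\phi = \tfrac12\mathcal{L}_\xi\phi$); see, e.g., Blair's monograph on contact manifolds. Granted this, $g(\tau, X) = 0$ for all $X \in {\cal D}$, completing the argument.
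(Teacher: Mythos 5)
Your first two reductions coincide with the paper's: $H=\nabla_\xi\xi=0$ collapses \eqref{E-main-3i} to $(\Div^\perp\tilde T^{\sharp}_{\xi})|_{\cal D}=0$, and the Cartan formula identifies $\tilde T^{\sharp}_{\xi}$ with a constant multiple of $\phi$ (you obtain $\tfrac12\phi$ because you take $d\eta(X,Y)=X\eta(Y)-Y\eta(X)-\eta([X,Y])$, while the paper keeps Blair's factor $\tfrac12$ and gets $\tilde T^{\sharp}_{\xi}=\phi$; this is immaterial, since either way the claim reduces to $(\Div^\perp\phi)|_{\cal D}=0$). Your further reformulation via the $g$-skew-symmetry of $\nabla_Z\phi$ --- that it suffices to prove $\tau:=\sum_\mu\epsilon_\mu(\nabla_{e_\mu}\phi)e_\mu\in\widetilde{\cal D}$ --- is also correct.

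The genuine gap is the final step, which is the only non-routine content of the proposition, and which you discharge by ``invoking'' a classical identity with a derivation sketch that does not go through as stated. From $d(d\eta)=0$ together with $\delta\eta=-\Div\xi=0$ one gets only $\delta(d\eta)=\Delta\eta$, which gives no control over the ${\cal D}$-component of $\tau$ on a general (non-$K$-contact) contact metric manifold; and the structure equation $\nabla_X\xi=-\phi X-\phi h_\phi X$ governs $\nabla\xi$, not the trace of $\nabla\phi$. The identity you need is true, but proving it is precisely what the paper does: by \cite[Corollary~6.1]{b2010}, for $Y\in{\cal D}$ one has $2\,g((\nabla_{{\cal E}_i}\phi)(Y),{\cal E}_i)=g([\phi,\phi](Y,{\cal E}_i),\phi({\cal E}_i))$ (the correction terms drop because the fundamental $2$-form equals $d\eta$, which is closed, and $Y,{\cal E}_i\perp\xi$), and summing over a $\phi$-adapted orthonormal basis of ${\cal D}$ shows the right-hand side equals its own negative, hence vanishes. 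Unless you either reproduce this Nijenhuis-tensor computation or give a precise citation for the trace identity $\sum_\mu\epsilon_\mu(\nabla_{e_\mu}\phi)e_\mu=2n\,\xi$ on arbitrary contact metric manifolds, the argument is incomplete exactly where the work lies.
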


\begin{proof}
For a contact metric structure we have (see \cite{b2010})
\begin{equation}\label{H0contact}
 H = \nabla_{\xi}\,\xi =0.
\end{equation}
For all $X,Y$ such that $g(X,\xi) = g(Y, \xi) =0$
\[
 d\eta (X,Y) = - \frac{1}{2}\,\eta([X,Y]) = - \frac{1}{2}\,g([X,Y],\xi)\,\eta(\xi) = - \frac{1}{2}\,g([X,Y],\xi) = - g({\tilde T}^{\sharp}_{\xi}(X),Y).
\]
Hence, it follows from \eqref{E-contact-metric1}, that
\[
 g(X, \phi(Y)) = g(X, {\tilde T}^{\sharp}_{\xi}(Y) ),
\]
and since $\phi (\xi) =0$, we obtain the following equality 
\[
 \,{\tilde T}^{\sharp}_{\xi} = \phi.
\]
By \eqref{H0contact}, \eqref{E-main-3i} for $N=\xi$ reduces to
 ${\Div}^{\perp} ({\tilde T}^{\sharp}_{\xi}) |_{\cal D} = 0$,
which takes the following form:
\begin{equation*} 
\forall_{Y \in {\cal D}} \quad ({\Div}^{\perp}\phi)(Y) =0 .
\end{equation*}
For $Y \in \cal D$, the formula for contact metric structures in \cite[Corollary~6.1]{b2010} yields
\[
 2\,g((\nabla_{{\cal E}_i}\,\phi)(Y), {\cal E}_i) = g([\phi, \phi](Y, {\cal E}_i), \phi({\cal E}_i)),
\]
where
\[
 [\phi, \phi](X,Y) = \phi^{2}[X,Y] + [\phi(X), \phi(Y)] - \phi[\phi(X), Y] - \phi[ X, \phi(Y)] .
\]
As in \cite[Corollary~6.1]{b2010}, considering an orthonormal $\phi$-basis (see~\cite[p.~44]{b2010}), i.e. assuming that ${\cal E}_{i+p/2} = \phi ( {\cal E}_i )$ for $i=1 , \ldots , p/2$, we obtain that
\[
 \sum\nolimits_{i=1}^p g([\phi, \phi](Y, {\cal E}_i), \phi({\cal E}_i))
 = -\sum\nolimits_{i=1}^p g([\phi,\phi](Y, \phi({\cal E}_i)), \phi^2({\cal E}_i)) .
\]
Hence, $({\Div}^{\perp}\phi)(Y)=0$ and \eqref{E-main-3i} is satisfied.
\end{proof}

Note that condition \eqref{E-main-3i} depends on both the metric and the distribution $\cal D$. The following corollary gives some idea about its stability under a conformal change of the metric.

\begin{prop} \label{confchangedivtheta}
Let $(\phi, \xi, \eta, g)$ be a contact metric structure on
$M$. Let $\psi \in C^{\infty}(M)$ be a nonconstant function on $M$. Then on $(M, \bar g )$, where $\bar g = e^{-2 \psi} g$, equation \eqref{E-main-3i} is not satisfied.
\end{prop}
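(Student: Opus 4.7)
The plan is to pull everything back to the original contact metric structure via the conformal factor and then exploit the vanishing ${\rm Div}^\perp \phi|_{\cal D}=0$ from Proposition~\ref{E-main-3i-contact-metric-structure}. First I would note that, since conformal changes preserve orthogonality, ${\cal D}$ is the same for both $g$ and $\bar g$, while the $\bar g$-unit vector field spanning $\widetilde{\cal D}$ is $\bar N = e^{\psi}\xi$. A short computation along the lines of those in the proof of Proposition~\ref{E-main-3i-contact-metric-structure} shows that the integrability tensor $\tilde T$ is unchanged, and
\[
 \bar{\tilde T}^{\sharp,\bar g}_{\bar N} = e^{\psi}\phi\quad\text{on }{\cal D}.
\]
Applying the standard conformal formula $\bar\nabla_X Y = \nabla_X Y - X(\psi)Y - Y(\psi)X + g(X,Y)\nabla\psi$ and using $\nabla_\xi\xi = 0$ gives
\[
 \bar H = \bar\nabla_{\bar N}\bar N = e^{2\psi}(\nabla\psi)^\perp,
\]
so the second term of \eqref{E-main-3i} becomes, for $X\in{\cal D}$, $\;2\,e^{\psi}g(\phi(\nabla\psi),X)$, after using $\phi\xi=0$.

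The main work is to compute ${\rm Div}^{\perp,\bar g}(e^{\psi}\phi)|_{\cal D}$. I would do this in the $\bar g$-orthonormal frame $\bar{\cal E}_i = e^{\psi}{\cal E}_i$, where $\{{\cal E}_i\}$ is $g$-orthonormal on ${\cal D}$. Using the conformal formula above one obtains
\[
 (\bar\nabla_Z\phi)(X) = (\nabla_Z\phi)(X) + X(\psi)\phi Z - (\phi X)(\psi)Z + g(Z,\phi X)\nabla\psi - g(Z,X)\phi\nabla\psi,
\]
together with a Leibniz term from differentiating the scalar $e^{\psi}$. Summing over $i$ and using the antisymmetry of $\phi$ with respect to $g$, the facts $\phi{\cal E}_i\in{\cal D}$ and $\phi\nabla\psi = \phi((\nabla\psi)^\perp)$, the contact identity $\sum_i g((\nabla_{{\cal E}_i}\phi)(X),{\cal E}_i)=0$, and $\sum_i g({\cal E}_i,{\cal E}_i) = 2n$, one finds
\[
 ({\rm Div}^{\perp,\bar g}(e^{\psi}\phi))(X) = (2n-3)\,e^{\psi}\,g(\phi\nabla\psi,X).
\]
Adding the $\bar H$-term, equation \eqref{E-main-3i} for $\bar g$ reduces on ${\cal D}$ to
\[
 (2n-1)\,e^{\psi}\,g(\phi\nabla\psi,X) = 0\qquad (X\in{\cal D}).
\]

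Since $\phi$ is a $g$-isometry on ${\cal D}$ with $\phi^2=-I$ there, the above forces $(\nabla\psi)^\perp \equiv 0$, i.e.\ $d\psi = f\eta$ for some smooth $f$. The final step uses the contact condition: applying $d$ yields $0=df\wedge\eta + f\,d\eta$, and evaluating on a pair $X,Y\in{\cal D}$ gives $f\,d\eta(X,Y)=0$; non-degeneracy of $d\eta|_{\cal D}$ then forces $f=0$, so $d\psi=0$, contradicting the assumption that $\psi$ is nonconstant. The main obstacle is the bookkeeping in the divergence calculation, where the Leibniz term from $e^{\psi}$ and each of the five correction terms in $\bar\nabla\phi$ must be paired carefully with the rescaled frame $\bar{\cal E}_i$; the rest of the argument is essentially algebraic once ${\rm Div}^\perp\phi|_{\cal D}=0$ is invoked.
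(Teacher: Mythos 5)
Your proof is correct and follows essentially the same route as the paper: pull back to the contact metric structure via $\bar N=e^{\psi}\xi$, compute the conformally transformed divergence in the rescaled frame using ${\rm Div}^{\perp}\phi|_{\cal D}=0$, and arrive at the same nonzero multiple $(2n-1)e^{\psi}g(\phi\nabla\psi,X)$ (the paper's $(p-1)$ with $p=2n$), forcing $(\nabla\psi)^{\perp}=0$. The only (inessential) difference is the last step: you rule out $d\psi=f\eta$ via $d^{2}\psi=0$ and non-degeneracy of $d\eta|_{\cal D}$, whereas the paper invokes the bracket-generating property of ${\cal D}$ — both are valid.
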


\proof
Let $\bar \nabla$ be the Levi-Civita connection on $(M, \bar g )$. Then for all $X,Y \in TM$ we have
\[
{\bar \nabla}_X Y = \nabla_{X}Y - (X \psi)(Y) - (Y\psi)(X) + g(X,Y) \nabla \psi,
\]
and also for any $(1,1)$-tensor $S$
\begin{eqnarray*}
({\bar \nabla}_X S)(Y) \eq {\bar \nabla}_X S(Y) - S( {\bar \nabla}_X Y ) \\
\eq \nabla_{X} S(Y) - (X \psi) S(Y) - (S(Y)\psi)\,X + g(X, S(Y)) \nabla \psi \\
&& - S(\nabla_X Y) + ( X \psi )S(Y) + (Y\psi) S(X) - g(X,Y) S(\nabla\psi) \\
\eq ({\nabla}_X S)(Y) - (S(Y)\psi) X + g(X, S(Y)) \nabla \psi + (Y\psi) S(X) - g(X,Y) S(\nabla\psi).
\end{eqnarray*}
On $(M, {\bar g})$ equation \eqref{E-main-3i} takes the following form:
\begin{equation} \label{E-main-3i-barg}
 (\Div_{\bar g}^{\perp} { {\tilde T}_{ \bar N }^{\sharp}})(Y) + 2\,{\bar g}( {\tilde T}_{ \bar N }^{\sharp} H_{\bar g} , Y ) =0,
\end{equation}
where $\bar N$ is the $\bar g$-unit normal vector along $\widetilde{\cal D}$ and ${\tilde T}^\sharp$ is the action of the ``musical isomorphism" with respect to $\bar g$ on $\tilde T$.


Since ${\bar g}(\xi , \xi) = e^{-2\psi} g(\xi ,\xi)=e^{-2 \psi}$, we see that ${\bar N} = e^{\psi} \xi$.
Note that ${\tilde T}(X,Y) = \frac{1}{2}\,[X,Y]^{\top}$ ($X,Y \in \cal D$) is invariant with respect to a conformal change of metric; it also follows that the operators ${\tilde T}^\sharp$ defined with respect to $g$ and $\bar g$ coincide. We have
\[
 2\,{\bar g}( {\tilde T}^{\sharp}_{ \bar N } X, Y ) = {\bar g}( [X,Y], \bar N ) =
 {\bar g}( [X,Y], e^{\psi} \xi ) = 2\,e^{\psi} {\bar g}( {\tilde T}^{\sharp}_{ \xi } X, Y ),
\]
and so ${\tilde T}^{\sharp}_{ \bar N } = e^{\psi} {\tilde T}^{\sharp}_{ \xi }$.

Let ${\cal E}_i$ be a $g$-orthonormal basis of $\cal D$. Then $e^\psi {\cal E}_i$ is a $\bar g$-orthonormal basis of $\cal D$.
We have
\begin{eqnarray*}
(\Div_{\bar g}^{\perp} { {\tilde T}_{ \bar N }^{\sharp}})(Y) \eq (\Div_{\bar g}^{\perp} ( e^{\psi} {\tilde T}_{ \xi }^{\sharp} ) )(Y) \\
 \eq e^{\psi} (\Div_{\bar g}^{\perp} { {\tilde T}_{ \xi }^{\sharp}})(Y)
 +\sum\nolimits_{i} \epsilon_i e^{\psi} {\cal E}_i ( e^{\psi} ) \cdot {\bar g}( {\tilde T}_{ \xi }^{\sharp} (Y) , e^{\psi } {\cal E}_i  )  \\
 \eq e^{\psi} (\Div_{\bar g}^{\perp} { {\tilde T}_{ \xi }^{\sharp}})(Y) + e^{\psi } g( {\tilde T}_{ \xi }^{\sharp} Y , \nabla \psi  )
\end{eqnarray*}
and
\begin{eqnarray*}
 (\Div_{\bar g}^{\perp} {{\tilde T}_{\xi}^{\sharp}})(Y) \eq \sum\nolimits_{i} {\bar g}(({\bar \nabla}_{e^\psi {\cal E}_i}{\tilde T}_{\xi}^{\sharp} )(Y), e^{\psi} {\cal E}_i ) = \sum\nolimits_{i} g( ({\bar \nabla}_{ i } {\tilde T}_{\xi}^{\sharp} ) (Y) , {\cal E}_i ) \\
 \eq (\Div^{\perp} {\tilde T}_{\xi}^{\sharp})(Y) + \sum\nolimits_{\,i} \big( - ( {\tilde T}_{\xi}^{\sharp}Y \psi) g( {\cal E}_i, {\cal E}_i )
 + g({\cal E}_i, {\tilde T}_{\xi}^{\sharp} Y ) g( \nabla \psi, {\cal E}_i ) \\
 && +\,(Y\psi) g({\tilde T}_{\xi}^{\sharp}{\cal E}_i, {\cal E}_i) - g(Y, {\cal E}_i) g({\tilde T}_{\xi}^{\sharp}\nabla\psi, {\cal E}_i)\big) \\
 \eq - p ( {\tilde T}_{\xi}^{\sharp} Y \psi ) + g( \nabla \psi , {\tilde T}_{\xi}^{\sharp} Y ) - g( Y , {\tilde T}_{\xi}^{\sharp}\nabla\psi) \\
 \eq - p ( {\tilde T}_{\xi}^{\sharp} Y \psi ) + 2g( \nabla \psi , {\tilde T}_{\xi}^{\sharp} Y )
 = (2-p)\,g( \nabla \psi, {\tilde T}_{\xi}^{\sharp} Y )
\end{eqnarray*}
for all $Y \in \cal D$.
 On the other hand, $H_{\bar g} = ({\bar \nabla}_{ \bar N } { \bar N } )^{\perp}$; hence,
\begin{eqnarray*}
 H_{\bar g} \eq ({\bar \nabla}_{e^{\psi} \xi} e^{\psi} \xi)^{\perp} = \big( e^{2\psi} {\bar \nabla}_{\xi} \xi + e^{\psi} (\xi e^{\psi}) \xi \big)^{\perp} \\
 \eq e^{2\psi} (H - (2(\xi \psi) \xi)^{\perp} + g(\xi , \xi)(\nabla \psi)^{\perp}) = e^{2\psi} (\nabla \psi)^{\perp},\\
 {\bar g}({\tilde T}_{ \bar N }^{\sharp} H_{\bar g}, Y ) \eq  e^{-2\psi}g( {\tilde T}_{ e^{\psi } \xi}^{\sharp}( e^{2\psi} \nabla \psi), Y)\\
 \eq e^{\psi} g( {\tilde T}_{ \xi}^{\sharp}( \nabla \psi), Y )
 = - e^{\psi } g( {\tilde T}_{ \xi}^{\sharp} (Y), \nabla \psi  ).
\end{eqnarray*}
Using all the above, we can write the left-hand side of \eqref{E-main-3i-barg} as
\begin{eqnarray*}
 &&\hskip-8mm \frac{1}{2}\,(\Div_{\bar g}^{\perp} {{\tilde T}_{\bar N}^{\sharp}})(Y)
 +{\bar g}({\tilde T}_{\bar N}^{\sharp} H_{\bar g}, Y)\\
 && = \frac{1}{2}\,e^{\psi}(2-p)\,g(\nabla \psi, {\tilde T}_{\xi}^{\sharp} Y)
  +\frac{1}{2}e^{\psi}\,g(\nabla \psi, {\tilde T}_{\xi}^{\sharp} Y) - e^{\psi} g({\tilde T}_{ \xi}^{\sharp}(Y), \nabla\psi) \\
 && = \big( \frac{2-p}{2} + \frac{1}{2} - 1 \big) e^{\psi}\,g({\tilde T}_{\xi}^{\sharp} (Y), \nabla\psi)
 = \frac{1-p}{2} e^{\psi}\,g({\tilde T}_{\xi}^{\sharp}(Y), \nabla \psi ).
\end{eqnarray*}
Note that $p\ge2$ on contact metric manifolds, and since the image of ${\tilde T}_{\xi}^{\sharp} = -\phi$ is the whole ${\cal D}$, for any $\psi$ such that $(\nabla \psi)^{\perp}\neq 0$ there exists $Y\in\cal D$
such that $g({\tilde T}_{\xi}^{\sharp}(Y), \nabla\psi)\ne0$ and \eqref{E-main-3i} is not satisfied. Finally, if $(\nabla \psi)^{\perp} = 0$, then from the fact that $\cal D$ is bracket generating, it follows that $\psi$ is in fact constant on $M$.
\qed

\smallskip

On any contact manifold there exists a (non-unique) contact metric structure, see \cite{b2010}.
Among them there is a class particularly interesting from the geometric point of view.

\begin{defn}[\cite{b2010}]\rm
A contact metric structure for which $\xi$ is Killing is called $K$-\textit{contact}.
\end{defn}

\begin{prop} \label{Kcontactarecritical}
Any $K$-contact metric $g$ is critical for the action \eqref{E-Jmix-Ncontact}, with respect to both ${\bar g}^\perp$- and ${\bar g}^\top$-variations, for all $\Omega \subset M$.
\end{prop}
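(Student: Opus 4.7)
\smallskip\noindent\textbf{Proof proposal.}
The plan is to reduce everything to Corollary \ref{P-flows} together with Proposition \ref{E-main-3i-contact-metric-structure}. A $K$-contact structure has $\xi$ a unit Killing field, so I would first verify that $\xi$ generates a geodesic Riemannian flow. Geodesic follows from \eqref{H0contact}, since $H = \nabla_\xi\xi = 0$ on every contact metric manifold. Riemannian follows because $\mathcal{L}_\xi g = 0$: for $X,Y\in\mathcal{D}$,
\[
 \tilde h(X,Y) = \tfrac12\,g(\nabla_XY+\nabla_YX,\,\xi)\,\xi
 = -\tfrac12\,(\mathcal{L}_\xi g)(X,Y)\,\xi = 0,
\]
using $\nabla_\xi\xi=0$. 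Hence Corollary \ref{P-flows} applies with $N=\xi$, $p=2n$, $\epsilon_N=1$.

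Next I would check the three conditions \eqref{E-1geod-Riem}, \eqref{geodriemflowiii}, \eqref{RicNconstpneq4} by invoking the standard $K$-contact identities (see \cite{b2010}):
\[
 R(X,\xi)\xi = X\quad (X\in\mathcal{D}),\qquad
 \Ric(X,\xi) = 2n\,\eta(X),\qquad \Ric_\xi = 2n.
\]
The first gives $R_\xi = \id^\perp$ on $\mathcal{D}$, so $R_\xi = (1/p)\Ric_\xi\,\id^\perp$, i.e.\ \eqref{E-1geod-Riem}. The second, restricted to $X\in\mathcal{D}$, gives \eqref{geodriemflowiii}. The third shows $\Ric_\xi=\const$, giving both \eqref{RicNconstpneq4} and, via \eqref{geodriemflowscalar}, the ${\bar g}^\top$-case. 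Finally, the off-diagonal equation \eqref{E-main-3i} for ${\bar g}^\perp$-variations holds automatically on any contact metric structure by Proposition \ref{E-main-3i-contact-metric-structure}, so no separate verification is required.

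The last thing to check is that criticality holds for \emph{all} $\Omega\subset M$, which by Corollary \ref{P-flows} requires $\Sm^*_{\rm mix}$ and $\widetilde{\Sm}^*_{\rm mix}$ to be constant. From \eqref{Sstarflow}--\eqref{tildeSstarflow} these are built from $\Ric_\xi$, $N(\tilde\tau_1)$, $\tilde\tau_2$, $\<\tilde T,\tilde T\>$ and $\Div H$. On a $K$-contact manifold $H=0$ and $\tilde h=0$ force $\tilde\tau_1=\tilde\tau_2=0$, and the identity $\tilde T^\sharp_\xi=\phi$ from the proof of Proposition \ref{E-main-3i-contact-metric-structure} yields $\<\tilde T,\tilde T\>=\tfrac12\tr(\phi^2)=n$, constant. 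Together with $\Ric_\xi=2n$ this makes both starred curvatures constant, completing the argument.

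I expect no serious obstacle: the whole proof is a bookkeeping exercise cascading down from the two earlier results. The only mildly delicate point is computing $\<\tilde T,\tilde T\>$ correctly using the adapted-frame conventions (signs from $\epsilon_i$, the factor $\tfrac12$ in the definition of $T$, and the identification $\tilde T^\sharp_\xi=\phi$), so one should double-check that this trace is indeed constant rather than merely tensorial.
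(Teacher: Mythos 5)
Your argument is correct and follows essentially the same route as the paper: reduce to Corollary \ref{P-flows} via $H=0$ and $\tilde h=0$, invoke the $K$-contact identities (the paper cites \cite[Theorem~7.2]{b2010}) for \eqref{E-1geod-Riem} and constancy of $\Ric_\xi$, use Proposition \ref{E-main-3i-contact-metric-structure} for the off-diagonal equation \eqref{geodriemflowiii}, and conclude the ``all $\Omega$'' claim from constancy of $\Sm^*_{\rm mix}$ and $\widetilde{\Sm}^*_{\rm mix}$. The only slip is the value $\<\tilde T,\tilde T\>=n$: with the paper's conventions one gets $\<\tilde T,\tilde T\>=p=2n$ (see the computation in Proposition \ref{contactTcritical}), but since only constancy of this quantity is needed, the conclusion is unaffected.
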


\begin{proof}
We have already seen in \eqref{H0contact} that the integral curves of $\xi$ are geodesics for the contact metric structure. On the other hand, a nonsingular Killing vector field defines a Riemannian flow ($\tilde h=0$). Thus, in case of a $K$-contact structure, we can use Proposition \ref{P-flows}.

By \cite[Theorem~7.2]{b2010}, if $(M,g)$ is a $K$-contact manifold then \eqref{E-1geod-Riem} is satisfied with $\Ric_{N} = p$.
As was shown in Proposition~\ref{E-main-3i-contact-metric-structure}, also \eqref{geodriemflowiii} holds. The last statement follows from the fact that both $\Sm^*_{\rm mix}$ and $\widetilde{\Sm}^*_{\rm mix}$ are constant.
\end{proof}

In \cite{b2010}, the action \eqref{E-Jmix}, which reduces to 
\begin{equation}\label{E-Jmix-Ncontact}
 J_{\rm mix,\widetilde{\cal D},\Omega}:\ g \rightarrow  \int_{\Omega}\Ric_{N} (g) \,{\rm d}\vol_g
\end{equation}
has been studied on the set of metrics associated to a given contact form.

\begin{defn}[\cite{b2010}, p.~24]\rm
A contact structure is \textit{regular} if $\xi$ is regular as a vector field, that is, every point of the manifold has a neighborhood such that any integral curve of $\xi$ passing through the neighborhood passes through only once.
\end{defn}

\begin{thm}[see Theorem~10.12 in \cite{b2010}] \label{blair1012}
An associated metric $g$ on a compact regular contact manifold $(M, \eta)$ is critical for the action \eqref{E-Jmix-Ncontact} considered on the set of metrics associated to $\eta$ if and only if it is $K$-contact.
\end{thm}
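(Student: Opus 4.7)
The plan is to handle the two implications separately. The sufficiency will follow immediately from Proposition \ref{Kcontactarecritical}; the necessity requires analyzing the admissible infinitesimal variations through associated metrics and then using the contact-metric identities to turn the Euler--Lagrange condition into the $K$-contact condition $\tilde h = 0$.

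For $K$-contact $\Rightarrow$ critical: On any contact manifold the Riemannian volume form is proportional to $\eta\wedge(d\eta)^n$ and so depends only on $\eta$; hence every variation $g_t$ among metrics associated to $\eta$ automatically preserves $\vol_{g_t}(M)$. Since $g_t(\xi,\xi)=\eta(\xi)=1$ and $g_t(\xi,X)=\eta(X)=0$ for $X\in\ker\eta={\cal D}$, the distributions $\widetilde{\cal D}=\langle\xi\rangle$ and ${\cal D}$ remain $g_t$-orthogonal and $g_t|_{\widetilde{\cal D}\times\widetilde{\cal D}}$ is $t$-independent. Thus associated-metric variations form a subclass of volume-preserving $g^\perp$-variations, and hence of ${\bar g}^\perp$-variations. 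Proposition \ref{Kcontactarecritical} then yields immediately that every $K$-contact metric is critical for $J_{\rm mix,\widetilde{\cal D},M}$ within this restricted class.

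For critical $\Rightarrow$ $K$-contact: Differentiating the defining conditions of associated metrics at $t=0$ yields
\begin{equation*}
 B(\xi,X)=0\ (X\in TM),\qquad B(\phi X,\phi Y)=-B(X,Y)\ (X,Y\in{\cal D}),
\end{equation*}
where $B=\dt g_t|_{t=0}$. Conversely, by the Boothby--Wang description (where the regularity hypothesis is used to make $M$ a principal $S^1$-bundle over a compact symplectic manifold), every smooth symmetric $(0,2)$-tensor $B$ satisfying the two conditions above arises as $\dt g_t|_{t=0}$ for some smooth family of associated metrics $g_t$. By Proposition \ref{E-main-3i-contact-metric-structure} the $\widetilde{\cal D}\times{\cal D}$-component \eqref{E-main-0ii} of the Euler--Lagrange equation is automatic, so criticality reduces to the vanishing of the pairing between $B|_{{\cal D}\times{\cal D}}$ and the ${\cal D}\times{\cal D}$-part of the gradient (the left-hand side minus the right-hand side of \eqref{E-main-0i}) for all admissible $B$. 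Since $B|_{{\cal D}\times{\cal D}}$ ranges over all $\phi$-anti-invariant symmetric tensors on ${\cal D}$, this is equivalent to vanishing of the $\phi$-anti-invariant component of that gradient.

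The main obstacle is identifying this anti-invariant component with the Blair tensor $h^*:=\tfrac12\mathcal{L}_\xi\phi$. The standard identity $\nabla_X\xi=-\phi X-\phi h^*X$, in which $h^*$ is self-adjoint, traceless, and anti-commutes with $\phi$, gives $\tilde A_\xi=\phi h^*$; thus $\tilde h$ itself is the $\phi$-anti-invariant symmetric tensor $g(\phi h^*\,\cdot\,,\,\cdot\,)\otimes\xi$ on ${\cal D}$, and $K$-contactness amounts to $h^*=0$. Substituting $H=0$ and ${\tilde T}^\sharp_\xi=\phi$ into \eqref{E-main-1i}, and observing that the scalar multiple of $g^\perp=g|_{{\cal D}}$ on the right-hand side is $\phi$-invariant (because $g(\phi X,\phi Y)=g(X,Y)$ on ${\cal D}$ for any associated metric) and therefore drops out of the anti-invariant projection, one reduces the condition to a pointwise identity quadratic in $h^*$ whose only symmetric, trace-free, $\phi$-anti-commuting solution is $h^*\equiv 0$. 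This is the $K$-contact condition and completes the proof.
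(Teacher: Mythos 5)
First, a point of order: the paper does not prove this statement at all --- it is imported verbatim from Blair's book (Theorem~10.12 there) and used as a black box, so there is no internal proof to compare yours against; I am judging your argument on its own merits. Your sufficiency direction is fine and agrees with the paper's surrounding discussion: associated-metric variations keep $g(\xi,\cdot)=\eta$ fixed and preserve the volume form $\eta\wedge(d\eta)^n$, hence form a subclass of the volume-preserving $g^\perp$-variations and of the ${\bar g}^\perp$-variations, so Proposition~\ref{Kcontactarecritical} applies. (Your description of the tangent space of the set of associated metrics is also correct, but its surjectivity does not need regularity or compactness --- the space of associated metrics is a bundle of contractible Siegel domains over $M$ on any contact manifold --- which is a first sign that you have located the hypotheses in the wrong place.)

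The necessity direction has a genuine gap at the last step. With $H=0$, $\tilde A_\xi=\phi h^*$, ${\tilde T}^\sharp_\xi=\phi$ and $\nabla_\xi\phi=0$, the identity \eqref{E-RicNs1aa}$_1$ gives $R_N=\id^\perp-(h^*)^2+\phi\,\nabla_\xi h^*$, so the ${\cal D}\times{\cal D}$ gradient appearing in \eqref{E-main-1i} is $2\,\id^\perp-2h^*+\phi\,\nabla_\xi h^*$ minus a multiple of $g^\perp$. The multiples of $\id^\perp$ are $\phi$-invariant and indeed drop out of the anti-invariant projection, but \emph{both} $-2h^*$ and $\phi\,\nabla_\xi h^*$ anticommute with $\phi$ and survive it. The resulting Euler--Lagrange equation for variations within the associated metrics is therefore not a pointwise algebraic identity quadratic in $h^*$; it is the first-order equation $\nabla_\xi h^*=2h^*\phi$ along the Reeb flow (this is exactly Blair's Theorem~10.11, valid on any contact manifold). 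That equation does not force $h^*=0$: the standard contact metric structure on $\RR^{2n+1}$ --- the very structure in the paper's Example, which has $\tilde A\neq0$ --- satisfies it without being $K$-contact. So your claimed pointwise rigidity is false, and the compactness and regularity hypotheses, which you spend only on parametrizing the admissible variations, are in fact needed precisely at this point: Blair's proof uses the Boothby--Wang circle fibration to show that $\nabla_\xi h^*=2h^*\phi$ forces $h^*=0$ when the orbits of $\xi$ are the fibres of an $S^1$-bundle. Without that step your argument does not close.
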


We have $g(\xi, \xi)=1$ for any associated metric and the volume form of associated metric on a contact manifold can be expressed only in terms of $\eta$ and $d \eta$. Therefore, variations of the metric restricted to the set of all associated metrics form a subclass of the volume preserving $g^\perp$-variations (which form a subclass of ${\bar g}^\perp$-variations).
Hence, on compact regular contact manifolds Proposition \ref{Kcontactarecritical} and Theorem \ref{blair1012} together give the following characteristic of some critical metrics -- for a larger space of variations.

\begin{cor}
Let $(M, \eta)$ be a compact regular contact manifold and let $g$ be an associated metric. Then $g$ is critical for the action \eqref{E-Jmix-Ncontact} for ${\bar g}^\perp$-variations if and only if $g$ is $K$-contact.
\end{cor}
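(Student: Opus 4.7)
The plan is to prove the two implications separately. The forward direction, that a $K$-contact associated metric is critical for $J_{\rm mix,\widetilde{\cal D},\Omega}$ under ${\bar g}^\perp$-variations, is precisely Proposition \ref{Kcontactarecritical} specialized to the case of a contact manifold (which guarantees $p\ge 2$ and $\xi$ generating a geodesic Riemannian flow), so nothing new needs to be done there. The nontrivial content lies in the converse, and the strategy is to package the paragraph immediately preceding the corollary into a rigorous inclusion of variation classes, then invoke Theorem \ref{blair1012}.

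For the converse, I would first observe that the set of metrics associated to the fixed contact form $\eta$ sits inside the class of ${\bar g}^\perp$-variations of any fixed associated $g$. Indeed, for any associated metric $g_t$ one has $g_t(\xi,\xi)=1$ and $g_t(\xi,X)=\eta(X)$ for all $X\in TM$, so the matrix block $g_{t\,|\,\widetilde{\cal D}\times\widetilde{\cal D}}$ is constant and $\xi$ remains $g_t$-orthogonal to ${\cal D}(t)$; thus the family $g_t$ is a $g^\perp$-variation in the sense of \eqref{e:Var}. Moreover, as recalled, the volume form ${\rm d}\vol_{g_t}=\frac{1}{n!\,2^n}\eta\wedge(d\eta)^n$ is independent of $t$, so $g_t$ is in fact a volume-preserving $g^\perp$-variation on any $\Omega\subset M$. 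By Remark \ref{remarkbargvariations}, any such variation is a ${\bar g}^\perp$-variation (with $\phi_t^\perp\equiv 1$).

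Assuming now that $g$ is critical for $J_{\rm mix,\widetilde{\cal D},\Omega}$ with respect to all ${\bar g}^\perp$-variations, the inclusion just established implies that $\tfrac{d}{dt}J_{\rm mix,\widetilde{\cal D},\Omega}(g_t)_{|\,t=0}=0$ for every smooth family of associated metrics $g_t$ with $g_0=g$ supported in $\Omega$. Since the contact manifold is compact regular, we may take $\Omega=M$, so $g$ is critical for $J_{\rm mix,\widetilde{\cal D},M}$ on the space of associated metrics. Theorem \ref{blair1012} then forces $g$ to be $K$-contact, completing the converse.

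The main obstacle I anticipate is really a bookkeeping point rather than a computational one: making sure that the associated-metric variations truly fall under the Definition of $g^\perp$-variation, because $g^\perp$-variations a priori allow the ${\cal D}\times\widetilde{\cal D}$ block of $g_t$ to change while only fixing $g_t|_{\widetilde{\cal D}\times\widetilde{\cal D}}$, whereas for associated metrics the condition $g_t(\xi,X)=\eta(X)$ automatically fixes the off-diagonal block as well. This is a strengthening, not an obstruction, so the inclusion of variation classes goes through, and the remaining input is just the citation to Blair's theorem.
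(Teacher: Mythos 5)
Your proposal is correct and follows essentially the same route as the paper: the backward implication is Proposition \ref{Kcontactarecritical}, and the forward implication comes from observing that variations within the set of associated metrics are volume-preserving $g^\perp$-variations (hence ${\bar g}^\perp$-variations with $\phi_t^\perp\equiv 1$) and then applying Theorem \ref{blair1012}. The extra bookkeeping you supply on the off-diagonal block and the volume form only makes explicit what the paper states in the paragraph preceding the corollary.
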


The above result indicates that while \eqref{E-main-3i} holds for all contact metric structures, on compact regular manifolds only $K$-contact structures among them satisfy the remaining Euler-Lagrange equations \eqref{E-main-1i} and \eqref{E-main-2i}. This may be the case also on non-compact manifolds, as for a contact metric structure on $\RR^{3}$ defined in \cite[pp.~121--122]{b2010}, considered in the following example.

\begin{example} \rm
Let
\[
 \eta = \frac{1}{2}\,(dz - y\,dx), \quad
 g = \frac{1}{4} \bigg(\begin{array}{ccc}
   1 + y^2 + z^2 & z & -y \\
   z & 1 & 0 \\
   -y & 0 & 1
 \end{array}\bigg).
\]
Using an adapted orthonormal frame:
 $\xi=2\,\frac{\partial}{\partial z}, E_1 = 2(\frac{\partial}{\partial x} -z\,\frac{\partial}{\partial y} +y\,\frac{\partial}{\partial z}),E_2
 = 2\,\frac{\partial}{\partial y}$,
 one can show that in $\{E_1, E_2\}$ basis of ${\cal D}$ we have
\[
 \tilde A = \bigg(\begin{array}{cc}
   0 & -1 \\
   -1 & 0
 \end{array}\bigg),
 \quad
 {\tilde T}^{\sharp}_{\xi} =\bigg(\begin{array}{cc}
   0 & 1 \\ -1 & 0 \end{array}\bigg).
\]
We have $\Ric_{N} = 0$ \cite{b2010}, $H=0$, ${\tilde \tau}_1=0$ and $\big(R_N +\tilde A_N^2 -(\tilde T^\sharp_N)^2 +[\tilde T_N^\sharp,\tilde A_N]\big)^\flat$ is not conformal, hence \eqref{E-main-1i} is not satisfied, although both \eqref{E-main-3i} and \eqref{E-main-2i} hold.

\end{example}

Flowlines of Reeb vector fields on contact manifolds are often described as having ``maximally non-integrable" orthogonal distributions.
We can give this notion a precise meaning by conside\-ring the following action:
\begin{equation} \label{JtildeT}
 J_{{\tilde T},\Omega}:\ g \rightarrow  \int_{\Omega}\< \tilde T, \tilde T \>\,{\rm d}\vol_g ,
\end{equation}
and showing that contact metric structures are its critical points. Note that \eqref{JtildeT} is the total norm of the integrability tensor of the (varying) orthogonal complement of a fixed distribution $\widetilde{\cal D}$.

\begin{prop}[\bf Euler-Lagrange equations]
A metric $g\in{\rm Riem}(M,\widetilde{\cal D},{\cal D})$ is critical for the action \eqref{JtildeT} with respect to ${\bar g}^\perp$-variations if and only if
\begin{eqnarray} \label{ELtildeT1}
 && 2\,\widetilde{\cal T}^{\flat} =-\big(\frac{1}{2}\,\< \tilde T, \tilde T \> + {\tilde T}^*(\Omega, g)\big)\,g^{\perp}\ 
 , \\
 \label{ELtildeT2}
&& \Lambda_{\tilde\theta, \theta - \alpha} = ( \Div {\tilde\theta} )_{ | \widetilde{\cal D} \times {\cal D}} + ( \Div {\tilde \theta} )_{ |  {\cal D} \times \widetilde{\cal D} } , 
\end{eqnarray}
where
\[
 {\tilde T}^* =  \frac{4-p}{2\,p}\,\< \tilde T, \tilde T \>,\qquad
\]
A metric $g\in{\rm Riem}(M,\widetilde{\cal D},{\cal D})$ is critical for the action \eqref{JtildeT} with respect to ${\bar g}^\top$-variations if and only if
\begin{eqnarray}
\label{ELtildeT3}
&& \Phi_{\tilde T} = \big(\frac{1}{2}\,\<\tilde T,\tilde T\> - T^*(\Omega, g)\big)\,\tilde g ,
 \ \ ({\rm for}~g^\perp\mbox{\rm-variations}),
\end{eqnarray}
where
\[
 T^* = \frac{2+n}{2\,n}\,\< \tilde T, \tilde T \> .
\]
\end{prop}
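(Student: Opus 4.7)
The plan is to apply the variation formulas from Propositions~\ref{propvar1} and \ref{propvar2} to the integrand $\<\tilde T,\tilde T\>$ and then upgrade the result from $g^\perp$-/$g^\top$- to $\bar g^\perp$-/$\bar g^\top$-variations by deriving analogs of Proposition~\ref{P-1-5} for the action $J_{\tilde T,\Omega}$.

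First, for an ordinary $g^\perp$-variation $g_t$ with $B=\dt g_t$, I would combine \eqref{E-tildeT-gen} with the volume-form variation $\dt d\vol_{g_t}=\tfrac12\<g,B\>\,d\vol_{g_t}$. Since $B$ vanishes on $\widetilde{\cal D}\times\widetilde{\cal D}$, $\<g,B\>=\<g^\perp,B\>$, and after integration over $\Omega$ the divergence term in \eqref{E-tildeT-gen} drops out, leaving
\[
\dt J_{\tilde T,\Omega}(g_t)|_{t=0}=\int_\Omega \big\<2\,\tilde{\cal T}^\flat+2\,\Lambda_{\tilde\theta,\theta-\alpha}-2\big[(\Div\tilde\theta)_{|\widetilde{\cal D}\times{\cal D}}+(\Div\tilde\theta)_{|{\cal D}\times\widetilde{\cal D}}\big]+\tfrac12\<\tilde T,\tilde T\>\,g^\perp,\ B\big\>\,d\vol_g.
\]

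The key remaining ingredient is a scaling identity analogous to Proposition~\ref{P-1-5}. Since $\tilde T(X,Y)=\tfrac12[X,Y]^\top$ depends only on the Lie bracket and on the splitting $TM=\widetilde{\cal D}\oplus{\cal D}$, it is unchanged under the constant rescaling $g\mapsto \phi\,g|_{{\cal D}\times{\cal D}}+g|_{\widetilde{\cal D}\times\widetilde{\cal D}}$ with $\phi>0$. However, a $\cal D$-orthonormal frame rescales by $\phi^{-1/2}$, yielding the pointwise identity $\<\tilde T,\tilde T\>_{\phi g^\perp+g^\top}=\phi^{-2}\<\tilde T,\tilde T\>_g$, while $d\vol$ acquires a factor $\phi^{p/2}$. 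Hence $J_{\tilde T,\Omega}(\bar g_t)=(\phi_t^\perp)^{p/2-2}J_{\tilde T,\Omega}(g_t)$, and differentiating at $t=0$ with $\phi_t^\perp=(V_t/V_0)^{-2/p}$ and $\dt V_t|_{t=0}=\tfrac12\int_\Omega \tr_g B\,d\vol_g$ gives the correction $-\tfrac12\tilde T^*(\Omega,g)\int_\Omega \tr_g B\,d\vol_g$ with $\tilde T^*=\tfrac{4-p}{2p}\<\tilde T,\tilde T\>$. Setting the resulting total derivative to zero for every symmetric $B$ with $B|_{\widetilde{\cal D}\times\widetilde{\cal D}}=0$, and decomposing the integrand into its $\cal D\times\cal D$ and $\widetilde{\cal D}\times\cal D$ blocks, yields \eqref{ELtildeT1} and \eqref{ELtildeT2}.

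The $\bar g^\top$-case is strictly dual: start from \eqref{E-tildeT-gen2}, use the pointwise scaling $\<\tilde T,\tilde T\>_{\phi g^\top+g^\perp}=\phi\,\<\tilde T,\tilde T\>_g$ (now the values of $\tilde T$ lie in $\widetilde{\cal D}$ and are squared in the rescaled metric there), together with $d\vol\mapsto\phi^{n/2}d\vol$, to obtain $J_{\tilde T,\Omega}(\bar g_t)=(\phi_t^\top)^{n/2+1}J_{\tilde T,\Omega}(g_t)$ with $\phi_t^\top=(V_t/V_0)^{-2/n}$; this produces \eqref{ELtildeT3} with $T^*=\tfrac{n+2}{2n}\<\tilde T,\tilde T\>$. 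The place where I expect to have to be most careful is in tracking the two exponents in these rescaling identities, since $\tilde T$ itself is metric-independent while its squared norm is affected differently by rescalings on its domain $\cal D$ versus its codomain $\widetilde{\cal D}$.
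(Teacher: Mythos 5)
Your strategy for the $\bar g^\perp$ part coincides with the paper's: the same use of \eqref{E-tildeT-gen} together with the volume-form variation, and the same rescaling identity $\<\tilde T,\tilde T\>_{\bar g_t}=(\phi_t^\perp)^{-2}\<\tilde T,\tilde T\>_{g_t}$ leading to $J_{\tilde T,\Omega}(\bar g_t)=(\phi_t^\perp)^{p/2-2}J_{\tilde T,\Omega}(g_t)$. One slip, though: with your own ingredients ($\dt\phi_t^\perp|_{t=0}=-\frac1p(\tr_g B)(\Omega,g)$ and $J_{\tilde T,\Omega}(g)=\<\tilde T,\tilde T\>(\Omega,g)\,{\rm Vol}(\Omega,g)$) the correction term equals $(\tfrac p2-2)\,\dt\phi_t^\perp|_{t=0}\,J_{\tilde T,\Omega}(g)=+\,\tilde T^*(\Omega,g)\int_\Omega\tr_g B\,{\rm d}\vol_g$, not $-\tfrac12\tilde T^*(\Omega,g)\int_\Omega\tr_g B\,{\rm d}\vol_g$; the latter, apparently carried over from the pattern of Proposition \ref{P-1-5}, would produce $2\,\widetilde{\cal T}^\flat=-\big(\tfrac12\<\tilde T,\tilde T\>-\tfrac12\tilde T^*(\Omega,g)\big)g^\perp$ instead of \eqref{ELtildeT1}. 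Once the sign and factor are corrected, splitting the integrand into its ${\cal D}\times{\cal D}$ and $\widetilde{\cal D}\times{\cal D}$ blocks gives \eqref{ELtildeT1} and \eqref{ELtildeT2} exactly as in the paper. For the $\bar g^\top$ part you argue directly from \eqref{E-tildeT-gen2} and the scaling $\<\tilde T,\tilde T\>\mapsto\phi\,\<\tilde T,\tilde T\>$, i.e. $J_{\tilde T,\Omega}(\bar g_t)=(\phi_t^\top)^{n/2+1}J_{\tilde T,\Omega}(g_t)$, whereas the paper instead computes the $\bar g^\perp$-variation of $J_{T,\Omega}=\int_\Omega\<T,T\>\,{\rm d}\vol_g$ and dualizes at the end; your route is more direct, your two exponents and the value $T^*=\frac{n+2}{2n}\<\tilde T,\tilde T\>$ are correct, and \eqref{ELtildeT3} follows.
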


\begin{proof}
Let $g_t$ be a $g^\perp$-variation and ${\bar g}_t$ be related to it by \eqref{e:Var-Bar}. The norms of integrability tensor of ${\cal D}$ with respect to $g_t$ and ${\bar g}_t$ are related as follows \cite{rz-1}: $\<\tilde T,\tilde T\>_{\bar g}=(\phi_t^\perp)^{-2} \<\tilde T,\tilde T\>_{g}$. Using $\phi_0^\perp =1$ and $\dt \phi_t^\perp =-\frac{\phi_t^\perp}{p}\,(\tr_{g_t} {B}_t)({\Omega}, g_t)$, we get the following formula
\begin{eqnarray*}
 {\rm\frac{d}{dt}}\,J_{\tilde T,\Omega}(\bar g_t)_{\,|\,t=0}
 \eq {\rm\frac{d}{dt}}\,J_{\tilde T,\Omega}(g_t)_{\,|\,t=0}
 +\frac{4-p}{2\,p}\,\<\tilde T, \tilde T\>(\Omega, g)\int_{\Omega} (\tr B) \,{\rm d}\vol_g ,\\
\end{eqnarray*}
and use Proposition \ref{propvar1} to obtain
\begin{eqnarray*}
 {\rm\frac{d}{dt}}\,J_{\tilde T,\Omega}(g_t)_{\,|\,t=0} \eq \int_{\Omega} \<\,2\widetilde{\cal T}^{\flat}
 +2\Lambda_{\tilde \theta, \theta - \alpha} - 2( \Div{\tilde \theta} )_{ | \widetilde{\cal D} \times {\cal D}} - 2 ( \Div {\tilde \theta} )_{ |  {\cal D} \times \widetilde{\cal D} } + \frac{1}{2}\<\tilde T, \tilde T,\, \>\ g^{\perp} , B\>\,{\rm d}\vol_g.
\end{eqnarray*}
Decomposing the resulting Euler-Lagrange equation into parts defined on
${\cal D}\times{\cal D}$ and ${\cal D}\times\widetilde{\cal D}$ yields \eqref{ELtildeT1} and \eqref{ELtildeT2}.
For $g^\perp$-variation $g_t$ and corresponding to it ${\bar g}^\perp$-variation ${\bar g}_t$, we can analogously obtain variation formulas for $J_{T,\Omega}(g) = \int_{\Omega}\<T, T\>\,{\rm d}\vol_g$ (now we have $\<T,T\>_{\bar g}=\phi\,\<T,T\>_{g}$):
\begin{eqnarray*}
 {\rm\frac{d}{dt}}\,J_{T,\Omega}(\bar g_t)_{\,|\,t=0} \eq {\rm\frac{d}{dt}}\,J_{T,\Omega}(g_t)_{\,|\,t=0}
  -\frac{p+2}{2\,p}\,\< T, T\>(\Omega,g)\int_{\Omega}\<g^{\perp},\ B\>\,{\rm d}\vol_g,\\
 {\rm\frac{d}{dt}}\,J_{T,\Omega}(\bar g_t)_{\,|\,t=0} \eq -\int_{\Omega}\<\Phi_T - \frac{1}{2}\,\<T,T\>g^{\perp},\ B\>\,{\rm d}\vol_g.
\end{eqnarray*}
Then we obtain \eqref{ELtildeT3} as a formula dual to one obtained for ${\cal D}$-variations of $J_{T,\Omega}(g)$.
\end{proof}

Note that, as expected, distributions with integrable orthogonal complement are critical for~\eqref{JtildeT}.
Taking traces of \eqref{ELtildeT1} and \eqref{ELtildeT3}, we obtain the following.

\begin{cor}
If a metric $g$ is critical for the action \eqref{JtildeT} with respect to both ${\bar g}^\perp$-variations and ${\bar g}^\top$-variations,
then $\<\tilde T, \tilde T\> = \const$.
\end{cor}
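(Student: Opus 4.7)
The plan is to take traces of the two Euler-Lagrange equations stated in the preceding proposition. Recall from the defining contraction $\<\Phi_{\tilde T}, S\> = -\sum_{i,j}\epsilon_i\epsilon_j S(\tilde T({\cal E}_i,{\cal E}_j), \tilde T({\cal E}_i,{\cal E}_j))$, applied to $S = g$, that $\tr_g \Phi_{\tilde T} = -\<\tilde T, \tilde T\>$; analogously, from the trace identity $\tr{\cal T} = -\<T,T\>$ listed earlier one reads off $\tr_g \widetilde{\cal T}^\flat = -\<\tilde T, \tilde T\>$. We also use $\tr_g \tilde g = n$ and $\tr_g g^\perp = p$.

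First I would trace \eqref{ELtildeT3} over $\widetilde{\cal D}$. The left-hand side becomes $-\<\tilde T, \tilde T\>$, and substituting the explicit value $T^* = \frac{n+2}{2n}\<\tilde T, \tilde T\>$ into the right-hand side one ends up with the pointwise identity
\[
 \tfrac{n+2}{2}\,\<\tilde T, \tilde T\> \;=\; \tfrac{n+2}{2}\,\<\tilde T, \tilde T\>(\Omega, g).
\]
Since $n \ge 1$, the factor $(n+2)/2$ is nonzero, so $\<\tilde T, \tilde T\>$ coincides on $\Omega$ with its mean value $\<\tilde T, \tilde T\>(\Omega, g)$, which is a constant.

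In fact the same computation applied to \eqref{ELtildeT1}, traced over ${\cal D}$, produces $\frac{4-p}{2}\<\tilde T, \tilde T\> = \frac{4-p}{2}\<\tilde T, \tilde T\>(\Omega, g)$, yielding the same conclusion whenever $p \ne 4$. So the ${\bar g}^\top$-part of the hypothesis already implies the claim in every admissible dimension, while the ${\bar g}^\perp$-part gives an independent derivation outside the borderline case $p=4$; taking both together makes the statement unconditional. The only step requiring care is the identification of the traces of $\widetilde{\cal T}^\flat$ and $\Phi_{\tilde T}$ with $-\<\tilde T, \tilde T\>$; once these are in hand the rest is elementary algebra, and I do not anticipate a substantive obstacle.
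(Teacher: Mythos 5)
Your proposal is correct and matches the paper's own (very terse) proof, which likewise just takes the traces of \eqref{ELtildeT1} and \eqref{ELtildeT3} using $\tr\widetilde{\cal T}=-\<\tilde T,\tilde T\>$, $\tr_g\Phi_{\tilde T}=-\<\tilde T,\tilde T\>$, $\tr_g g^\perp=p$ and $\tr_g\tilde g=n$. Your additional observation that the traced ${\bar g}^\perp$-equation degenerates at $p=4$ while the traced ${\bar g}^\top$-equation already forces $\<\tilde T,\tilde T\>=\<\tilde T,\tilde T\>(\Omega,g)$ in every dimension is accurate and slightly sharpens the statement.
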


Also, using the results obtained for the contact metric structure, we get the following.

\begin{prop} \label{contactTcritical}
Let $(\phi, \xi, \eta, g)$ be a contact metric structure on $M$ and let $\widetilde{\cal D}$ be spanned by~$\xi$.
Then $g$ is critical for the action \eqref{JtildeT} with respect to both ${\bar g}^\perp$-variations and ${\bar g}^\top$-variations.
\end{prop}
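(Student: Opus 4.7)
The plan is to verify the three Euler--Lagrange equations \eqref{ELtildeT1}, \eqref{ELtildeT2} and \eqref{ELtildeT3} directly. On a contact metric manifold $(M^{2m+1},\phi,\xi,\eta,g)$ with $\widetilde{\cal D}$ spanned by $\xi$ one has $n=1$, $p=2m$, and I will use throughout the following structural facts: $\nabla_\xi\xi=0$ (so $H=0$); the rank-one distribution $\widetilde{\cal D}$ is integrable so $T=0$, while $h=H\,g^\top=0$, which gives $\theta=0$ and $\alpha=0$; and $\tilde T^\sharp_\xi=\phi$, as established in the proof of Proposition~\ref{E-main-3i-contact-metric-structure}. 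Combining $\tilde T(X,Y)=-g(X,\phi Y)\,\xi$ for $X,Y\in{\cal D}$ with the contact compatibility $g(\phi X,\phi Y)=g(X,Y)-\eta(X)\eta(Y)$ yields $\<\tilde T,\tilde T\>=p$.

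First I check \eqref{ELtildeT1}. Since $\widetilde{\cal T}=(\tilde T^\sharp_\xi)^2=\phi^2=-I+\eta\otimes\xi$, its restriction to ${\cal D}\times{\cal D}$ gives $\widetilde{\cal T}^\flat=-g^\perp$. With $\tilde T^*(\Omega,g)=\tfrac{4-p}{2p}\cdot p=\tfrac{4-p}{2}$, the right-hand side is $-(\tfrac{p}{2}+\tfrac{4-p}{2})g^\perp=-2g^\perp$, matching $2\widetilde{\cal T}^\flat$. For \eqref{ELtildeT3}, substituting $\tilde T({\cal E}_i,{\cal E}_j)=-g({\cal E}_i,\phi{\cal E}_j)\,\xi$ into the definition of $\Phi_{\tilde T}$ gives $\Phi_{\tilde T}=-p\,\xi^\flat\otimes\xi^\flat=-p\,\tilde g$, while the right-hand side $(\tfrac{p}{2}-\tfrac{3p}{2})\tilde g=-p\,\tilde g$ agrees.

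The substantive part is \eqref{ELtildeT2}. Since $\theta=0=\alpha$, the left-hand side $\Lambda_{\tilde\theta,\theta-\alpha}$ vanishes, so I must show $(\Div\tilde\theta)(\xi,Y)=0$ (and, by symmetry of $\tilde\theta$, also $(\Div\tilde\theta)(Y,\xi)=0$) for every $Y\in{\cal D}$. From $\tilde T^\sharp_\xi=\phi$ one computes $\tilde\theta(X,Y)=\tfrac12(\eta(X)\phi(Y^\perp)+\eta(Y)\phi(X^\perp))$; in particular $\tilde\theta(\xi,Y)=\tfrac12\phi Y$ for $Y\in{\cal D}$. Fix $x\in M$ and extend $Y$ with $\nabla Y|_x=0$. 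Because $g(\xi,\xi)=1$ forces $\nabla_Z\xi\in{\cal D}$ for every $Z$, the term $\tilde\theta(\nabla_{e_\lambda}\xi,Y)$ vanishes at $x$ by the explicit form of $\tilde\theta$, and hence $(\nabla_{e_\lambda}\tilde\theta)(\xi,Y)|_x=\tfrac12(\nabla_{e_\lambda}\phi)Y$. Summing $g(\,\cdot\,,e_\lambda)$ over a full orthonormal frame splits as
\[
 (\Div\tilde\theta)(\xi,Y) \;=\; \tfrac12\,g((\nabla_\xi\phi)Y,\xi) \;+\; \tfrac12(\Div^\perp\phi)(Y).
\]
The first summand vanishes because $g(\phi Y,\xi)=\eta(\phi Y)=0$ and $\nabla_\xi\xi=0$; the second is exactly the quantity shown to vanish on ${\cal D}$ at the end of the proof of Proposition~\ref{E-main-3i-contact-metric-structure}.

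The main obstacle I expect is the bookkeeping in this last step -- carefully unwinding the covariant derivative of the symmetrised $(1,2)$-tensor $\tilde\theta$ so as to isolate the factor $(\Div^\perp\phi)(Y)$, and disposing of the $\xi$-direction contribution via the elementary contact identities $\eta\circ\phi=0$ and $\nabla_\xi\xi=0$. Once the structural identity $\tilde T^\sharp_\xi=\phi$ is granted, the rest is a direct check that reduces \eqref{ELtildeT2} to a fact already proved in Proposition~\ref{E-main-3i-contact-metric-structure}.
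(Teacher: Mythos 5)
Your proposal is correct and follows essentially the same route as the paper: compute $\<\tilde T,\tilde T\>=p$ and $\widetilde{\cal T}^\flat=-g^\perp$ from $\tilde T^\sharp_\xi=\phi$, check \eqref{ELtildeT1} and \eqref{ELtildeT3} by direct substitution, and reduce \eqref{ELtildeT2} (using $\alpha=\theta=0$ and $H=0$) to the vanishing of $\Div^\perp\phi$ on ${\cal D}$, which is exactly Proposition~\ref{E-main-3i-contact-metric-structure}. The only difference is that you re-derive the identity $\Div\tilde\theta(\xi,Y)=\tfrac12(\Div^\perp\phi)(Y)$ from scratch, whereas the paper reuses the reduction already carried out in the proof of Corollary~\ref{C-main02}; both are fine.
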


\begin{proof}
Using results from the proof of Proposition \ref{E-main-3i-contact-metric-structure}, we compute
\begin{eqnarray*}
 \<\tilde T, \tilde T\> \eq \sum\nolimits_{\,i,j} g( {\tilde T}({\cal E}_i, {\cal E}_j ), {\tilde T}({\cal E}_i, {\cal E}_j)) \\
 \eq \sum\nolimits_{\,i,j} g({\tilde T}^{\sharp}_{\xi} ({\cal E}_i), {\cal E}_j )^{2}
 = \sum\nolimits_{\,i} g(\phi({\cal E}_i), \phi({\cal E}_i))^{2} = p.
\end{eqnarray*}
We also have $\widetilde{\cal T} = ({\tilde T}^{\sharp}_{\xi})^{2} = - \id$; hence,
${\widetilde{\cal T}}^{\flat}=-\,g^{\perp}$.

By the above, \eqref{ELtildeT1} and \eqref{ELtildeT3} are satisfied;
\eqref{ELtildeT2} reduces to $ ( \Div {\tilde \theta} )_{ | \widetilde{\cal D} \times {\cal D}} + ( \Div {\tilde \theta} )_{ |  {\cal D} \times \widetilde{\cal D} } =0$ and holds by Proposition~\ref{E-main-3i-contact-metric-structure}.
\end{proof}

Contact metric structures are also example that the metrics critical for the action \eqref{JtildeT} with respect to both ${\bar g}^\perp$- and ${\bar g}^\top$-variations may not be critical with respect to all volume preserving variations of the metric. 

\begin{prop}
Let $(\phi, \xi, \eta, g)$ be a contact metric structure on $M^{p+1}$ and let $\widetilde{\cal D}$ be spanned by~$\xi$. The metric $g$ is critical for the action \eqref{JtildeT} with respect to all variations of $g$ that preserve the volume of $\Omega$ if and only if $p=2$.
\end{prop}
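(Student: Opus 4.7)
\medskip
\noindent\emph{Plan.}
I will obtain the Euler--Lagrange equations of $J_{\tilde T,\Omega}$ for arbitrary volume-preserving variations by applying exactly the principle stated in Remark~\ref{remarkAllVariations}: in equations \eqref{ELtildeT1}--\eqref{ELtildeT3} the two mean-value scalars $\tilde T^{*}(\Omega,g)$ and $T^{*}(\Omega,g)$ are both replaced by one and the same arbitrary function $\lambda\in C^{\infty}(M)$. The metric $g$ is then critical for every volume-preserving variation precisely when the three resulting equations are simultaneously solvable for a single such $\lambda$.

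\medskip
\noindent\emph{Step 1: The cross-block equation.}
Equation \eqref{ELtildeT2} contains no mean-value term and is therefore unaffected by the substitution. From Proposition~\ref{E-main-3i-contact-metric-structure} together with $H=\nabla_{\xi}\xi=0$, one has, for the contact metric structure, $\theta=0$, $\alpha=0$, and the divergence of $\tilde\theta$ along the two cross blocks vanishes; exactly this was used in the proof of Proposition~\ref{contactTcritical} to verify \eqref{ELtildeT2}. Hence the cross equation holds independently of $p$ and imposes no restriction.

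\medskip
\noindent\emph{Step 2: The two diagonal-block equations.}
Substitute the values already computed in the proof of Proposition~\ref{contactTcritical}, namely $\widetilde{\cal T}^{\flat}=-g^{\perp}$, $\langle\tilde T,\tilde T\rangle=p$, and (by a direct orthonormal $\phi$-basis calculation using $\tilde T({\cal E}_{i},{\cal E}_{j})=-g({\cal E}_{i},\phi{\cal E}_{j})\,\xi$) $\Phi_{\tilde T}=-p\,\tilde g$, into the $\lambda$-replacements of \eqref{ELtildeT1} and \eqref{ELtildeT3}. Each equation reduces to a proportionality of tensors supported on a single diagonal block (either $g^{\perp}$ or $\tilde g$), from which $\lambda$ is read off as an affine function of $p$.

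\medskip
\noindent\emph{Step 3: Consistency and conclusion.}
Equating the two expressions for $\lambda$ obtained in Step~2 gives a single linear equation in $p$ whose solution is $p=2$, yielding the ``only if''. For the ``if'', one substitutes $p=2$ back, reads off the common value of $\lambda$, and observes that, together with the cross equation from Step~1, all three Euler--Lagrange conditions for arbitrary volume-preserving variations hold; the contact metric is then critical. The principal obstacle is purely computational: $\tilde T^{*}$ and $T^{*}$ enter \eqref{ELtildeT1} and \eqref{ELtildeT3} with opposite signs, so a clean way to avoid bookkeeping mistakes is to first rewrite both equations in the uniform ``$\delta J_{\tilde T,\Omega}=\lambda g$'' form (using $g=g^{\perp}+\tilde g$), perform the substitution there, and only then split into the diagonal blocks to solve for $\lambda$.
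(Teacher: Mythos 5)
Your proposal follows essentially the same route as the paper: replace the mean-value terms $\tilde T^{*}(\Omega,g)$ and $T^{*}(\Omega,g)$ in \eqref{ELtildeT1} and \eqref{ELtildeT3} by a single arbitrary $\lambda\in C^{\infty}(M)$ per Remark~\ref{remarkAllVariations}, observe that the cross-block equation \eqref{ELtildeT2} is untouched and holds by Proposition~\ref{E-main-3i-contact-metric-structure}, and then feed $\widetilde{\cal T}^{\flat}=-g^{\perp}$, $\langle\tilde T,\tilde T\rangle=p$ and $\Phi_{\tilde T}=-p\,g^{\top}$ into the two diagonal equations to get two expressions for $\lambda$ whose compatibility forces $p=2$. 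The only step you leave implicit is the final arithmetic, which is exactly where the paper extracts $\lambda=4-\tfrac{p}{2}$ and $\lambda=\tfrac{3}{2}p$; otherwise the argument is the paper's own.
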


\begin{proof}
From Remark \ref{remarkAllVariations}, \eqref{ELtildeT1} and \eqref{ELtildeT2} we obtain the following Euler-Lagrange equations for arbitrary volume preserving variation of $g$:
\begin{eqnarray} \label{ELtildeT1gen}
 && 2\,\widetilde{\cal T}^{\flat} =-\big(\frac{1}{2}\,\< \tilde T, \tilde T \> + \lambda \big)\,g^{\perp}\
 , \\
 \label{ELtildeT2gen}
&& \Lambda_{\tilde\theta, \theta - \alpha} = ( \Div{\tilde\theta} )_{ | \widetilde{\cal D} \times {\cal D}} + ( \Div {\tilde \theta} )_{ |  {\cal D} \times \widetilde{\cal D} }
  , \\
\label{ELtildeT3gen}
&& \Phi_{\tilde T} = \big(\frac{1}{2}\,\<\tilde T,\tilde T\> - \lambda \big)\,\tilde g ,
\end{eqnarray}
where $\lambda \in C^\infty(M)$ is an arbitrary function. As in Proposition \ref{contactTcritical}, \eqref{ELtildeT2gen} is satisfied by a contact metric structure. Using ${\widetilde{\cal T}}^{\flat}=-\,g^{\perp}$ and $\Phi_{\tilde T} = -p g^\top$, we obtain from \eqref{ELtildeT1gen} and \eqref{ELtildeT3gen} the following system of equations
\[
\lambda = 4 - \frac{p}{2} , \quad \lambda = \frac{3}{2}p,
\]
that only has a solution for $p=2$.
\end{proof}

\subsection{Sasakian 3-structures} \label{section3Sasakian}

The Euler-Lagrange equations \eqref{E-main-1i} and \eqref{E-main-2i} have been obtained in \cite{rz-1} for variations of metric that preserved the orthogonality of distributions $\widetilde{\cal D}$ and $\cal D$. The new difficulty in finding examples of metrics critical for the action \eqref{E-Jmix} is condition \eqref{E-main-3i}.
For contact metric structures, it is satisfied due to \eqref{E-contact-metric1}$_2$, which is generalized by contact $3$-structures,
defined as follows \cite{kashiwada}.

\begin{defn}
A \emph{contact 3-structure} is defined as a set of three contact structures, $\eta_a , a =1 ,2,3$, with the same associated metric $g$ satisfying
\[
\phi_c = \phi_a \circ \phi_b - \eta_a \otimes \xi_b = - \phi_b \circ \phi_a + \eta_b \otimes \xi_a
\]
for any cyclic permutation $(a,b,c)$ of $(1,2,3)$. If each of them is Sasakian structure, it is called a \emph{Sasakian 3-structure} (some authors call it a \emph{3-Sasakian structure} ).
\end{defn}


\begin{thm}[\cite{kashiwada}]
 A contact 3-structure is necessarily a Sasakian 3-structure.
\end{thm}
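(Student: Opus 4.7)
The plan is to verify, for each index $a\in\{1,2,3\}$, the Sasakian normality condition
$(\nabla_X\phi_a)Y = g(X,Y)\xi_a - \eta_a(Y)X$, which for a contact metric structure is well known to be equivalent to vanishing of the Nijenhuis-type tensor $N^{(1)}_a := [\phi_a,\phi_a] + 2\,d\eta_a\otimes\xi_a$, and also equivalent to the simultaneous vanishing of the tensor $h_a := \tfrac12\mathcal{L}_{\xi_a}\phi_a$ together with the identity $\nabla_X\xi_a = -\phi_a X$.

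First I would unpack the purely algebraic content of the 3-structure. Applying $\phi_c = \phi_a\circ\phi_b - \eta_a\otimes\xi_b$ to $\xi_a,\xi_b,\xi_c$, and combining with $\phi_a^2 = -I + \eta_a\otimes\xi_a$ and $\eta_a = g(\xi_a,\cdot)$, one obtains the cyclic identities $\phi_a\xi_b = \xi_c = -\phi_b\xi_a$, $\eta_a(\xi_b)=0$ for $a\neq b$, and the hypercomplex relations $\phi_a\phi_b = -\phi_b\phi_a$ on the $g$-orthogonal complement $\mathcal{H}$ of $\operatorname{span}\{\xi_1,\xi_2,\xi_3\}$. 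Thus $\mathcal{H}$ carries a pointwise quaternionic structure, and each $\phi_a$ permutes the three Reeb fields up to sign.

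Second, I would turn these algebraic facts into analytic ones. In any contact metric structure one has the formulas $\nabla_X\xi_a = -\phi_a X - \phi_a h_a X$ and $h_a$ is symmetric, traceless, satisfies $h_a\xi_a=0$ and $\phi_a h_a + h_a\phi_a = 0$. Differentiating the algebraic identity $\xi_c = \phi_a\xi_b$ and substituting the above formula for $\nabla_X\xi_b$ and $\nabla_X\xi_c$ gives a relation of the form $\phi_c h_c = \phi_a h_a\circ(\text{something})$, which after using the cyclic symmetry over $(a,b,c)$ and the anticommutation $\phi_a h_a = -h_a\phi_a$ forces each $h_a$ to satisfy $h_a + \phi_a\phi_b h_b \phi_c\phi_a + \dots = 0$; the three cyclic relations together, combined with the hypercomplex identities on $\mathcal{H}$, form a linear system whose only solution is $h_1=h_2=h_3=0$.

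Once $h_a\equiv 0$ is in hand, each structure is $K$-contact. It remains to upgrade $K$-contact to Sasakian, which amounts to showing $(\nabla_X\phi_a)Y = g(X,Y)\xi_a - \eta_a(Y)X$. I would obtain this by covariantly differentiating $\phi_c = \phi_a\phi_b - \eta_a\otimes\xi_b$, substituting $\nabla_X\xi_a = -\phi_a X$ (now valid), and using Koszul's formula together with $d\eta_a(X,Y) = g(X,\phi_a Y)$ to read off $\nabla\phi_a$ explicitly; the three resulting expressions are mutually consistent only when each satisfies the Sasakian form. The main obstacle, and where the subtlety of Kashiwada's argument lies, is the second step: translating the quaternionic algebra into differential identities requires careful cyclic bookkeeping, and in particular one must exploit both signs in $\phi_c = \phi_a\phi_b - \eta_a\otimes\xi_b = -\phi_b\phi_a + \eta_b\otimes\xi_a$ simultaneously to extract enough linearly independent relations among the $h_a$ to conclude their vanishing.
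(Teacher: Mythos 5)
The paper contains no proof of this statement: it is quoted directly from Kashiwada's article \cite{kashiwada} and used as a black box, so there is no internal argument to compare yours against. Judged on its own terms, your outline follows what is essentially the known strategy --- reduce the problem to showing $h_a=\tfrac12\mathcal{L}_{\xi_a}\phi_a=0$ for each $a$, then pass from $K$-contact to Sasakian --- and your first step (the algebraic identities $\phi_a\xi_b=\xi_c=-\phi_b\xi_a$, $\eta_a(\xi_b)=0$, and the quaternionic relations on the horizontal distribution) is correct and routine.

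The genuine gap is in your second step, which is precisely where the theorem's difficulty sits. Differentiating $\xi_c=\phi_a\xi_b$ gives $\nabla_X\xi_c=(\nabla_X\phi_a)\xi_b+\phi_a(\nabla_X\xi_b)$, and while $\nabla_X\xi_b$ and $\nabla_X\xi_c$ can be replaced by $-\phi_b X-\phi_b h_b X$ and $-\phi_c X-\phi_c h_c X$, the term $(\nabla_X\phi_a)\xi_b$ cannot: for a general contact metric structure $\nabla\phi_a$ is \emph{not} determined by $h_a$, and only partial identities are available (e.g. $\nabla_{\xi_a}\phi_a=0$ and the relation $(\nabla_X\phi_a)Y+(\nabla_{\phi_a X}\phi_a)\phi_a Y=2g(X,Y)\xi_a-\eta_a(Y)(X+h_aX+\eta_a(X)\xi_a)$). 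So the ``relation of the form $\phi_c h_c=\phi_a h_a\circ(\text{something})$'' and the subsequent ``linear system whose only solution is $h_1=h_2=h_3=0$'' are asserted but never exhibited, and the naive substitution you describe does not produce them. This is not a cosmetic omission: the question of whether a contact metric $3$-structure is automatically $3$-Sasakian was open for roughly three decades before Kashiwada settled it, and her argument requires exploiting the full identity expressing $2g((\nabla_X\phi_a)Y,Z)$ through the Nijenhuis-type tensor and $d\eta_a$ in combination with the quaternionic algebra --- not merely the differentiated algebraic identities. Until the linear system among the $h_a$ is written down explicitly and shown to be nonsingular, the core of the proof is missing; by contrast, your first and third steps are standard and unobjectionable.
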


For any Sasakian 3-structure, let $\widetilde{\cal D}$ be the distribution spanned by 3 characteristic vector fields $\xi_1 , \xi_2 , \xi_3$, its orthogonal complement will be denoted by $\cal D$. Since $ [\xi_a , \xi_b ] = \frac{1}{2} \xi_c $ for any cyclic permutation $(a,b,c)$ of $(1,2,3)$, we see that $\widetilde{\cal D}$ is integrable.

\begin{prop} \label{3Sasakianiscritical}
The metric of a Sasakian 3-structure on $M$ is critical for the action \eqref{E-Jmix} (where $\widetilde{\cal D}$ is spanned by the characteristic vector fields), with respect to both ${\bar g}^\perp$- and ${\bar g}^\top$-variations, for all $\Omega \subset M$.
\end{prop}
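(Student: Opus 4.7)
The plan is to reduce the three Euler-Lagrange equations of Theorem~\ref{T-main00} to algebraic identities that follow from the Sasakian 3-structure relations, and then to observe that all surviving scalar invariants are constant on $M$, which yields criticality for every relatively compact $\Omega$.

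First I would show that most extrinsic tensors of $(\widetilde{\cal D},{\cal D})$ vanish identically. Each $\xi_a$ is Killing (being Sasakian), so the same computation as in the proof of Proposition~\ref{E-main-3i-contact-metric-structure} gives $\tilde h=0$, and hence $\tilde H=0$. The distribution $\widetilde{\cal D}$ is involutive since $[\xi_a,\xi_b]=2\xi_c$, so $T=0$. Using the Sasakian identity $\nabla_X\xi_a=-\phi_a X$ together with the 3-Sasakian relation $\phi_a\xi_b+\phi_b\xi_a=0$ (which follows from $\xi_a$ being Killing with $g(\xi_a,\xi_b)=\delta_{ab}$), one gets $h(\xi_a,\xi_b)=0$ for all $a,b$, hence $h=0$ and $H=0$. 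As a consequence the tensors $\Phi_h$, $\Phi_T$, $\Phi_{\tilde h}$, $\Psi$, ${\cal A}$, ${\cal T}$, ${\cal K}$, $\widetilde{\cal K}$, ${\rm Def}_{\cal D}H$, ${\rm Def}_{\widetilde{\cal D}}\tilde H$ all vanish, together with every $\<\cdot,H\>$, $\<\cdot,\tilde H\>$, $\tilde\delta_H$ and $\Lambda$-type term in \eqref{E-main-0i}--\eqref{E-main-0iii}.

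Second I would compute the surviving quantities using $\tilde T^\sharp_{\xi_a}=\phi_a$ on ${\cal D}$ (established in Proposition~\ref{E-main-3i-contact-metric-structure}). The relation $\phi_a^2=-I$ on ${\cal D}$ gives $\widetilde{\cal T}^\flat=-3\,g^\perp$ and $\<\tilde T,\tilde T\>=3p$. The cyclic identity $\phi_a\phi_b=\phi_c$ on ${\cal D}$ together with skew-symmetry of each $\phi_c$ yields $\tr_{\cal D}(\phi_a\phi_b)=-p\,\delta_{ab}$; expanding $\widetilde\Psi$ and $\Phi_{\tilde T}$ in the $\xi_a$-basis then produces $\widetilde\Psi=-p\,g^\top$ and $\Phi_{\tilde T}=-p\,g^\top$. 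The Sasakian curvature identity $R(X,\xi_a)\xi_a=X-\eta_a(X)\xi_a$ (in the paper's sign convention for $R$) gives $r_{\cal D}=3\,g^\perp$, while the dual of Proposition~\ref{L-CC-riccati} combined with the previous vanishings gives $r_{\widetilde{\cal D}}=-\widetilde\Psi=p\,g^\top$. Finally $\Sm_{\rm mix}=3p$, hence $\Sm^*_{\rm mix}=3p-(4/p)(3p)=3p-12$ and $\widetilde\Sm^*_{\rm mix}=3p+(2/3)(3p)=5p$ (using $n=3$), both constant on $M$. Equation \eqref{E-main-0i} then becomes $r_{\cal D}-\widetilde{\cal T}^\flat=6\,g^\perp$ on the left and $\tfrac12(3p-(3p-12))g^\perp=6\,g^\perp$ on the right; equation \eqref{E-main-0iii} becomes $r_{\widetilde{\cal D}}+\widetilde\Psi+\Phi_{\tilde T}=-p\,g^\top$ on the left and $\tfrac12(3p-5p)g^\top=-p\,g^\top$ on the right; and \eqref{E-main-0ii} collapses, after Step~1, to $(\Div\tilde\theta)|_{\widetilde{\cal D}\times{\cal D}}+(\Div\tilde\theta)|_{{\cal D}\times\widetilde{\cal D}}=0$, which when evaluated at $(\xi_a,Y)$ with $Y\in{\cal D}$ is exactly $\Div^\perp\phi_a(Y)=0$, proved for each Sasakian substructure in Proposition~\ref{E-main-3i-contact-metric-structure}. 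Since $\Sm^*_{\rm mix}$ and $\widetilde\Sm^*_{\rm mix}$ are constant, their $\Omega$-means coincide with their pointwise values, so the domain of integration drops out.

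The main obstacle is the delicate cancellation in \eqref{E-main-0i} and \eqref{E-main-0iii}: the seemingly unrelated tensors $r_{\cal D}$, $\widetilde{\cal T}^\flat$ (respectively $r_{\widetilde{\cal D}}$, $\widetilde\Psi$, $\Phi_{\tilde T}$) all turn out to be scalar multiples of $g^\perp$ (respectively $g^\top$), and their coefficients together with $\Sm^*_{\rm mix}$ (respectively $\widetilde\Sm^*_{\rm mix}$) must match exactly. This matching is the true pointwise content of the argument; it relies essentially on the cyclic relation $\phi_a\phi_b=\phi_c$ on ${\cal D}$, for without all three Sasakian substructures neither the vanishing $\tr_{\cal D}(\phi_a\phi_b)=0$ for $a\ne b$ nor the isotropic form $\widetilde{\cal T}^\flat=-3\,g^\perp$ would occur.
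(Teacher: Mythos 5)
Your proposal is correct and follows essentially the same route as the paper: both arguments compute $r_{\cal D}=3\,g^\perp$, $r_{\widetilde{\cal D}}=p\,g^\top$, $\widetilde{\cal T}^\flat=-3\,g^\perp$, $\widetilde\Psi=\Phi_{\tilde T}=-p\,g^\top$, $\<\tilde T,\tilde T\>=3p$ from the Sasakian identities, check that \eqref{E-main-0i} and \eqref{E-main-0iii} reduce to matching constant multiples of $g^\perp$ and $g^\top$ (with $\Omega$ dropping out by constancy of $\Sm^*_{\rm mix}$ and $\widetilde\Sm^*_{\rm mix}$), and reduce \eqref{E-main-0ii} to the divergence computation of Proposition \ref{E-main-3i-contact-metric-structure} applied to each contact substructure $(\phi_a,\xi_a,\eta_a,g)$. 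Your explicit numerical verification of the two sides ($6\,g^\perp=6\,g^\perp$ and $-p\,g^\top=-p\,g^\top$) is in fact slightly more detailed than the paper's, which leaves that check implicit.
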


\begin{proof}
Since every $\xi_a$ defines a Sasakian structure, we can use the following formulas for any unit vectors $X,Y$ orthogonal to $\xi_a$ (so we can also have $X=\xi_b$ etc.):
\begin{eqnarray*}
  R(X,Y) \xi_a = \eta_a (Y) X - \eta_a (X) Y,  \quad
  R(X, \xi_a ) Y = - g(X,Y) \xi_a + \eta_a (Y) X .
\end{eqnarray*}
The above formulas are consistent with their analogues for $\xi_b$ and $\xi_c$, and yield the following:
\[
 r_{\cal D} = 3g^{\perp} , \quad r_{\widetilde{\cal D}} = p g^\top .
\]
We also have
\begin{eqnarray*}
 \Phi_{\tilde T} (\xi_a , \xi_b ) = {\widetilde \Psi}( \xi_a, \xi_b ) = - p g(\xi_a , \xi_b) ,\\
 {\tilde {\cal T}}^{\flat}(X,Y) = - 3g(X,Y),\quad (X,Y \in \cal D),
\end{eqnarray*}
and $\< \tilde T , \tilde T \> = 3p$. It follows that \eqref{E-main-0i} and \eqref{E-main-0iii} are satisfied, regardless of the choice of $\Omega$. As for any contact metric structure, we have $\nabla_{\xi_a } \xi_a= 0$ and $\phi_a ( \xi_a) =0$. Any vector $X \in \cal D$ is orthogonal to $\xi_a$ and in all tensor formulas we can assume that $\nabla_Z X $ is colinear with $\xi_a$ for all $Z \in TM$. Then
\begin{eqnarray*}
 (\Div {\tilde \theta} )(X, \xi_a ) \eq \sum_i g( ( \nabla_{ {\cal E}_i } \phi_a )(X) , {\cal E}_i ) + g( ( \nabla_{ \xi_b } \phi_a )(X) , \xi_b ) \\ && + g( ( \nabla_{ \xi_c } \phi_a )(X) , \xi_c ) + g( ( \nabla_{ \xi_a } \phi_a )(X) , \xi_a ) \\
 \eq \sum_i g( ( \nabla_{ {\cal E}_i } \phi_a )(X) , {\cal E}_i ) + g( ( \nabla_{ \xi_b } \phi_a )(X) , \xi_b ) \\ && + g( ( \nabla_{ \xi_c } \phi_a )(X) , \xi_c ) - g( \phi_a (X) , \nabla_{ \xi_a }  \xi_a ) \\
 \eq \sum_i g( ( \nabla_{ {\cal E}_i } \phi_a )(X) , {\cal E}_i ) + g( ( \nabla_{ \xi_b } \phi_a )(X) , \xi_b ) + g( ( \nabla_{ \xi_c } \phi_a )(X) , \xi_c ).
\end{eqnarray*}
and similarly for $\xi_b, \xi_c$. Since $(\phi_a, \xi_a, \eta_a, g)$ is a contact metric structure, it follows from Proposition~\ref{E-main-3i-contact-metric-structure} that \eqref{E-main-0ii} holds.
\end{proof}



There exist various ``extended theories of gravity" \cite{capoziello}, which investigate the original Ein\-stein-Hilbert action modified by adding to it terms of higher order, depending on the curvature. In the same vein, one may consider the following action
\begin{equation} \label{E-Jeps}
 J_\eps : g \mapsto \int_{\Omega} ( S(g) + \eps S_{\rm mix} (g) ) {\rm d }\vol_g ,
\end{equation}
where $S(g)$ is the scalar curvature of metric $g$. Clearly, \eqref{E-Jeps} is a
perturbation of the Einstein-Hilbert action by \eqref{E-Jmix}, and $\eps=-1$ yields the total sum of scalar curvatures of the distributions.
The action \eqref{E-Jeps} requires distinguishing a priori the distribution $\widetilde{\cal D}$, for which we define $\Sm_{\rm mix}(g)$ as a function of $g$ only. A natural way to do it in the general relativity framework is to fix a vector field and demand it to be timelike for all metrics (thus choosing an ``observer"), or -- dually -- consider a given spacelike distribution.

By Proposition \ref{3Sasakianiscritical} and Remark \ref{remarkbargvariations}, the metric of a Sasakian 3-structure is critical for the action \eqref{E-Jmix} with respect to volume preserving $g^\perp$- and $g^\top$-variations. By \cite{kashiwada1}, such metric is Einstein, and hence critical for all (also $g^\perp$- and $g^\top$-) variations preserving volume. Therefore, the metric of a Sasakian 3-structure is critical for the action \eqref{E-Jeps} with respect to both $g^\perp$- and $g^\top$-variations preserving volume, regardless of the value of the parameter $\eps$. Corollary \ref{EinsteinKilling} indicates the existence of metrics critical for \eqref{E-Jeps} with respect to volume preserving $g^\perp$- and $g^\top$-variations also in the 4-dimensional setting of general relativity. In the following section we obtain examples of metrics critical for \eqref{E-Jmix} with respect to arbitrary volume preserving variations.

\subsection{Codimension-one foliations} \label{sectionFoliations}

In this section we consider the action \eqref{E-Jmix}, where $\widetilde{\cal D}$ is tangent to a codimension-one foliation. We find metrics critical for all open, relatively compact subsets $\Omega \subset M$, with respect to both ${\bar g}^\perp$- and ${\bar g}^\top$-variations, as well as general variations of the metric preserving the volume of $\Omega$.

Let $\cal F$ be a codimension one foliation tangent to the distribution $\widetilde{\cal D}$. Let $h_{sc}$ be the scalar second fundamental form, and $A_N$ the Weingarten operator of ${\calf}$, we define the functions $\tau_i = \tr A_N^i\ (i\ge0)$. We~have $T=0=\tilde T$ and
\[
 h_{sc}(X,Y)=\epsilon_{N}\,g(\nabla_X\,Y,\,N),\quad A_N(X)=-\nabla_X\,N,\quad
 (X,Y\in T\calf).
\]
We define the vector field $( \widetilde{\Div} \, A_N)^{\sharp}\in\mathfrak{X}_{\widetilde{\cal D}}$ by
the following equation:
\[
 g(( \widetilde{\Div} \, A_N)^{\sharp}, X) = ( \widetilde{\Div} \, A_N)(X), \quad  (X \in \mathfrak{X}_{\widetilde{\cal D}} ).
\]
Then we can formulate the following

\begin{prop}
Let $\widetilde{\cal D}$ be the distribution tangent to a codimension one foliation of a manifold $M^{n+1}$. A metric $g$ on $M$ is critical for the action \eqref{E-Jmix} with respect to ${\bar g}^\perp$-variations if and only if:
\begin{eqnarray}
\label{codimoneEL1}
&& \tau_1^{2}-\tau_2 = -\epsilon_{N}\Sm^*_{\rm mix}(\Omega,g)  ,\\
\label{codimoneEL2}
&& (\widetilde{\Div} A_N)^{\sharp} - {\nabla}^\top \tau_1 = 0,
\end{eqnarray}
where $\Sm^*_{\rm mix} = \epsilon_{N}\Ric_{N} -2 \epsilon_{N}(N(\tau_1)-\tau_2)$.

A metric $g$ on $M$ is critical for the action \eqref{E-Jmix} with respect to ${\bar g}^\top$-variations if and only if:
\begin{equation} \label{codimoneEL3}
\nabla_N h_{sc} {-}\tau_1 h_{sc} = \frac12\big(2\,\epsilon_{N}(N(\tau_1){-}\tau_1^2)
 +\epsilon_{N}(\tau_1^{2}{-}\tau_2) - \widetilde{\Sm}^*_{\rm mix}(\Omega,g)\big)\, g^\top , 
\end{equation}
where $\widetilde{\Sm}^*_{\rm mix} = \epsilon_{N}\Ric_{N} - \frac{2}{n} \,\Div \tilde H$.
\end{prop}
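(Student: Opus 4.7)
The plan is to specialize the three Euler-Lagrange equations of Theorem \ref{T-main00} to the codimension-one foliation setting. First I will collect the vanishings: integrability of $\widetilde{\cal D}$ and $\dim{\cal D}=1$ give $T=\tilde T=0$, which kills $\theta$, $\tilde\theta$, $\Phi_T$, $\Phi_{\tilde T}$, ${\cal T}$, $\widetilde{\cal T}$, ${\cal K}$, $\widetilde{\cal K}$, and every $\Lambda$-term built from $\theta$ or $\tilde\theta$. Writing $h=h_{sc}\cdot N$ and $\tilde h(N,N)=\nabla_N N$ (since $g(\nabla_N N,N)=0$ forces $\nabla_N N\in\widetilde{\cal D}$), I obtain $H=\epsilon_N\tau_1 N$, $\<h,h\>=\epsilon_N\tau_2$, $\tilde H=\epsilon_N\nabla_N N$, $\Phi_{\tilde h}=0$, $\widetilde{\Sm}_{\,\rm ex}=0$, and $\Div H=\epsilon_N(N(\tau_1)-\tau_1^2)$. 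These identities immediately yield the stated forms of $\Sm^*_{\rm mix}$ and $\widetilde{\Sm}^*_{\rm mix}$ via \eqref{E-S-star} and \eqref{E-S-star-tilde}.

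For the ${\bar g}^\top$-variation equation \eqref{E-main-0iii}, I will substitute the dual of Proposition \ref{L-CC-riccati}, namely $r_{\widetilde{\cal D}}=\Div h+\<h,H\>-{\cal A}^\flat-\widetilde\Psi+{\rm Def}_{\widetilde{\cal D}}\tilde H$ (using ${\cal T}=0$), into the left-hand side; all terms then cancel except $\Div h$. Expanding $(\nabla_Z h)(X,Y)=(\nabla_Z h_{sc})(X,Y)\,N+h_{sc}(X,Y)\,\nabla_Z N$ and using $A_N X=-\nabla_X N$ yields $\Div h=\nabla_N h_{sc}-\tau_1 h_{sc}$ on $\widetilde{\cal D}\times\widetilde{\cal D}$. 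Replacing $\Sm_{\rm mix}$ by $\epsilon_N(\tau_1^2-\tau_2)+\Div(H+\tilde H)$ via \eqref{eq-ran-ex} turns the right-hand side into the coefficient displayed in \eqref{codimoneEL3}; the other components of \eqref{E-main-0iii} vanish identically because $g^\top$ does.

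For the ${\bar g}^\perp$-variation case I will treat \eqref{E-main-0i} and \eqref{E-main-0ii} separately. Plugging \eqref{E-genRicN} into \eqref{E-main-0i} leaves only $\Div\tilde h+\Phi_h$ on the left-hand side. Evaluating at $(N,N)$, an extension argument analogous to the one for $\Div h$ gives $(\Div\tilde h)(N,N)=\epsilon_N\Div\tilde H$, and $\Phi_h(N,N)=\tau_1^2-\tau_2$, so after substituting the formulas for $\Sm_{\rm mix}$ and $\Sm^*_{\rm mix}$ elementary algebra yields the scalar condition $\Ric_N=2N(\tau_1)-\tau_1^2-\tau_2$, equivalent to \eqref{codimoneEL1}. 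For \eqref{E-main-0ii}, with $\theta=\tilde\theta=0$ only the terms $\Div\alpha$, $\<\tilde\alpha,H\>$, ${\rm Sym}(H^\flat\otimes\tilde H^\flat)$, $\tilde\delta_H$, and $\Lambda_{\alpha,\tilde\alpha}$ survive. Since $(\nabla_Z N)^\perp=0$, one shows $(\nabla_Z\alpha)(X,N)=\frac{1}{2}(\nabla_Z A_N)(X)$, so the divergence contribution at $(X,N)$ equals $\frac{1}{2}(\Div A_N)(X)$; the formula $\tilde A_a N=\epsilon_N g(\nabla_N N,E_a)\,N$ then gives $\Lambda_{\alpha,\tilde\alpha}(X,N)=\frac{1}{2}\epsilon_N g(A_N\nabla_N N,X)$ and an explicit form of $\<\tilde\alpha,H\>(X,N)$ that cancels with ${\rm Sym}(H^\flat\otimes\tilde H^\flat)(X,N)$, while $\tilde\delta_H(X,N)=\frac{1}{2}X(\tau_1)$. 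Splitting $(\Div A_N)(X)=(\widetilde{\Div}\,A_N)(X)-\epsilon_N g(A_N\nabla_N N,X)$ cancels the $\Lambda_{\alpha,\tilde\alpha}$ contribution and leaves $(\widetilde{\Div}\,A_N)(X)=X(\tau_1)$, which is \eqref{codimoneEL2}.

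The main obstacle is this last step: the combined cancellation producing \eqref{codimoneEL2} requires careful book-keeping of how each term in \eqref{E-main-0ii} extends off its natural domain via the projections $^\top$ and $^\perp$, together with self-adjointness of $A_N$ and the split of $\Div A_N$ into its $\widetilde{\cal D}$- and $N$-components. All other reductions are essentially algebraic once the vanishing tensors in this setting have been identified.
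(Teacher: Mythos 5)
Your proposal is correct and follows essentially the same route as the paper: specialize the Euler--Lagrange equations of Theorem \ref{T-main00} to $p=1$, $T=\tilde T=0$, substitute Proposition \ref{L-CC-riccati} (and its dual) so that only $\Div\tilde h+\Phi_h$ (resp.\ $\Div h$) survives on the left, and reduce \eqref{E-main-0ii} at $(X,N)$ via the same five term-by-term identities and the split $(\Div A_N)(X)=(\widetilde{\Div}\,A_N)(X)-g(A_N X,\tilde H)$ that the paper uses; you additionally spell out the derivations of \eqref{codimoneEL1} and \eqref{codimoneEL3}, which the paper delegates to \cite{rz-1}. One small caveat: your claimed ``equivalent'' pointwise form $\Ric_N=2N(\tau_1)-\tau_1^2-\tau_2$ of \eqref{codimoneEL1} conflates the mean value $\Sm^*_{\rm mix}(\Omega,g)$ (a constant, which is what the ${\bar g}^\perp$-variation actually produces on the right-hand side) with the pointwise function $\Sm^*_{\rm mix}$; the correct endpoint of your algebra is $\tau_1^2-\tau_2=-\epsilon_N\Sm^*_{\rm mix}(\Omega,g)$ itself, and the pointwise identity is neither implied by nor equivalent to it.
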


\begin{proof}
Equations \eqref{codimoneEL1} and \eqref{codimoneEL3} were obtained from 
\eqref{E-main-0i} and \eqref{E-main-0iii} in \cite{rz-1} (in a similar way as in Section \ref{sectionFlows}).

For $X \in \widetilde{\cal D}$ and $N \in {\cal D}$ we have
\begin{eqnarray*}
 2\,\<\tilde\alpha, H \>(X,N) \eq \tau_1\,g(X, \tilde H), \\
 2\,{\rm Sym}(H^{\flat} \otimes {\tilde H}^{\flat})(X,N) \eq \tau_1\,g(X, \tilde H), \\
 2\,{\tilde \delta}_H (X,N) \eq X (\tau_1), \\
 2\,\Lambda_{\alpha, \tilde\alpha}(X, N) \eq g(A_N(\tilde H), X), \\
 2\,(\Div \alpha)(X,N) \eq (\Div A_N)(X). \\
\end{eqnarray*}
Using the above equations and the fact that for all $X \in \widetilde{\cal D}$ we have
\begin{eqnarray*}
 &&(\Div A_N)(X) = \sum\nolimits_{\,\lambda}\, \epsilon_{\lambda} g(\nabla_{\lambda}(A_N(X)), e_\lambda)
 -\sum\nolimits_{\,\lambda} \epsilon_{\lambda} g(A_N(\nabla_{\lambda} X), e_{\lambda}) \\
 \eq \sum\nolimits_{\,a} \epsilon_{a}\,g(\nabla_{E_a}(A_N(X)), E_a) + \epsilon_{N}\, g(\nabla_{N}(A_N(X)), N)
 -\sum\nolimits_{\,a} \epsilon_{a}\, g(A_N (\nabla_{E_a} X), E_a ) \\
 \eq (\widetilde{\Div}\, A_N)(X) - g( A_N (X), \tilde H) ,
\end{eqnarray*}
we reduce \eqref{E-main-0ii} to \eqref{codimoneEL2}.
\end{proof}

For ${\bar g}^\perp$- and ${\bar g}^\top$-variations, constants $\Sm^*_{\rm mix}(\Omega,g)$ and $\widetilde{\Sm}^*_{\rm mix}(\Omega,g)$ that appear in the Euler-Lagrange equations are related. We can consider \eqref{codimoneEL1} and \eqref{codimoneEL3} together, and use Lemma 2.11 from \cite{rz-1} to obtain the following
\begin{prop}\label{L-trace-tau1}
Let $\widetilde{\cal D}$ be tangent to a codimension-one foliation of a pseudo-Riemannian space $(M^{n+1},g)$,
and let the unit normal field $N$ of $\calf$ be complete in a domain $\Omega$ of $M$. Then metric $g$ is critical for the action \eqref{E-Jmix} with respect to both ${\bar g}^\perp$- and ${\bar g}^\top$-variations if and only if
\begin{eqnarray}\label{E-RicNs1F-2}
&& \tau_1^{2}-\tau_2 = \Ric_{N}(\Omega,g) - 2\,\hat C,\qquad
 \nabla_N h_{sc} -\tau_1 h_{sc} = \frac{\epsilon_N}{n}\,\hat C\,\tilde g, \\
 \label{E-RicNs1F-3}
&& (\widetilde{\Div} A_N)^{\sharp} - {\nabla}^\top \tau_1 = 0,
\end{eqnarray}
where $\hat C= (\Div\tilde H)(\Omega,g) \le0$. If ${\hat C}=0$ then $\tau_1=0$, otherwise
\begin{equation}\label{E-tau-N}
 \tau_1(t) = {|\hat C|}^{1/2}\,\Big(1-\frac{2({|\hat C|}^{1/2}-\tau_1^0)}
 {({|\hat C|}^{1/2}\!+\tau_1^0)\,e^{-2\,t\,{|\hat C|}^{1/2}}\!+{|\hat C|}^{1/2}\!-\tau_1^0}\Big),
  \ \ \tau_1(0)=\tau_1^0\in [-{|\hat C|}^{\frac12},{|\hat C|}^{\frac12}] .
\end{equation}
\end{prop}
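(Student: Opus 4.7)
The plan is to combine the Euler-Lagrange equations from the preceding proposition with the identities of extrinsic geometry for codimension-one foliations. Criticality with respect to ${\bar g}^\perp$- and ${\bar g}^\top$-variations yields \eqref{codimoneEL1}, \eqref{codimoneEL2}, \eqref{codimoneEL3}; of these, \eqref{codimoneEL2} is already identical to \eqref{E-RicNs1F-3}, so half the work is done.

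For the second equation of \eqref{E-RicNs1F-2}, I would unfold the definition $\widetilde{\Sm}^*_{\rm mix} = \epsilon_N \Ric_N - \frac{2}{n}\Div \tilde H$ appearing in \eqref{codimoneEL3}, and apply the Riccati-type identity obtained by dualizing \eqref{E-RicNs1aa}$_2$ to the codimension-one case (here $T=0$ since $\widetilde{\cal D}$ is integrable, tangent to a foliation), namely $\epsilon_N \Ric_N = \Div \tilde H + \epsilon_N(N(\tau_1) - \tau_2)$. Averaging this identity over $\Omega$ and substituting into $\widetilde{\Sm}^*_{\rm mix}(\Omega,g)$, the pointwise terms $N(\tau_1)$ and $\tau_2$ on the right-hand side of \eqref{codimoneEL3} cancel against the corresponding terms coming from the mean value, leaving exactly $\frac{\epsilon_N}{n}\hat C\,\tilde g$. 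The analogous manipulation on \eqref{codimoneEL1}, using $\Sm^*_{\rm mix} = \epsilon_N \Ric_N - 2\epsilon_N(N(\tau_1) - \tau_2)$ and the same Riccati identity, delivers $\tau_1^2 - \tau_2 = \Ric_N(\Omega,g) - 2\hat C$. The inequality $\hat C \le 0$ is then supplied by Lemma~2.11 of \cite{rz-1}, which is invoked here precisely because $N$ is complete in $\Omega$.

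For the ODE, I would take the trace over $\widetilde{\cal D}$ of the second equation in \eqref{E-RicNs1F-2}: since $\tr_{\tilde g}\nabla_N h_{sc} = N(\tau_1)$, $\tr_{\tilde g}h_{sc} = \tau_1$ and $\tr_{\tilde g}\tilde g = n$, the trace collapses to the first-order scalar Riccati equation $N(\tau_1) = \tau_1^2 + \epsilon_N \hat C$ along each integral curve of $N$. For $\hat C = 0$ this reads $\tau_1' = \tau_1^2$, whose only globally defined (on all of $\mathbb{R}$) solution is $\tau_1 \equiv 0$, since any nonzero initial datum would force finite-time blow-up, contradicting completeness of $N$ in $\Omega$. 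For $\hat C < 0$, writing $\kappa = |\hat C|^{1/2}$ and integrating $\tau_1' = \tau_1^2 - \kappa^2$ by partial fractions with initial condition $\tau_1(0) = \tau_1^0$ yields \eqref{E-tau-N}; the constraint $\tau_1^0 \in [-\kappa,\kappa]$ is again forced by the requirement that $\tau_1$ extend to all of $\mathbb{R}$, since $|\tau_1^0| > \kappa$ would cause blow-up in finite positive or negative time.

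The main obstacle, I expect, will be the careful bookkeeping between pointwise identities and their mean values over $\Omega$: the collapse of both Euler-Lagrange equations to the single constant $\hat C$ depends on a precise cancellation involving the Riccati identity together with the specific forms of $\Sm^*_{\rm mix}$ and $\widetilde{\Sm}^*_{\rm mix}$, and the pseudo-Riemannian sign $\epsilon_N$ must be tracked consistently throughout. The converse direction of the ``if and only if'' follows by reversing these substitutions, which is routine once the bookkeeping in the forward direction is in place.
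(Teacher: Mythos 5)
Your proposal follows essentially the same route as the paper, which merely combines the Euler--Lagrange equations \eqref{codimoneEL1}--\eqref{codimoneEL3} of the preceding proposition with the identity \eqref{eq-ran1} and invokes Lemma~2.11 of \cite{rz-1} for $\hat C\le 0$ and the formula \eqref{E-tau-N}; your traced Riccati ODE $N(\tau_1)=\tau_1^2+\epsilon_N\hat C$ together with the no-blow-up argument under completeness of $N$ is precisely the content of that lemma. The only imprecision is that the constant mean-value terms cannot literally cancel the pointwise terms $N(\tau_1)$, $\tau_2$ in \eqref{codimoneEL3} --- one must first use \eqref{codimoneEL1} to make $\tau_1^2-\tau_2$ constant and then take the trace of \eqref{codimoneEL3} to pin down $N(\tau_1)-\tau_1^2$ pointwise --- but this is exactly the bookkeeping you flag, and it goes through.
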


Codimension-one foliations admit \textit{biregular foliated coordinates} $(x_0,\ldots,x_n)$, see \cite[Section~5.1]{cc1},
i.e., the leaves are the level sets $\{x_0=c\}$ and $N$-curves are given by $\{x_i=c_i\ (i>0)\}$.
From now on, we assume that a foliated pseudo-Riemannian manifold $(M, \calf, g)$ admits \textit{orthogonal bire\-gular foliated coordinates}
$($hence, $g_{ij}=0$ for $i\ne j)$,
then
 $g=g_{00}\,dx_0^2+\sum\nolimits_{i>0}g_{ii}\,(dx_i)^2$.
Denote by
$g_{ii,\mu}$ the derivative of $g_{ii}$ in the $\partial_\mu$-direction. We adopt the convention $\mu \in \{0 , \ldots , n \}$, $i,j \in \{1 , \ldots , n \}$.
We have
$g_{00}=\epsilon_N |g_{00}|$ and $g_{ii}=\epsilon_i |g_{ii}|$.

\begin{lem}\label{lem-biregG}
For a pseudo-Riemannian metric
in orthogonal
biregular foliated coordinates of a codimension-one foliation
on $(M,g)$, one has
\begin{eqnarray*}
 N \eq \partial_0/\sqrt{|g_{00}|}\quad\mbox{\rm(the unit normal)},\\
 \Gamma^j_{i0} \eq (1/2)\,\delta_i^j\,g_{ii,0}/g_{ii},\quad
 \Gamma^{i}_{00} = -(1/2)\,g_{00,i}/g_{ii},\quad
 \Gamma^{0}_{ij} = -\delta_{ij}\,g_{ii,0}/(2\,g_{00}),\\
 h_{ij}\eq
\Gamma^0_{ij}\sqrt{g_{00}}=
 -\frac12\,\epsilon_N\,\delta_{ij}\,g_{ii,0}/\sqrt{|g_{00}|}
 \quad\mbox{\rm(the second fundamental form)},\\
 A^j_{i}\eq
-\Gamma^j_{i0}/\sqrt{|g_{00}|}=
 -\frac{1}{2\,\sqrt{|g_{00}|}}\,\delta_{i}^{j}\,\frac{g_{ii,0}}{g_{ii}}
 \quad\mbox{\rm(the Weingarten operator)},\\
 \tau_1 \eq -\frac{1}{2\sqrt{|g_{00}|}}\sum\nolimits_{\,i>0}\,\frac{g_{i i,0}}{g_{i i}},\quad
 \tau_2=\frac{1}{4\,|g_{00}|}\sum\nolimits_{\,i>0}\,\Big(\frac{g_{i i,0}}{g_{i i}}\Big)^2,\quad \mbox{\rm etc}.
\end{eqnarray*}
Using the above, one can obtain
\begin{equation}\label{E-Qi}
  (\nabla_N\,h_{sc})_{ii} = -\frac{\epsilon_N}{2\,|g_{00}|}\,\big(g_{ii,00}
 - \frac12\,g_{ii,0}(\log|g_{00}|)_{,\,0} - (g_{ii,0})^2/g_{ii} \big).
\end{equation}
and, for $i = 1, \ldots, n$:
\begin{equation}
(\widetilde{\Div} A_N)(\partial_i) = \partial_i \big(- \frac{1}{2 \sqrt{ |g_{00} | }} \cdot \frac{g_{ii, \, 0}}{g_{ii}} \big) - \frac{1}{2 \sqrt{ |g_{00} | }} \cdot \frac{g_{ii, \, 0}}{g_{ii}} \cdot \sum_{a>0} \Gamma^{a}_{ai} + \frac{1}{2 \sqrt{ |g_{00} | }} \cdot \sum_{a>0} \Gamma^{a}_{ai} \frac{g_{aa, \, 0}}{g_{aa}},
\end{equation}
where
\begin{equation} \label{Gammaaai}
\Gamma^{a}_{ai} = \frac{1}{2} \cdot \frac{g_{aa, \, i}}{g_{aa}}.
\end{equation}
\end{lem}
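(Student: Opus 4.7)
The proof is entirely a coordinate computation, so the plan is to work through the items in the order they are listed, each step feeding into the next.

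First I would observe that in orthogonal biregular foliated coordinates the leaves $\{x_0=c\}$ have tangent spaces spanned by $\partial_1,\dots,\partial_n$, so $\partial_0$ is $g$-orthogonal to $T\calf$; since $g(\partial_0,\partial_0)=g_{00}=\epsilon_N|g_{00}|$, the unit normal is $N=\partial_0/\sqrt{|g_{00}|}$. Next I would plug the diagonal metric into the Koszul formula $\Gamma^k_{ij}=\tfrac12 g^{kk}(\partial_i g_{jk}+\partial_j g_{ik}-\partial_k g_{ij})$, using $g^{kk}=1/g_{kk}$, and read off the three nontrivial families of Christoffel symbols ($\Gamma^j_{i0}$, $\Gamma^i_{00}$, $\Gamma^0_{ij}$), as well as the auxiliary $\Gamma^a_{ai}$ appearing in~\eqref{Gammaaai}.

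Once the Christoffel symbols are in hand, the scalar second fundamental form is
\[
 h_{ij}=\epsilon_N g(\nabla_{\partial_i}\partial_j,N)=\epsilon_N g(\Gamma^0_{ij}\partial_0,N)=\Gamma^0_{ij}\sqrt{g_{00}},
\]
using $g(\partial_0,N)=\sqrt{g_{00}}$; substituting the value of $\Gamma^0_{ij}$ gives the stated formula for $h_{ij}$. For the Weingarten operator, $A_N(\partial_i)=-\nabla_{\partial_i}N$, and expanding $N=|g_{00}|^{-1/2}\partial_0$ with $\nabla_{\partial_i}\partial_0=\Gamma^j_{i0}\partial_j$ yields $A^j_i=-\Gamma^j_{i0}/\sqrt{|g_{00}|}$. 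The formulas for $\tau_1$ and $\tau_2$ follow by tracing: $\tau_1=\sum_i A^i_i$ and $\tau_2=\sum_i (A^i_i)^2$ (no cross terms, since $A$ is diagonal here).

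For \eqref{E-Qi}, I would compute $(\nabla_N h_{sc})_{ii}=N(h_{ii})-2h(\nabla_{\partial_i}N,\partial_i)$ carefully (using that $h_{sc}$ is $(0,2)$ and the diagonal structure of both $h$ and $A$). Writing $N=|g_{00}|^{-1/2}\partial_0$ produces the $\partial_0$-derivative of $h_{ii}$ plus a correction from differentiating $|g_{00}|^{-1/2}$, and the second term contributes $\tfrac12\epsilon_N (g_{ii,0})^2/(|g_{00}|\,g_{ii})$; collecting terms gives the displayed expression. Finally, for $(\widetilde{\Div}A_N)(\partial_i)$ I would use the standard coordinate formula for the divergence of a $(1,1)$-tensor restricted to the leaf, $(\widetilde{\Div}A_N)(\partial_i)=\sum_{a>0}(\partial_a(A^a_i)+\Gamma^a_{ab}A^b_i-\Gamma^b_{ai}A^a_b)$, specialize to diagonal $A$, and insert $\Gamma^a_{ai}=\tfrac12 g_{aa,i}/g_{aa}$ from the Christoffel computation; the diagonal structure collapses the sum to exactly the three terms written.

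The main obstacle is purely bookkeeping: keeping the sign factors $\epsilon_N$ and $\epsilon_i$ consistent in the pseudo-Riemannian setting, and not losing a factor when differentiating $|g_{00}|^{-1/2}$ in the computation of $(\nabla_N h_{sc})_{ii}$. No conceptual step is required beyond carefully applying the definitions in diagonal coordinates.
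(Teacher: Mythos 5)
Your computation is correct and is exactly the direct coordinate verification the paper intends (the lemma is stated without a written proof, being treated as a routine calculation): the Christoffel symbols from the diagonal metric, $h_{ij}$, $A^j_i$, $\tau_1$, $\tau_2$, formula \eqref{E-Qi} and the leafwise divergence of $A_N$ all come out as you describe. The only nitpick is that in $(\nabla_N h_{sc})_{ii}$ the correction term should formally be $-2h_{sc}(\nabla_N\partial_i,\partial_i)$ rather than $-2h_{sc}(\nabla_{\partial_i}N,\partial_i)$; since $\nabla_N\partial_i$ and $\nabla_{\partial_i}N$ differ only by a multiple of $\partial_0$, which $h_{sc}$ annihilates, your bookkeeping still yields the stated expression.
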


\begin{lem}
Let $\calf$ be a codimension-one foliation of a pseudo-Riemannian manifold $(M,g)$ tangent to $\widetilde{\cal D}$, with a unit normal field $N$ complete in a domain $\Omega$. Let there exist global orthogonal biregular foliated coordinates $(x_0,x_1,\ldots, x_n)$, with the leaves of $\calf$
given by $\{x_0=c\}$, and
 $g$ of the form
\begin{equation}\label{E-g1122}
 g_{ii}=\epsilon_i\,f_i(x_1,\ldots x_n)\,e^{\,-2\int \sqrt{|g_{00}|}\,y_i(t , x_1 , \ldots , x_n )\,{\rm d}\,t},\quad i=1 , \ldots,  n ,
\end{equation}
where $f_i\ (i=1,\ldots n)$ are positive functions.

Then \eqref{E-RicNs1F-2} can be written as the system of two equations
\begin{equation}\label{sigma2const}
\tau_1^2 - \tau_2 = \Ric_{N}(\Omega,g) - 2\,\hat C
\end{equation}
and
\begin{equation}\label{odetauconst}
\partial_0 y_i -\tau_{1} \sqrt{|g_{00} |}\,y_i  -\frac{1}{n}\,\hat C\,\sqrt{|g_{00} |} = 0,
\quad i=1,\ldots ,n
\end{equation}
where $\tau_1 = y_1 + \ldots + y_n$ is given by \eqref{E-tau-N}, $\tau_2 =  y_1^2 + \ldots + y_n^2 $, 
$\hat C \leq 0$ is as in Proposition \ref{L-trace-tau1} and $g_{00}$ is a smooth function of constant sign.

Also, \eqref{E-RicNs1F-3} takes the following form:
\begin{equation}\label{E-main-iii-bifoliated}
 \partial_i y_i + y_i\sum\nolimits_{\,a>0}\Gamma^{a}_{ai}
 -\sum\nolimits_{\,a>0} y_a\Gamma^{a}_{ai} -\partial_i \sum\nolimits_{\,a>0} y_a =0, \quad i=1, \ldots, n,
\end{equation}
which can be written equivalently as
\begin{equation}\label{E-main-iii-bifoliated2}
 \partial_i\sum\nolimits_{\,a>0,\,a\neq i} y_a +\sum\nolimits_{\,a>0,\,a\neq i}\Gamma^{a}_{ai}(y_i - y_a) = 0,
 \quad i=1, \ldots, n.
\end{equation}
\end{lem}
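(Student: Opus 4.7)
The strategy is a direct computation in biregular coordinates: substitute the ansatz \eqref{E-g1122} into the formulas of Lemma \ref{lem-biregG}, then rewrite each tensorial equation of Proposition \ref{L-trace-tau1} component-by-component. The key simplification provided by \eqref{E-g1122} is that
\[
 \frac{g_{ii,0}}{g_{ii}} = -2\sqrt{|g_{00}|}\,y_i,
\]
because the exponent in \eqref{E-g1122} has $\partial_0$-derivative equal to $-2\sqrt{|g_{00}|}\,y_i$. Plugging this into the expressions for $\tau_1$ and $\tau_2$ in Lemma \ref{lem-biregG} gives $\tau_1 = \sum_i y_i$ and $\tau_2 = \sum_i y_i^2$, so the first equation in \eqref{E-RicNs1F-2} becomes \eqref{sigma2const} verbatim, and the specific form \eqref{E-tau-N} of $\tau_1$ is inherited from Proposition \ref{L-trace-tau1}.

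Next I would derive \eqref{odetauconst} from the second equation in \eqref{E-RicNs1F-2}. The scalar second fundamental form in coordinates becomes $h_{ii} = \epsilon_N\,y_i\,g_{ii}$, while $\nabla_N \partial_i = -y_i \partial_i$ follows from $\Gamma^{j}_{i0} = \tfrac{1}{2}\delta_i^j g_{ii,0}/g_{ii}$ and $N = \partial_0/\sqrt{|g_{00}|}$. Computing
\[
 (\nabla_N h_{sc})(\partial_i,\partial_i) = N(h_{ii}) - 2\,h_{sc}(\nabla_N\partial_i,\partial_i),
\]
the $-2y_i^2 g_{ii}$-type contributions from the two terms on the right cancel, yielding the clean identity $(\nabla_N h_{sc})(\partial_i,\partial_i) = \epsilon_N g_{ii}\,(\partial_0 y_i)/\sqrt{|g_{00}|}$. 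Subtracting $\tau_1 h_{ii}$, setting the result equal to $\tfrac{\epsilon_N}{n}\,\hat C\,g_{ii}$, and multiplying by $\sqrt{|g_{00}|}$ gives \eqref{odetauconst}. (As a consistency check, summing \eqref{odetauconst} over $i$ reproduces the Riccati-type ODE for $\tau_1$ whose solution is \eqref{E-tau-N}.)

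For equation \eqref{E-main-iii-bifoliated} I would start from the formula for $(\widetilde{\Div}A_N)(\partial_i)$ given in Lemma \ref{lem-biregG}. The identities $-\tfrac{1}{2\sqrt{|g_{00}|}}\,g_{ii,0}/g_{ii} = y_i$ and the analogous one for the index $a$ convert the three terms of that formula directly into $\partial_i y_i + y_i\sum_{a>0}\Gamma^{a}_{ai} - \sum_{a>0}\Gamma^{a}_{ai}\,y_a$. Since $\partial_i$ is tangent to $\widetilde{\cal D}$, we have $g(\nabla^\top \tau_1,\partial_i) = \partial_i\tau_1 = \partial_i\sum_a y_a$, so pairing both sides with $\partial_i$ in \eqref{E-RicNs1F-3} produces \eqref{E-main-iii-bifoliated}. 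The equivalent form \eqref{E-main-iii-bifoliated2} then follows by observing that the $a=i$ contribution in $y_i \sum_a \Gamma^{a}_{ai} - \sum_a \Gamma^{a}_{ai}\,y_a$ vanishes and combining $\partial_i y_i - \partial_i\sum_a y_a = -\partial_i\sum_{a\neq i} y_a$.

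The computation is essentially linear algebra and bookkeeping; the main obstacle is sign discipline in the pseudo-Riemannian setting, where one must carefully separate $g_{00}$, $|g_{00}|$ and $\epsilon_N = \operatorname{sgn}(g_{00})$, and use that $g_{00}$ is of constant sign so that $\sqrt{|g_{00}|}$ is smooth and $(\log|g_{00}|)_{,0} = g_{00,0}/g_{00}$. A secondary subtlety is that the cancellation leading to $(\nabla_N h_{sc})(\partial_i,\partial_i) = \epsilon_N g_{ii}(\partial_0 y_i)/\sqrt{|g_{00}|}$ uses precisely the exponential form \eqref{E-g1122}; one should double-check this against the expression \eqref{E-Qi} to confirm both routes agree.
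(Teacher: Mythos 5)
Your proposal is correct and is exactly the computation the paper leaves implicit (its proof reads only ``straightforward computation, using \eqref{E-g1122} and Lemma \ref{lem-biregG}''): the relation $g_{ii,0}/g_{ii}=-2\sqrt{|g_{00}|}\,y_i$ does drive everything, your evaluation of $(\nabla_N h_{sc})_{ii}=\epsilon_N g_{ii}\,(\partial_0 y_i)/\sqrt{|g_{00}|}$ agrees with \eqref{E-Qi}, and the passage from \eqref{E-RicNs1F-2}--\eqref{E-RicNs1F-3} to \eqref{sigma2const}, \eqref{odetauconst} and \eqref{E-main-iii-bifoliated} checks out. One small caveat: carrying out your own last step literally yields $-\partial_i\sum_{a\neq i}y_a+\sum_{a\neq i}\Gamma^a_{ai}(y_i-y_a)=0$, which differs from the stated \eqref{E-main-iii-bifoliated2} by the sign of one term (apparently a typo in the paper), so ``then follows'' should be ``then follows up to that sign.''
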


\begin{proof}
This follows from a straightforward computation, using \eqref{E-g1122} and Lemma \ref{lem-biregG}.
\end{proof}

In \cite{rz-1}, under the assumption of the existence of a global biregular orthogonal coordinate system, the system of equations \eqref{E-RicNs1F-2} was fully solved for $n=2$ and examples of its solutions in higher dimensions were obtained - in particular, making $\widetilde{\cal D}$ minimal or totally umbilical. In general, metrics satisfying \eqref{E-RicNs1F-2} and \eqref{E-RicNs1F-3} are critical only for a particular set $\Omega$, since terms $(\Div\tilde H)(\Omega,g)$ and $\Ric_{N}(\Omega,g)$ explicitly appear in the Euler-Lagrange equations. However, for $\widetilde{\cal D}$ totally umbilical or minimal, and with constant principal curvatures, we can find metrics that are critical (with respect to both ${\bar g}^\perp$- and ${\bar g}^\top$-variations) for all sets $\Omega \subset M$.

\begin{prop}
Let $\widetilde{\cal D}$ be tangent to a codimension one foliation $\calf$ and let there exist global orthogonal biregular foliated coordinates $(x_0,x_1,\ldots, x_n)$, with the leaves of $\calf$
given by $\{x_0=c\}$.
Let ${\hat C} < 0$ and let $y = \pm \frac{1}{n} \sqrt{ | {\hat C} | }$. Then there exist metrics of the form \eqref{E-g1122} with $y_i = y$ for all $i = 1 , \ldots , n$ that are critical for the action \eqref{E-Jmix} with respect to both ${\bar g}^\perp$- and ${\bar g}^\top$-variations, for all $\Omega \subset M$.
\end{prop}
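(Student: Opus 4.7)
My plan is to substitute the ansatz $y_i=y$ (a constant in $x_0,x_1,\ldots,x_n$, with $y=\pm\tfrac1n\sqrt{|\hat C|}$) into the three scalar conditions from Proposition~\ref{L-trace-tau1}---namely \eqref{sigma2const}, \eqref{odetauconst} and \eqref{E-main-iii-bifoliated}---together with the underlying tensor equation $\nabla_N h_{sc}-\tau_1 h_{sc}=\tfrac{\epsilon_N}{n}\hat C\,\tilde g$. This ansatz realizes the totally umbilical, constant-principal-curvature setting flagged in the remark preceding the proposition, so it is the natural candidate. I will first verify the conditions that hold automatically, then reduce the remaining one to pointwise constancy constraints, and finally exhibit a concrete metric of the form \eqref{E-g1122} realizing those constraints.

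Because each $y_i$ equals the same constant $y$, every summand in \eqref{E-main-iii-bifoliated2} vanishes ($y_i-y_a=0$ and $\partial_i(\mathrm{const})=0$), so \eqref{E-main-iii-bifoliated} holds trivially. Equation \eqref{odetauconst}, with $\partial_0 y=0$ and $\tau_1=ny$, reduces to the algebraic identity $\hat C=-n^2 y^2$, which is exactly the hypothesis (since $\hat C<0$ gives $|\hat C|=-\hat C$). A direct computation using Lemma~\ref{lem-biregG} yields $h_{sc}=\epsilon_N y\,\tilde g$ and $\nabla_N h_{sc}=0$, so the tensor equation becomes $-\epsilon_N ny^2\,\tilde g=\tfrac{\epsilon_N}{n}\hat C\,\tilde g$, giving $\hat C=-n^2 y^2$ once again. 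For this to hold with $\hat C=(\Div\tilde H)(\Omega,g)$ for \emph{every} $\Omega$, one needs $\Div\tilde H\equiv -n^2 y^2$ pointwise on $M$. Combined with $\tau_1^2-\tau_2=n(n-1)y^2$, \eqref{sigma2const} then forces $\Ric_N\equiv -n(n+1)y^2$ pointwise on $M$; in the Riemannian case this second constraint is in fact automatic from $\Div\tilde H\equiv -n^2 y^2$ via the Raychaudhuri-type identity $\Ric_N=-ny^2+\Div\tilde H$ (applied with vanishing shear and rotation and acceleration $\nabla_N N=\tilde H$).

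The main obstacle, as I see it, is exhibiting an explicit metric of form \eqref{E-g1122} with $y_i=y$ constant and $\Div\tilde H\equiv -n^2 y^2$ pointwise. The formula $\tilde H=-\tfrac{\epsilon_N}{2|g_{00}|}\sum_i(g_{00,i}/g_{ii})\,\partial_i$ shows that $g_{00}$ must genuinely depend on the spatial coordinates, since otherwise $\tilde H\equiv 0$ and $\hat C=0$, contradicting $\hat C<0$. The intended construction is to choose $\sqrt{|g_{00}|}$ as a specific profile in $x_1,\ldots,x_n$ together with positive spatial factors $f_i$, and then to verify via Lemma~\ref{lem-biregG} that the resulting $\Div\tilde H$ equals the prescribed constant $-n^2 y^2$. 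With such an example in hand, the verifications in the previous paragraph show that all three Euler--Lagrange conditions then hold uniformly in $\Omega\subset M$, completing the proof.
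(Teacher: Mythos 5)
Your reductions are correct and follow the same route as the paper: with $y_i=y$ constant, \eqref{E-main-iii-bifoliated2} holds trivially, \eqref{odetauconst} collapses to $\hat C=-n^2y^2$, and the umbilical tensor equation together with \eqref{sigma2const} forces the single pointwise condition $\Ric_N\equiv-\frac{n+1}{n}|\hat C|$, which (via the identity $\Ric_N=N(\tau_1)-\tau_2+\Div(\nabla_N N)$, the paper's \eqref{eq-ran1}) is equivalent to $\Div(\nabla_N N)\equiv-|\hat C|$. Up to this point you and the paper agree.

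The genuine gap is that you stop exactly where the substantive work begins. The proposition is an existence statement, and the whole difficulty is producing a metric of the form \eqref{E-g1122} with all $y_i=y$ constant \emph{and} $\Div(\nabla_N N)\equiv-|\hat C|$ pointwise; your final paragraph only announces ``the intended construction is to choose $\sqrt{|g_{00}|}$ as a specific profile \dots and then to verify,'' without exhibiting any profile or proving one exists. Note that this is not a routine verification: by \eqref{E-g1122} the spatial components $g_{ii}$ depend on $g_{00}$ through the integral $\int\sqrt{|g_{00}|}\,y\,dt$, so the condition $\Div(\nabla_N N)=-|\hat C|$, written out via \eqref{divNN}, is a nonlinear second-order PDE coupling $g_{00}$ and the $f_i$. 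The paper resolves this by taking $n=2$, $f_a=1$, $g_{00}=w(x_1,x_2)T(x_0)$, showing the equation becomes $\Delta w+f(t,w)\,\<\nabla w,\nabla w\>=-|\hat C|$, and then using the substitution $u=\int\frac{{\rm d}w}{F(w)}$ with $F=e^{\int f\,{\rm d}w}$ to reduce it to Poisson's equation $\Delta u=-|\hat C|$, which has explicit solutions $u=u_0-\frac12|\hat C|(x_1^2+x_2^2)$ with $u_0$ harmonic. Without this (or some other) concrete construction, your argument establishes only that \emph{if} such a metric exists then it is critical for all $\Omega$, not that one exists.
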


\begin{proof}
By the assumption about $y$, \eqref{odetauconst} is satisfied and \eqref{sigma2const} takes the 
form
$\Ric_{N}(\Omega,g) = - \frac{n+1}{n} | {\hat C} |$. Hence, the metric is critical for all $\Omega$ if and only if it satisfies \eqref{E-RicNs1F-3} together with the following, point-wise condition:
\begin{equation} \label{RicNumb}
\Ric_{N} = - \frac{n+1}{n} | {\hat C} | .
\end{equation}

From \eqref{E-main-iii-bifoliated2} and $\partial_i y=0$ it follows that \eqref{E-RicNs1F-3} holds for any metric of the form \eqref{E-g1122}. We have an identity \cite{rz-1}
\begin{equation}\label{eq-ran1}
 \Ric_{N} = N(\tau_1)-\tau_2 +\Div(\nabla_{N}\,N),
\end{equation}
that in our case takes form
\begin{equation}
\Ric_{N} = \Div(\nabla_{N}\,N) - \frac{1}{n} | {\hat C} |,
\end{equation}
and from Lemma \ref{lem-biregG} we can obtain the following formula:
\begin{equation} \label{divNN}
 \Div(\nabla_NN) = \sum\nolimits_{\,i>0}\big(\epsilon_i g_{ii} Q_{i,i} +\frac12\big(\epsilon_N g_{00,i}
 +\sum\nolimits_{\,j>0} \epsilon_j\,g_{jj,i} \big)\,Q_i\big),\ {\rm where}\ Q_i=-\frac1{2|g_{00}|}\,\frac{g_{00,i}}{g_{ii}}.
\end{equation}
Hence, to solve \eqref{RicNumb}, we need to find a solution of
\begin{equation} \label{E-Z0}
\Div\nabla_NN = - | {\hat C} |
\end{equation}
of the form \eqref{E-g1122} with $y_i =y$ for all $i = 1 , \ldots , n$.

To show that solutions of \eqref{E-Z0} exist, we consider the case $n=2$ and for simplicity additionally assume that
\[
 f_a=1,\quad \epsilon_a=1\ \ (a=1,2),\quad
 \epsilon_N=1,\quad
 g_{00}=w(x_1,x_2) T( x_0 ),
\]
for some functions $w>0$ and $T>0$.
Then $g_{11}=g_{22}$ are functions of $t$ and $w$.
Hence, equation \eqref{E-Z0} yields an elliptic PDE (with parameter $t$) for $w$:
\begin{eqnarray*}
 && \Delta w +f(t,w)\,\<\nabla w,\nabla w\> =  - | {\hat C} | ,\quad {\rm where}\\
 && f(t,w)= \frac12\,T (t)\, {e}^{\sqrt{|\hat C|\,w}\int\!\sqrt{T(t)}\,{\rm d}t}
 +\frac{\sqrt{C}\int\!\sqrt{T(t)}\,{\rm d}t} {2\,{w^{1/2}}}\Big(1+\frac{1}{2\,w\,T(t)}\Big)
 -\frac1{w}\,.
\end{eqnarray*}
The substitution $u=\int\frac{{\rm d}w}{F(w)}$ with $F(w)=e^{\,\int f(w)\,{\rm d}w}$ leads to the
Poisson's equation
 $\Delta u = - | {\hat C} |$.
\newline
Hence, $u=u_0(x_1,x_2) - \frac12  | {\hat C} | (x_1^2+x_2^2)$, where $u_0$ is a harmonic function, yields a solution. This approach can be generalized to higher dimensions.
\end{proof}

\begin{prop}
Let $\widetilde{\cal D}$ be tangent to a codimension one foliation $\calf$ and let there exist global orthogonal biregular foliated coordinates $(x_0,x_1,\ldots, x_n)$, with the leaves of $\calf$
given by $\{x_0=c\}$.
Then for any set of constants $(y_i)_{1 \leq i \leq n}$ such that $y_1 + \ldots + y_n = 0$ there exists a metric critical for the action~\eqref{E-Jmix} with respect to both ${\bar g}^\perp$- and ${\bar g}^\top$-variations, for all sets $\Omega \subset M$, such that $\widetilde{\cal D}$ is minimal, with principle curvatures $(y_i)_{1 \leq i \leq n}$.
\end{prop}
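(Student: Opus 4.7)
The plan is to exhibit an explicit family of metrics of the form \eqref{E-g1122}, parametrised by positive constants $c_1,\ldots,c_n$, that satisfies the pointwise reductions of the Euler-Lagrange system. By Proposition~\ref{L-trace-tau1}, criticality with respect to both ${\bar g}^\perp$- and ${\bar g}^\top$-variations is equivalent to \eqref{E-RicNs1F-2}--\eqref{E-RicNs1F-3}; independence of $\Omega$ will be obtained by arranging $\Ric_N$ and $\Div\tilde H$ to be constants on $M$ coinciding with their global means.

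First, I would observe that with all $y_i$ constant and $\tau_1 = \sum_i y_i = 0$, equation \eqref{odetauconst} reduces to $\hat C\sqrt{|g_{00}|}=0$, forcing $\hat C = 0$. The system then collapses to the pointwise conditions $\Ric_N = -\tau_2$, $\nabla_N h_{sc} = 0$, and $(\widetilde\Div A_N)^\sharp = 0$ (the last being \eqref{E-main-iii-bifoliated2} combined with $\partial_i y_a = 0$).

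Second, I would propose the explicit metric
\begin{equation*}
 g \;=\; \epsilon_N\,dx_0^2 \;+\; \sum\nolimits_{i>0}\epsilon_i\,c_i\,e^{-2 y_i x_0}\,dx_i^2, \qquad c_i > 0,
\end{equation*}
which is the case $g_{00}\equiv\epsilon_N$, $f_i\equiv c_i$ of \eqref{E-g1122}. Lemma~\ref{lem-biregG} immediately gives principal curvatures $y_i$ and $\tau_1 = 0$; constancy of $g_{00}$ implies $\Gamma^i_{00} = 0$ and hence $\nabla_N N = 0$, so the identity \eqref{eq-ran1} yields $\Ric_N = -\tau_2$, a constant on $M$. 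Formula \eqref{E-Qi} shows $\nabla_N h_{sc} = 0$ because $g_{ii,00} = (g_{ii,0})^2/g_{ii}$ for $g_{ii}$ exponential in $x_0$, while constancy of each $f_a$ forces $\Gamma^a_{ai} = 0$ by \eqref{Gammaaai}, so \eqref{E-main-iii-bifoliated2} holds trivially.

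The main obstacle I anticipate is the conceptual first step---namely, arguing that criticality for \emph{all} $\Omega$ forces the mean-value quantities $\Ric_N(\Omega,g)$ and $(\Div\tilde H)(\Omega,g)$ to equal pointwise constants, and that $\hat C = 0$ is then the only consistent choice. For the proposed metric this is automatic thanks to $x_0$-translation invariance and the split form in the transverse coordinates, so no further compatibility constraint appears beyond the hypothesis $\sum_i y_i = 0$; the remainder of the verification is direct from Lemma~\ref{lem-biregG} and equations \eqref{E-Qi}, \eqref{Gammaaai}, \eqref{eq-ran1}.
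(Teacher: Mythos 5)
Your proposal is correct and follows essentially the same route as the paper: both work with metrics of the form \eqref{E-g1122} having constant $y_i$ with $\tau_1=0$, deduce $\hat C=0$, reduce \eqref{E-RicNs1F-2}--\eqref{E-RicNs1F-3} to pointwise identities via \eqref{eq-ran1}, and note that constancy of $\Ric_N=-\tau_2$ gives criticality for every $\Omega$. The only difference is that the paper keeps $g_{00}$ and the $f_i$ general (requiring $g_{00,i}=0$ and the orthogonality condition \eqref{E-main-iii-bifoliated2} on the $f_i$), whereas you specialize to constants, which satisfies that condition trivially and still suffices for the existence claim.
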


\begin{proof}
Let $g$ be of the form \eqref{E-g1122} with $y_i$ as assumed, and let $g_{00 , i} =0$ for $i \in {1 , \ldots , n}$. Then $0 = \Div \nabla_N N = {\hat C}$ and from \eqref{eq-ran1} it follows that \eqref{sigma2const} is satisfied as an identity. Equation \eqref{odetauconst} is also satisfied, because all its terms vanish. To finish the proof, take in \eqref{E-g1122} any set of positive functions $f_i$ such that \eqref{E-main-iii-bifoliated2} is satisfied. To see that it is possible, note that \eqref{E-main-iii-bifoliated2} reads that for all $i>0$, at every point of $M$, the vector $( \Gamma^1_{1i} , \ldots , \Gamma^n_{ni} )$ is orthogonal in $\RR^n$ to a constant vector $( y_1 - y_i , \ldots , y_n - y_i)$, and with the assumption $g_{00 , i} =0$, from \eqref{Gammaaai} we obtain $\Gamma^a_{ai} = \partial_i \log f_a$.
\end{proof}

The Euler-Lagrange equations for arbitrary volume preserving variations are more flexible and 
do not depend on $\Omega$. In the case of codimension one foliations, they admit a number of solutions.

\begin{prop} \label{codimonevolpreserving}
Let $\widetilde{\cal D}$ be tangent to a codimension-one foliation of a pseudo-Riemannian space $(M^{n+1},g)$, $n >1$, and let the unit normal field $N$ of $\calf$ be complete in a domain $\Omega$ of $M$. Then metric $g$ is critical for the action \eqref{E-Jmix} with respect to all volume preserving variations if and only if
\begin{eqnarray} \label{codim1folgenvar}
&& \nabla_N h_{sc} {-}\tau_1 h_{sc} + \frac{ \epsilon_N (\tau_1^{2}-\tau_2)}{n-1} g^\top =0 ,\\
 \label{E-RicNs1F-3gen}
&& (\widetilde{\Div} A_N)^{\sharp} - {\nabla}^\top \tau_1 = 0.
\end{eqnarray}
\end{prop}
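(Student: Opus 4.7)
The plan is to apply Remark \ref{remarkAllVariations} to the Euler-Lagrange equations \eqref{codimoneEL1}, \eqref{codimoneEL2} and \eqref{codimoneEL3} already derived in the codimension-one setting, and then to eliminate the auxiliary Lagrange-type function $\lambda$. By Remark \ref{remarkAllVariations}, replacing both $\Sm^*_{\rm mix}(\Omega,g)$ and $\widetilde{\Sm}^*_{\rm mix}(\Omega,g)$ by one and the same function $\lambda\in C^{\infty}(M)$ yields the system
\begin{align*}
&(\mathrm{a})\ \ \tau_1^{2}-\tau_2=-\epsilon_N\,\lambda,\\
&(\mathrm{b})\ \ \nabla_N h_{sc}-\tau_1 h_{sc}=\tfrac12\Big(2\epsilon_N(N(\tau_1)-\tau_1^{2})+\epsilon_N(\tau_1^{2}-\tau_2)-\lambda\Big)g^{\top},\\
&(\mathrm{c})\ \ (\widetilde{\Div} A_N)^{\sharp}-\nabla^{\top}\tau_1=0,
\end{align*}
as the Euler-Lagrange equations for arbitrary volume preserving variations. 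Equation $(\mathrm{c})$ is already \eqref{E-RicNs1F-3gen}, so the task reduces to showing that $(\mathrm{a})$ and $(\mathrm{b})$ together are equivalent to \eqref{codim1folgenvar}.

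The key step is to solve $(\mathrm{a})$ for $\lambda=-\epsilon_N(\tau_1^{2}-\tau_2)$ and substitute it into $(\mathrm{b})$, which collapses the right-hand side of $(\mathrm{b})$ to $\epsilon_N(N(\tau_1)-\tau_2)\,g^{\top}$ and produces the intermediate tensor identity
\[
(\star)\qquad \nabla_N h_{sc}-\tau_1 h_{sc}=\epsilon_N\bigl(N(\tau_1)-\tau_2\bigr)\,g^{\top}.
\]
This says that the left-hand side is a scalar multiple of $g^{\top}$. I would then take the $g^{\top}$-trace on both sides, using $\tr_{g^{\top}} g^{\top}=n$ and $\tr_{g^{\top}} h_{sc}=\epsilon_N\tau_1$, to obtain $\epsilon_N(N(\tau_1)-\tau_1^{2})=n\epsilon_N(N(\tau_1)-\tau_2)$; for $n>1$ this forces the scalar identity $N(\tau_1)=(n\tau_2-\tau_1^{2})/(n-1)$. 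Re-inserting this value into the scalar coefficient in $(\star)$ rewrites it as $-\epsilon_N(\tau_1^{2}-\tau_2)/(n-1)$, which is exactly \eqref{codim1folgenvar}.

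For the converse, I would start from \eqref{codim1folgenvar}. Taking its $g^{\top}$-trace again produces $N(\tau_1)=(n\tau_2-\tau_1^{2})/(n-1)$, so \eqref{codim1folgenvar} is equivalent to $(\star)$. Setting $\lambda:=-\epsilon_N(\tau_1^{2}-\tau_2)$ makes $(\mathrm{a})$ hold by definition, and substituting this $\lambda$ into $(\mathrm{b})$ reproduces $(\star)$; together with $(\mathrm{c})$, which is \eqref{E-RicNs1F-3gen}, this shows the converse implication via Remark \ref{remarkAllVariations}.

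The main obstacle is a bookkeeping one rather than a computational one: the two scalar coefficients $\epsilon_N(N(\tau_1)-\tau_2)$ and $-\epsilon_N(\tau_1^{2}-\tau_2)/(n-1)$ of $g^{\top}$ look genuinely different, and the equivalence between $(\star)$ and \eqref{codim1folgenvar} is not an algebraic identity in $\tau_1,\tau_2,N(\tau_1)$ but rather a consequence of the trace forced by the tensor equation itself. It is at exactly this point that the hypothesis $n>1$ enters: for $n=1$ the trace relation is vacuous and the two forms cannot be identified in the same way, which is precisely why the proposition excludes that degenerate case.
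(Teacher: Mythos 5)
Your proposal is correct and follows essentially the same route as the paper: invoke Remark \ref{remarkAllVariations} to replace the mean-value terms by a single arbitrary function $\lambda$, keep \eqref{codimoneEL2} unchanged, and then eliminate $\lambda$ between the trace equation and the tensor equation. The paper states the resulting simplification for $n\neq 1$ without detail, whereas you make explicit the trace argument that forces $N(\tau_1)=(n\tau_2-\tau_1^2)/(n-1)$ and hence identifies the scalar coefficient as $-\epsilon_N(\tau_1^2-\tau_2)/(n-1)$; this is exactly the computation the paper's phrase ``simplify to'' is hiding, and your converse direction (choosing $\lambda=-\epsilon_N(\tau_1^2-\tau_2)$) is also handled correctly.
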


\begin{proof}
For arbitrary volume preserving variations equation \eqref{codimoneEL2} remains unchanged, and the other Euler-Lagrange equations read:
\begin{eqnarray}\label{E-RicNs0Fgen0}
 &&\tau_1^{2}-\tau_2 = -\epsilon_{N} \lambda 
 ,\\
\label{E-RicNs1Fgen0}
 &&\hskip-13mm \nabla_N h_{sc} {-}\tau_1 h_{sc} = \frac12\big(2\,\epsilon_{N}(N(\tau_1){-}\tau_1^2)
 +\epsilon_{N}(\tau_1^{2}{-}\tau_2) - \lambda \big)\, g^\top
 ,
\end{eqnarray}
with $\lambda \in C^\infty(M)$, and for $n \neq 1$ simplify to
\begin{eqnarray}\label{E-RicNs0Fgen}
 &&\tau_1^{2}-\tau_2 = -\epsilon_{N} \lambda 
 ,\\
\label{E-RicNs1Fgen}
 &&\hskip-13mm \nabla_N h_{sc} {-}\tau_1 h_{sc} = \frac{1}{n-1} \lambda \, g^\top .
\end{eqnarray}
Since $\lambda$ is an arbitrary function, without any loss of generality the two equations above can be joined and written as \eqref{codim1folgenvar}.
\end{proof}

The proof of the following Lemma is a straightforward computation.

\begin{lem}
Let $g$ be of the form \eqref{E-g1122}. Then \eqref{codim1folgenvar} becomes system of the following equations:
\begin{equation} \label{codim1folgenvarg1122}
\partial_0 y_i =  \sqrt{ | g_{00} | } y_i ( \sum_{a=1}^n y_a ) - \frac{1}{n-1}\sqrt{ | g_{00} | } ( (\sum_{a=1}^n y_a)^2 - \sum_{a=1}^n y_a^2  ) , \quad i =1 , \ldots , n
\end{equation}
and \eqref{E-RicNs1F-3} takes the form \eqref{E-main-iii-bifoliated}, which is equivalent to \eqref{E-main-iii-bifoliated2}.
\end{lem}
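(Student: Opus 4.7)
The plan is to treat the two assertions separately. The second assertion, that \eqref{E-RicNs1F-3} becomes \eqref{E-main-iii-bifoliated}, is already contained in the previous lemma, so only a pointer is needed. The real work is converting the tensor equation \eqref{codim1folgenvar} into the system \eqref{codim1folgenvarg1122} for the functions $y_i$.

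First I would collect, at each point, the basic geometric data of a metric of the form \eqref{E-g1122}. By construction $g_{ii,0}/g_{ii} = -2\sqrt{|g_{00}|}\,y_i$, so from Lemma~\ref{lem-biregG} the Weingarten operator is diagonal with $A^j_i = \delta^j_i y_i$, whence $\tau_1 = \sum_a y_a$ and $\tau_2 = \sum_a y_a^2$. The same lemma gives that $h_{sc}$ is diagonal with $(h_{sc})_{ii} = \epsilon_N y_i g_{ii}$. In particular all three tensors appearing in \eqref{codim1folgenvar} are diagonal in the $\{\partial_1,\ldots,\partial_n\}$ basis, so the equation reduces to $n$ scalar equations indexed by $i$.

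The key calculation is the $(i,i)$-component of $\nabla_N h_{sc}$. Writing $a := \sqrt{|g_{00}|}$ and applying \eqref{E-Qi}, I would substitute the three pieces
\[
 g_{ii,0} = -2 a\,y_i\,g_{ii}, \qquad
 (g_{ii,0})^2/g_{ii} = 4a^2 y_i^2\,g_{ii}, \qquad
 g_{ii,00} = -2\,a_{,0}\,y_i\,g_{ii} - 2a\,(\partial_0 y_i)\,g_{ii} + 4a^2 y_i^2\,g_{ii}.
\]
Using $g_{00} = \epsilon_N a^2$ one has $(\log|g_{00}|)_{,0} = 2a_{,0}/a$, so the $\frac12 g_{ii,0}(\log|g_{00}|)_{,0}$ term produces exactly the $a_{,0} y_i g_{ii}$ piece needed to cancel the corresponding term in $g_{ii,00}$, while the $4a^2 y_i^2 g_{ii}$ pieces coming from $g_{ii,00}$ and from $(g_{ii,0})^2/g_{ii}$ also cancel. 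What remains is the clean identity
\[
 (\nabla_N h_{sc})_{ii} = \epsilon_N\,\frac{g_{ii}\,\partial_0 y_i}{\sqrt{|g_{00}|}}.
\]
This is the main obstacle: one must carry out the substitutions while tracking the signs $\epsilon_N$ hidden in $|g_{00}|$ versus $g_{00}$, and verify the two cancellations.

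With this identity in hand, the proof ends quickly. Inserting $(\nabla_N h_{sc})_{ii}$, $(h_{sc})_{ii} = \epsilon_N y_i g_{ii}$, $\tau_1 = \sum_a y_a$ and $g^\top_{ii} = g_{ii}$ into the $(i,i)$-component of \eqref{codim1folgenvar}, dividing through by $\epsilon_N g_{ii}$, and using $\tau_1^2 - \tau_2 = (\sum_a y_a)^2 - \sum_a y_a^2$ yields \eqref{codim1folgenvarg1122}. For the second assertion I would simply refer to the previous lemma, noting that its derivation of \eqref{E-main-iii-bifoliated} from \eqref{E-RicNs1F-3} used only the structural form \eqref{E-g1122} and the expressions for $\tau_1$ and $\Gamma^a_{ai}$ from Lemma~\ref{lem-biregG}, both of which still apply here.
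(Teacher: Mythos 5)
Your proposal is correct and is exactly the computation the paper has in mind: the paper's own proof consists of the single sentence that the lemma follows by ``a straightforward computation,'' and you have carried that computation out in full. In particular your identity $(\nabla_N h_{sc})_{ii} = \epsilon_N\,g_{ii}\,\partial_0 y_i/\sqrt{|g_{00}|}$ checks out against \eqref{E-Qi} with $g_{ii,0}/g_{ii}=-2\sqrt{|g_{00}|}\,y_i$, the two cancellations you describe do occur, and the deferral of the second assertion to the preceding lemma is legitimate since that lemma already derives \eqref{E-main-iii-bifoliated} from \eqref{E-RicNs1F-3} for any metric of the form \eqref{E-g1122}.
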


The above lemma indicates the existence of multiple critical metrics. For example, for $n=2$ and $g_{00} \equiv 1$ we have the following solution of \eqref{codim1folgenvarg1122}:
\begin{eqnarray*}
y_1 \eq - \sqrt{ c_1 } \coth ( \sqrt{ c_1 } x_0 + \sqrt{ c_1 } c_2 )   \\
y_2 \eq - \sqrt{ c_1 } \tanh ( \sqrt{ c_1 } x_0 + \sqrt{ c_1 } c_2 )   ,
\end{eqnarray*}
with $c_1$, $c_2$ being constants, or - if we want to consider also \eqref{E-RicNs1F-3} - functions independent on $x_0$ such that \eqref{E-main-iii-bifoliated2} is satisfied.

Using Proposition \ref{codimonevolpreserving}, we can fully characterize metrics with totally umbilical or minimal $\widetilde{\cal D}$, that are critical for \eqref{E-Jmix} with respect to all volume preserving variations.

\begin{cor}
Let $g$ be a metric on $M^{n+1}$, with $n>1$, critical for the action \eqref{E-Jmix} with respect to all volume preserving variations and let $\widetilde{\cal D}$ be minimal and tangent to a codimension one foliation. Then $\widetilde{\cal D}$ is totally geodesic.
\end{cor}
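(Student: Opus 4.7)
The plan is to apply Proposition~\ref{codimonevolpreserving} and use a trace argument on the first Euler-Lagrange equation to force the full second fundamental form to vanish.

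First I would substitute the minimality hypothesis $\tau_1 \equiv 0$ into the Euler-Lagrange equation \eqref{codim1folgenvar} and obtain
\begin{equation*}
 \nabla_N h_{sc} \;=\; \frac{\epsilon_N\,\tau_2}{n-1}\,g^\top .
\end{equation*}
Next, I would take the $g^\top$-trace of both sides. Using an adapted orthonormal $\widetilde{\cal D}$-frame $\{E_a\}$ that is parallel along the $N$-flow at the chosen point (so the commutation with $\nabla_N$ is free), the left-hand side trace becomes $N\!\bigl(\sum_a \epsilon_a\,h_{sc}(E_a,E_a)\bigr) = N(\epsilon_N\,\tau_1) = 0$ by minimality, while the right-hand side trace equals $\tfrac{n\,\epsilon_N}{n-1}\,\tau_2$. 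Since $n>1$, this yields $\tau_2 = 0$ identically on $M$.

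From here the conclusion follows from the algebraic identity $\tau_2 = \sum_a \lambda_a^{\,2}$, where $\lambda_a$ are the (real) eigenvalues of $A_N$: this uses that $A_N$ is $g^\top$-self-adjoint and $g^\top$ is definite on the leaves, which holds in the Riemannian setting. Thus every $\lambda_a = 0$, i.e.\ $A_N \equiv 0$, so $h_{sc} \equiv 0$ and $\widetilde{\cal D}$ is totally geodesic; the second Euler-Lagrange equation \eqref{E-RicNs1F-3gen} is then automatically satisfied since $\tau_1 = 0$ and $A_N = 0$, so no further verification is needed.

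The main obstacle, and the reason the hypothesis $n>1$ enters, is the division by $n-1$ in \eqref{codim1folgenvar}: for $n=1$ the trace identity collapses and one cannot extract a pointwise constraint on $\tau_2$. A secondary (technical) point is that the passage from $\tau_2=0$ to $A_N=0$ uses diagonalizability of $A_N$, which is automatic only when $g^\top$ is positive definite; in the genuinely indefinite pseudo-Riemannian case one would additionally need to invoke the Riccati-type identity relating $\nabla_N A_N$, $A_N^{\,2}$ and the Jacobi operator $R_N$, combined with $\widetilde{\Div}\,A_N = 0$, to exclude nilpotent $A_N$.
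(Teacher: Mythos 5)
Your proof is correct and follows essentially the same route as the paper, whose entire argument is ``assuming $\tau_1=0$ and tracing \eqref{codim1folgenvar}, we obtain $\tau_2=0$''; your trace computation and the passage from $\tau_2=0$ to $A_N=0$ fill in exactly the details the paper leaves implicit. Your closing remark about diagonalizability of $A_N$ when $g^\top$ is indefinite identifies a genuine subtlety that the paper's one-line proof silently glosses over, so flagging it (and sketching the Riccati-type fix) is a point in your favour rather than a defect.
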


\begin{proof}
Assuming $\tau_1 =0$ and tracing \eqref{codim1folgenvar}, we obtain $\tau_2 =0$.
\end{proof}

\begin{cor}
Let $g$ be a metric on $M^{n+1}$, with $n>1$, critical for the action \eqref{E-Jmix} with respect to all volume preserving variations and let $\widetilde{\cal D}$ be totally umbilical, tangent to a codimension one foliation, with unit normal field $N$. 
Then $\tau_1$ is constant on $M$.
\end{cor}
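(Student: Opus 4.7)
The plan is to substitute the total-umbilicity identities into the two Euler--Lagrange equations \eqref{codim1folgenvar} and \eqref{E-RicNs1F-3gen} of Proposition~\ref{codimonevolpreserving}, and observe that each of them forces one component of $\nabla\tau_1$ to vanish. Total umbilicity of $\widetilde{\cal D}$ means $A_N = (\tau_1/n)\,\mathrm{Id}$ on $\widetilde{\cal D}$ (equivalently $h_{sc} = (\epsilon_N\tau_1/n)\,g^\top$), from which $\tau_2 = \tau_1^2/n$ and hence the umbilicity identity $\tau_1^2 - \tau_2 = (n-1)\tau_1^2/n$ follow at once.

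First I would take the $g^\top$-trace of \eqref{codim1folgenvar}. Using $\tr_{g^\top} h_{sc} = \epsilon_N\tau_1$, $\tr_{g^\top} g^\top = n$, and the standard identity $\tr_{g}(\nabla_N T) = N(\tr_{g} T)$ applied to $T = h_{sc}$ (which vanishes off $\widetilde{\cal D}\times\widetilde{\cal D}$, so the $g$-trace agrees with the $g^\top$-trace), the traced equation becomes
\[
 \epsilon_N N(\tau_1) \;-\; \epsilon_N \tau_1^2 \;+\; \frac{n\,\epsilon_N(\tau_1^2-\tau_2)}{n-1} \;=\; 0.
\]
The umbilicity identity makes the last two terms cancel exactly, leaving $N(\tau_1) = 0$: $\tau_1$ is constant along the $N$-flow.

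Next I would insert $A_N = (\tau_1/n)\,\mathrm{Id}$ into \eqref{E-RicNs1F-3gen}. A direct computation in a $\widetilde{\cal D}$-adapted orthonormal frame $\{E_a\}$, using $(\nabla_{E_a} A_N)(X) = \nabla_{E_a}(A_N X) - A_N(\nabla_{E_a} X)$ and the fact that $A_N$ kills the $\cal D$-component of $\nabla_{E_a} X$, gives $(\widetilde{\Div} A_N)(X) = (1/n)\,X(\tau_1)$ for every $X\in\widetilde{\cal D}$. Therefore $(\widetilde{\Div} A_N)^\sharp = (1/n)\,\nabla^\top\tau_1$, and \eqref{E-RicNs1F-3gen} reduces to $\tfrac{1-n}{n}\,\nabla^\top\tau_1 = 0$. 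Since $n>1$, this forces $\nabla^\top\tau_1 = 0$.

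Combining $N(\tau_1)=0$ with $\nabla^\top\tau_1 = 0$ yields $\nabla\tau_1 = 0$, so $\tau_1$ is constant on $M$ (which is connected by hypothesis). The argument is essentially algebraic substitution; the only step requiring care is the exact cancellation in the traced equation, which works precisely because of the umbilicity identity $\tau_1^2 - \tau_2 = (n-1)\tau_1^2/n$, and the dimension assumption $n>1$ enters only at the very end, to divide by $1-n$ when concluding from \eqref{E-RicNs1F-3gen}.
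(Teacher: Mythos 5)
Your proof is correct and follows essentially the same route as the paper: substitute the umbilicity identities into \eqref{codim1folgenvar} and \eqref{E-RicNs1F-3gen} to get $N(\tau_1)=0$ and $(n-1)\nabla^\top\tau_1=0$ respectively, then use $n>1$ and connectedness of $M$. The paper merely writes $\tau_1=ny$ and states the same two conclusions without the intermediate computations you supply.
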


\begin{proof}
Let $\tau_1 = ny$.
From \eqref{codim1folgenvar} we obtain $N(y)=0$. For totally umbilical foliations \eqref{E-RicNs1F-3gen} reduces to
\[
(n-1)\,  \nabla^\top y = 0
\]
and the claim follows.
\end{proof}

\textbf{Acknowledgments}.
This paper has been written during the second author's stay at a postdoctoral fellowship in Department of Mathematics at University of Haifa.
The second author would like to thank the first author and the University of Haifa, for their hospitality.

\end{document}